\newtheorem{theorem}{Theorem}[section]
\newtheorem{lemma}[theorem]{Lemma}
\newtheorem{proposition}[theorem]{Proposition}
\theoremstyle{definition}
\newtheorem{definition}[theorem]{Definition}
\newtheorem{remark}{Remark}
\numberwithin{equation}{section}
\def\O{{\Omega}}
\def\o{{\omega}}
\def\eps{{\varepsilon}}
\def\l{{\mathcal{L}}}
\def\m{{\mathcal{M}}}
\def\s{{\mathcal{S}}}
\def\E{{\mathcal{E}}}
\def\L{{\mathcal{L}}}
\def\M{{\mathcal{M}}}
\def\R{{\mathbb{R}}}
\def\N{{\mathbb{N}}}
\newcommand{\ope}[1]{\E[{#1}]}
\newcommand{\lb}[1]{\L_{_{#1}}}
\newcommand{\oplb}[2]{\L_{_{#2}}[{#1}]}
\newcommand{\oplr}[2]{\L_{_{#2}}[{#1}]}
\newcommand{\opm}[1]{\M[{#1}]}
\newcommand{\opmem}[2]{\M_{_{\eps,{#2}}}[{#1}]}
\newcommand{\nlp}[3]{\|{#1}\|_{L^{#2}{(#3)}}}
\newcommand{\nlto}[1]{\|{#1}\|_{2}}
\newcommand{\transposee}[1]{{\vphantom{#1}}^{\mathit t}{#1}}
\newenvironment{formula}[1]{\begin{equation}\label{eq:#1}}
                       {\end{equation}\noindent}
\def\Fi#1{\begin{formula}{#1}}
\def\Ff{\end{formula}\noindent}
\title{Persistence criteria for populations with  non-local  dispersion 
}
\author{Henri Berestycki \thanks{
CAMS - \'Ecole des Hautes \'Etudes en Sciences Sociales,
190-198 avenue de France, 75013, Paris, France, {\itshape email:
}\ttfamily hb@ehess.fr} , J\'er\^ome Coville \thanks{
UR 546 Biostatistique et Processus  Spatiaux, INRA, Domaine St Paul Site Agroparc, F-84000 Avignon, France, {\itshape email:
}\ttfamily jerome.coville@avignon.fr} , Hoang-Hung Vo \thanks{CAMS - \'Ecole des Hautes \'Etudes en Sciences Sociales,
190-198 avenue de France, 75013, Paris, France, {\itshape email:
}\ttfamily hhv@ann.jussieu.fr}}
\begin{document}

\maketitle
\begin{abstract}
In this article,  we analyse  the non-local   model :
$$
 \frac{\partial u}{\partial t}=J\star u -u + f(x,u) \quad \text{ with }\quad x \in \R^N,
$$
where $J$ is a positive continuous dispersal kernel and $f(x,u)$ is a heterogeneous KPP type non-linearity describing the growth rate of the population.  The ecological niche of the population is assumed to be bounded (i.e. outside a compact set, the environment is assumed to be lethal for the population). For compactly supported dispersal kernels $J$, we derive an optimal persistence criteria.  We prove that  a positive stationary solution exists if and only if the generalised principal eigenvalue $\lambda_p$  of the linear problem
$$ J\star \varphi(x) -\varphi(x) + \partial_sf(x,0)\varphi(x)+\lambda_p\varphi(x)=0 \quad \text{ in }\quad \R^N,$$
 is negative. $\lambda_p$ is a spectral quantity that we defined in the spirit of the generalised first eigenvalue of an elliptic operator. In addition,  for any continuous  non-negative initial data that is bounded or integrable, we establish the long time behaviour of the solution $u(t,x)$. We also analyse the impact of the size of the support  of the dispersal kernel on the persistence criteria. We exhibit situations where the dispersal strategy has "no impact" on the persistence of the species and other ones  where the slowest dispersal strategy is not any more an "Ecological Stable Strategy". We also discuss  persistence criteria for fat-tailed kernels. 
\end{abstract}
\tableofcontents
\section{Introduction}
In this article, we are interested in finding persistence criteria for a species that has a long range dispersal strategy.  For such a model species, we can think of trees of which seeds and pollens are disseminated on a wide range.  
The possibility of a long range dispersal is well known in ecology, where numerous data now available support this assumptions \cite{Cain2000,Clark1998,Clark1998a,Schurr2008}. A commonly used model that integrates such long range dispersal  is the following {\em nonlocal reaction diffusion equation} (\cite{Fife1996,Grinfeld2005,Hutson2003,Lutscher2005,Turchin1998}): 
 \begin{equation}
 \frac{\partial u}{\partial t}(t,x)=J\star u(t,x) -u + f(x,u(t,x)) \quad \text{ in }\quad \R^+\times \R^N. \label{bcv-eq-dyn}
 \end{equation}
Here $u(t,x)$ is  the density of the considered population, $J$ is a dispersal kernel, $f(x,s)$ is a KPP type non-linearity describing 
the growth rate of the population.  

In this setting the tail of the kernel can be thought of as the range of dispersion or as  a measure of  the frequency at which  long dispersal events occur. A biological  motivation  for the use of \eqref{bcv-eq} to describe the evolution of the population comes from the observation  that the  intrinsic variability in the capacity of the individuals to disperse generates,  at the scale of a population, a long range dispersal of the population. The effect of such variability has been investigated in \cite{Hapca2009,Petrovskii2011}  by means of correlated random walks. In such a framework,  each individual moves according to  classical random walks, however  the diffusion coefficients is given by  a probability law. It can be checked (\cite{Hapca2009,Petrovskii2011,Turchin1998}) that  the density of population will then satisfy an integro-differential equation  where the dispersal kernel $J$ describes the probability to jump from one location to another.      

Throughout this paper we will always make the following assumptions on the dispersal kernel $J$.

\textbf{(H1)}  $J \in C(\R^N)\cap L^1(\R^N)$ is nonnegative,   symmetric and of unit mass (i.e. $\int_{\R^N}J(z)dz=1$) . 

\textbf{(H2)} $J(0)>0$ 

In the present paper,  we focus our analysis on species that have a bounded ecological niche. A simple way to model  such a
spatial repartition consists in considering  that the environment is hostile to the species outside a bounded set. For instance, biological populations that are sensitive to temperature thrive only in a limited latitude zone. Thus, if $x$ is the latitude, we get such dependence.   
This fact is translated in our model  by assuming  that $f$ satisfies:

\textbf{(H3)} $f \in C^{1,\alpha}(\R^{N+1})$  is of KPP type, that is :
$$\left\{
\begin{aligned}
& \hbox{$f(\cdot,0)\equiv0$},
\\
& \hbox{For all $x\in \R^N,f(x,s)/s$ is  decreasing  with
respect to $s$ on $(0,+\infty)$}.
\\
& \hbox{There exists $S(x)\in C(\R^N)\cap L^{\infty}(\R^N)$ such that $f(x,S(x))\leq 0$ 
for all $ x\in \R^N$.}
\end{aligned}
\right.$$
 
 
  \textbf{(H4)} \qquad
  \hbox{ $\limsup_{|x|\to \infty} \frac{f(x,s)}{s}<0,$\quad \text{ uniformly in } \quad $s\ge 0.$} 
   
\medskip  
 
A typical example of such a nonlinearity is given by $f(x,s):=s(a(x) - b(x)s)$ with $b(x)>0$ and $a(x)$ satisfies
$\limsup_{|x|\to \infty} a(x)<0$.

 Our main purpose is to find conditions on $J$ and $f$ that characterise the persistence of the species modelled by \eqref{bcv-eq-dyn}. In this task,  we focus our analysis on the description of the set of positive stationary solutions of \eqref{bcv-eq-dyn}, that is the  positive solutions of the equation below 
\begin{equation}
 J\star u(x) -u(x) + f(x,u(x))=0 \quad \text{ in }\quad  \R^N. \label{bcv-eq}
 \end{equation}
Existence of stationary solutions  is naturally expected to provide the right persistence criterion. We will see that this is indeed the case.    

In the literature,  persistence criteria  have been well studied for the local reaction diffusion version of \eqref{bcv-eq-dyn} 
 \begin{equation}\label{bcv-eq-rd}
 \frac{\partial u}{\partial t}(t,x)=\Delta u(t,x) + f(x,t,u(t,x)) \quad \text{ in }\quad \R^+\times \O, 
 \end{equation}
 where $\O$ is a domain of $\R^N$, possibly $\R^N$ itself.
Persistence criteria have been obtained for various media, ranging from periodic media to ergodic media \cite{Berestycki2005,Berestycki2007,Berestycki2008,Cantrell1991,Cantrell1998,Nadin2012,Pinchover1988,Shen2004}. In the context of global warming, persistence criteria have been investigated in  \cite{Berestycki2008,Berestycki2009,Berestycki2009b}.
For such reaction diffusion equations the persistence criteria are often derived from the sign of the first eigenvalue of the linearised problem  at the 0 solution. One is thus led to determine the sign of the first eigenvalue  $\lambda_1(\Delta+ \partial_sf(x,0),\O)$ of the spectral problem 
\begin{equation}\label{bcv-eq-rd-lin0}
\Delta \varphi(x) +\partial_sf(x,0)\phi(x) +\lambda_1 \varphi(x)=0 \quad \text{ in }\quad \O 
 \end{equation}
 associated with the proper boundary conditions (if $\Omega\not=\R^N$).
 
In most situations, for KPP-- like non-linearities, the existence of a positive stationary solution to \eqref{bcv-eq-rd} is indeed uniquely conditioned by the sign of $\lambda_1$. More precisely, there exists  a unique positive stationary solution if and only if  $\lambda_1<0$. If such type of criteria seems reasonable for problems defined on bounded set, it is less obvious for problems in unbounded domains.  In particular, in unbounded domains, one of the main difficulty concerns the definition of $\lambda_1$.  As shown in \cite{Berestycki2006,Berestycki2007,Berestycki2010},  the notion of first eigenvalue in unbounded domain can be quite delicate and several definitions of $\lambda_1$ exist rendering the question of  sharp persistence criteria quite involved.

Much less is known for the non-local equation \eqref{bcv-eq}  and, to our knowledge persistence criteria have been essentially investigated 
in some specific situations such as periodic media : \cite{Coville2008b,Coville2013,Shen2012} or for a version of the problem \eqref{bcv-eq} defined in a bounded domain $\O$ \cite{Bates2007,Coville2010,2013arXiv1305.7122C,Garcia-Melian2009,Kao2010,Shen2012} : 
\begin{equation}\label{bcv-eq-bounded}
 \frac{\partial u}{\partial t}(t,x)=\int_{\O}J(x-y)u(t,y)\,dy -u(t,x) + f(x,u(t,x)) \quad \text{ in }\quad \R^+\times \O. 
 \end{equation}
   
  We also quote \cite{Berestycki2011} for an analysis of a persistence criteria in periodic media for a non-local version of \eqref{bcv-eq-rd} involving a fractional diffusion and \cite{Rawal2012} for persistence criteria  in time periodic versions of \eqref{bcv-eq-bounded} .    
Similarly to the local diffusion case, for $KPP$ like non-linearities, the existence of a positive  solution of the non-local equation \eqref{bcv-eq-bounded} can be characterised  by the sign of a spectral quantity $\lambda_p$, called  the generalised principal eigenvalue of 
\begin{equation}\label{bcv-eq-lin0}
\int_{\O}J(x-y) \phi(y) \, dy - \phi(x) +\partial_sf(x,0)\phi(x) +\lambda \phi(x)=0 \quad \text{in}\quad\O. 
 \end{equation}

In the spirit of \cite{Berestycki1994}, this generalised principal eigenvalue $\lambda_p$ is defined by :
$$\lambda_p:=\sup\left\{\lambda\in\R\, |\, \exists \varphi \in C(\O), \varphi>0,\; \text{ such that }\;\oplb{\varphi}{\O}(x) - \varphi(x) +\partial_sf(x,0)\varphi(x) +\lambda \varphi(x)\le 0 \quad \text{in}\quad \O. \right\},$$ 
where $\oplb{\varphi}{\O}(x)$ denotes  
$$\oplb{\varphi}{\O}(x):= \int_{\O}J(x-y)\varphi(y)\,dy.$$
  
 Unlike the elliptic PDE case, due to the lack of a regularising effect of the diffusion operator,  the above spectral problem may not have a solution in  spaces of functions like $L^p(\O), C(\O)$\cite{Coville2008b,Coville2013a,Kao2010}. As a consequence, even in a bounded domain, simple sharp persistence criteria are  quite delicate. Another difficulty inherent to the study of nonlocal equations  \eqref{bcv-eq-lin0} in unbounded domain concerns the lack  of natural \textit{ a priori } estimates for the solution thus making standard approximations difficult to use in most cases.

\subsection{Main Results:}
Let us now state our main results. In the first one we establish a simple sharp persistence criteria assuming that the dispersal kernel $J$ has compact support.    

\begin{theorem}\label{bcv-thm1}
Assume that $J,f$ satisfy (H1-H4) and assume further that $J$ is compactly supported.  Then,  there exists a  positive solution, $\tilde u$, of \eqref{bcv-eq} if and only if $\lambda_p(\M+\partial_sf(x,0))<0$, where $\M$ denotes the continuous operator $\opm{\varphi}=J\star \varphi(x)-\varphi(x)$ and

$$\lambda_p(\M+\partial_sf(x,0)):=\sup \{\lambda\in \R\,|\, \exists \varphi \in C(\R^N), \phi>0\; \text{ so that }\; \opm{\varphi}+\partial_sf(x,0)\varphi+\lambda\varphi\le 0\}.$$
When it exits, the solution is unique, that is, 
if $v$ is another bounded solution, then $\tilde u=v$ almost everywhere. Moreover, for any non-negative initial data $u_0 \in C(\R^N)\cap L^{\infty}(\R^N)$ we have the following asymptotic behaviour:
\begin{itemize}
\item If $\lambda_p(\M+\partial_sf(x,0))\ge 0$, then  the solution satisfies $\|u(t)\|_{\infty}\to 0$ as $t\to \infty$,
\item If $\lambda_p(\M+\partial_sf(x,0))< 0$, then  the solution satisfies $\|(u- \tilde u)(t)\|_{\infty}\to 0$ as $t\to \infty$.
\end{itemize}
In addition, if the initial data is such that $u_0 \in C(\R^N)\cap L^{1}(\R^N)$, then  the convergence $u(t,x)\to \tilde u$ holds in $L^1(\R^N)$. 
  
\end{theorem}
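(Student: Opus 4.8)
The plan is to characterize existence of a positive stationary solution of \eqref{bcv-eq} through the sign of $\lambda_p := \lambda_p(\M+\partial_sf(x,0))$, then bootstrap the stationary analysis into the dynamical statements via comparison arguments. I would organize the argument around the generalised principal eigenvalue and its approximation by principal eigenvalues on large balls.

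\textbf{Step 1: Finite-domain approximation of $\lambda_p$.} First I would introduce, for each $R>0$, the Dirichlet-type operator $\oplb{\cdot}{B_R}$ on the ball $B_R$ and its principal eigenvalue $\lambda_p(\M_{B_R}+\partial_sf(x,0),B_R)$ associated with \eqref{bcv-eq-lin0} (taking $f$ lethal, i.e. extended suitably outside $B_R$). Using (H2) (so $J(0)>0$, guaranteeing irreducibility of the kernel operator on a ball) together with known results on nonlocal operators in bounded domains, these finite-ball eigenvalues are achieved by a positive principal eigenfunction $\varphi_R$. The key monotonicity fact is that $R\mapsto \lambda_p(\M_{B_R}+\partial_sf(x,0),B_R)$ is nonincreasing, so it has a limit $\lambda_\infty \ge \lambda_p$ as $R\to\infty$; the harder inclusion is $\lambda_\infty \le \lambda_p$, which follows by using a test function in the definition of $\lambda_p$ built from the $\varphi_R$ — here (H4) is essential, as it forces the eigenfunctions to decay and makes the supremum in the definition of $\lambda_p$ attained in the appropriate generalized sense. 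I would then show $\lambda_p = \lim_{R\to\infty}\lambda_p(\M_{B_R}+\partial_sf(x,0),B_R)$.

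\textbf{Step 2: Existence when $\lambda_p<0$.} If $\lambda_p<0$, then for $R$ large $\lambda_p(\M_{B_R}+\partial_sf(x,0),B_R)<0$, so the truncated KPP problem on $B_R$ admits a unique positive solution $u_R$ (by the sub/supersolution method: $\epsilon\varphi_R$ is a subsolution for small $\epsilon$, and $\max_x S(x)$ from (H3) is a supersolution). The family $\{u_R\}$ is monotone nondecreasing in $R$ and bounded above by $\|S\|_{L^\infty}$, hence converges pointwise to some $\tilde u \ge 0$. Passing to the limit in the integral equation (dominated convergence, using compact support of $J$ so that $J\star u_R \to J\star\tilde u$ locally) shows $\tilde u$ solves \eqref{bcv-eq}; positivity of $\tilde u$ comes from $\tilde u \ge u_{R_0} > 0$ on $B_{R_0}$ and then a sweeping argument using (H2) and the equation to propagate positivity to all of $\R^N$.

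\textbf{Step 3: Nonexistence when $\lambda_p\ge 0$.} Suppose a positive bounded solution $v$ exists. The KPP hypothesis (H3) ($f(x,s)/s$ decreasing) gives $\M[v] + \partial_s f(x,0) v \ge \M[v] + f(x,v)/v \cdot v = 0$ pointwise... more precisely $f(x,v) \le \partial_s f(x,0) v$, so $v$ is a positive supersolution of the linearized equation, which by the definition of $\lambda_p$ forces $\lambda_p \ge 0$ only in the wrong direction — so I would instead test more carefully: $v$ being a positive solution of $\M[v]+\partial_s f(x,0)v = -(\partial_sf(x,0)v - f(x,v)) \le 0$ shows $\lambda = 0$ is admissible in the sup defining $\lambda_p$, giving $\lambda_p\ge 0$; conversely if $\lambda_p > 0$ strictly I would derive a contradiction by comparing $v$ with the decaying eigenfunctions $\varphi_R$ on large balls and sliding. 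The delicate borderline case $\lambda_p=0$ with a solution present needs the strict monotonicity in (H3) to rule out via a linearization/strong maximum principle argument.

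\textbf{Step 4: Uniqueness and long-time behaviour.} Uniqueness of the bounded positive stationary solution follows from a standard sliding argument exploiting the strict concavity encoded in (H3): if $\tilde u, v$ are two solutions, compare $t\tilde u$ with $v$ and increase $t$ to the critical value, using the equation and (H2) to rule out interior contact unless $t\le 1$, and symmetrically. For the dynamics, I would construct ordered sub/supersolutions of the parabolic equation \eqref{bcv-eq-dyn}: when $\lambda_p<0$, squeeze $u(t,\cdot)$ between $u_R(t)$ (solution with truncated niche, converging to $u_R$) from below eventually and $\|S\|_{L^\infty}e^{-\delta t}$-type barriers from above, then let $R\to\infty$; when $\lambda_p\ge 0$, use $\bar\varphi e^{-\lambda_p t/2}$ (with $\bar\varphi$ a suitable generalized eigenfunction or a supersolution coming from Step 1) as a supersolution to force $\|u(t)\|_\infty \to 0$. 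The $L^1$-convergence for integrable data uses the contraction property of the semigroup in $L^1$ (mass is not created, since $\int J\star u - u = 0$) together with the $L^\infty$-convergence and an equi-integrability/tightness argument powered by (H4), which controls the mass escaping to spatial infinity.

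\textbf{Main obstacle.} I expect the crux to be Step 1 — establishing that $\lambda_p$ is exactly the limit of the finite-ball principal eigenvalues, and in particular that a useful (generalized, possibly non-continuous or merely super-solution-type) positive eigenfunction exists at $\lambda = \lambda_p$. Nonlocal operators lack regularizing effects, so the principal eigenvalue need not be attained in $C(\R^N)$ or $L^p$; the compact support of $J$ and hypothesis (H4) are exactly what is needed to tame this, but combining them to get sharp control (rather than an inequality) at the threshold, and then leveraging it in both the existence borderline ($\lambda_p = 0$) and the sharp dynamical dichotomy, is where the real work lies.
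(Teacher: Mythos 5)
Your proposal follows the same broad architecture as the paper (approximate on large balls, pass to the limit, squeeze the parabolic flow), but there are several genuine gaps, and the one you flag as the crux is not where the real difficulty lies.

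\textbf{Step 3 has both a sign error and a missing tool.} From $\M[v]+f(x,v)=0$ and $f(x,s)/s$ decreasing you get $f(x,v)\le \partial_sf(x,0)v$, hence $\M[v]+\partial_sf(x,0)v=\partial_sf(x,0)v-f(x,v)\ge 0$ (not $\le 0$ as you wrote). So $v$ is \emph{not} a test function in the sup defining $\lambda_p$; it is a test function for the infimum quantity $\lambda_p'$ (supersolutions go into $\lambda_p$, subsolutions into $\lambda_p'$), giving only $\lambda_p'\le 0$. To conclude $\lambda_p\le 0$ one must invoke the equality $\lambda_p=\lambda_p'$ for compactly supported kernels (Theorem~\ref{bcv-thm6}, quoted from \cite{Berestycki2014}). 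Your proposal never mentions $\lambda_p'$ nor this equality, so the nonexistence argument for $\lambda_p>0$ does not close as stated; the ``comparing with $\varphi_R$ and sliding'' fallback is too vague to repair it.

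\textbf{The borderline case $\lambda_p=0$ is much harder than ``strict monotonicity plus strong maximum principle''.} The paper's argument replaces $\partial_sf(x,0)$ by $\gamma(x)=f(x,u)/u<\partial_sf(x,0)$, perturbs $\gamma$ by a small bump to $\tilde\gamma$ while keeping $\lambda_p(\M+\tilde\gamma)=0$, and then \emph{constructs} a positive $L^1$ eigenfunction $\psi$ for $\M+\tilde\gamma$ via a Harnack inequality on large balls plus an explicit exponential barrier; the contradiction is then obtained by an integration-by-symmetry identity. Because the nonlocal operator does not regularize, the generalized principal eigenvalue $\lambda_p$ need not be attained in any reasonable function space, so a bare maximum-principle argument at the critical level is not available --- you need the explicit construction of $\psi\in L^1$.

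\textbf{You never build the decaying, integrable uniform supersolution.} Bounding $u_R$ above by $\|S\|_{L^\infty}$ only gives $\tilde u\in L^\infty$. The paper's Lemma~\ref{bcv-lem-unif-supersol} produces $\bar u\in C_0\cap L^1$ using (H4) and the compact support of $J$ (the exponential barrier $Ce^{-\alpha|x|}$). This is what guarantees $\tilde u\in L^1(\R^N)$, and that fact underlies both the uniqueness proof (the paper integrates each equation against the other solution, which requires the integral to make sense) and the stated $L^1$-convergence of $u(t,\cdot)$ to $\tilde u$. Your tightness argument via (H4) is pointed in the right direction but depends on this construction, which must be done explicitly.

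\textbf{Smaller points.} Your uniqueness via sliding is a genuinely different route from the paper's integral argument; it can be made to work using the ordering $v\ge \tilde u$ obtained by comparison with $u_R$, but it is not obviously simpler, and the integral argument exploits the symmetry of $J$ cleanly once $\tilde u\in L^1$. For the dynamics when $\lambda_p\ge 0$, the proposed barrier $\bar\varphi e^{-\lambda_p t/2}$ gives nothing when $\lambda_p=0$; the paper instead shows that the monotone decreasing flow $z(t,x)$ starting from a large constant converges to a nonnegative bounded stationary solution, which must be $0$ by the nonexistence result --- no explicit decay rate is needed. Finally, the claim that Step~1 (finite-ball approximation of $\lambda_p$) is the crux understates where the work actually is: that approximation is cited from the companion paper \cite{Berestycki2014}, while the delicate parts are the $\lambda_p=\lambda_p'$ identity, the $\lambda_p=0$ borderline, and the decaying supersolution.
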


We observe that the stationary solution  $\tilde u$ may not necessarily be continuous. For some homogeneous problems $f(x,u)=f(u)$, it is known that discontinuous solution may exists \cite{Coville2008a}.  
If $f$ satisfies the stronger hypothesis that, for any $x$, $f(x,u)$ is
concave with respect to $u$, then actually the solution
$\tilde u$ is continuous. To see this, it suffices to notice that $J \star \tilde u >0$ in $\R^N$. The concavity of $f$ with respect
to $u$ implies that for any $x$ the map $ u \mapsto u - f(x,u)$ is
strictly increasing whenever $u - f(x,u)>0$. Then from the
continuity of $J \star \tilde u$ and \eqref{bcv-eq},
which can be rewritten as in the form $J \star \tilde u = \tilde u - f(x,\tilde u)$, we
deduce that $\tilde u$ is continuous.
\bigskip

Next, we aim at understanding the effect of the dispersal kernel on the persistence of the species, more precisely, of its range and scaling.
To this end, we analyse the behaviour of the persistence criteria under some  scaling of the dispersal operator. 
 More precisely, let $J_\eps:= \frac{1}{\eps^N}J\left(\frac{z}{\eps}\right)$ and let $\M_\eps$ denotes the operator $\m$ with the rescaled kernel, that is, $\m_{\eps}[\varphi]:=J_\eps\star \varphi -\varphi$.
  We are interested in the behaviour of the solution to \eqref{bcv-eq}  as $\eps \to 0$ or $\eps \to +\infty$ where the dispersal operator $\m$ is replace by $\gamma(\eps,m)\M_\eps$, with $\gamma(\eps,m)\sim \frac{\alpha_0}{\eps^m}$. 
\smallskip
  
  These asymptotics represent two possible strategies that are observed in nature. The dispersal kernel  $\gamma(\eps,m)J_\eps$  arises when the dispersal of the species is conditioned by  \textit{ a dispersal budget} as defined in \cite{Hutson2003}.  Roughly speaking, for a fixed cost,  this budget is a way to  measure  the differences between different range strategies. For a given cost function of the order of $|y|^m$, the term $\gamma(\eps,m)$ behaves like $ \frac{\alpha_0}{\eps^m}$ and in the analysis, the dispersal operator  is then  given by $\gamma(\eps,m)\M_\eps$. As explained in \cite{Hutson2003},  the limit as $\eps \to 0$ can be associated to a strategy of producing many  offspring but with little capacity of movement. On the other extreme of the spectrum, the limit $\eps \to +\infty$ corresponds to a strategy that aims at maximizing the exploration of  the environment at the expense of the number of offspring produced. We will discuss more precisely this notion of \textit{ dispersal budget} in Section \ref{bcv-section-bio}, where we also interpret our findings in this context.    
  For simplicity, we introduce the notation $\m_{\eps,m}$ to denote the operator $\gamma(\eps,m)\m_\eps$.

In the present paper, we analyse  the cases $0\le m\le 2$ and $\alpha_0=1$.  The study of the case $m=0$  corresponds to understanding the impact of the mean distance by itself on the persistence criteria. To simplify the presentation of these asymptotics, we restrict our discussion to nonlinearities $f(x,s)$ of the form 
$$f(x,s)=s(a(x)-s).$$ 

However, most of the proofs apply to a more general nonlinearity $f(x,s)$, and with $a(x)=\partial_sf(x,0)$. 

Our first result deals with the case $m=0$. 

\begin{theorem}\label{bcv-thm3}
Assume that $J$ and $f$ satisfy (H1-H4), $J$ is compactly supported and let $m=0$. Then there exists $\eps_0\in (0,+\infty]$ so that for all $\eps<\eps_0$ there exists a positive solution $u_\eps$ to \eqref{bcv-eq}. Moreover, at this value $\eps_0$, we have 
$$\lim_{\eps\to \eps_0}u_\eps(x)=(a(x)-1)^+,$$
where $s^+$ denotes the positive part of $s$ (i.e. $s^+=sup\{0,s\}$).
Assuming further that $a$ is smooth, Lipschitz continuous, we have 
$$\lim_{\eps\to 0}u_\eps(x)= v(x) \quad \text{almot everywhere}$$
where $v$ is a non-negative bounded solution of 
$$v(x)(a(x)-v(x))=0 \quad \text{ in }\quad \R^N.$$
When $\eps_0<+\infty$ and, in addition, $a(x)$ is  symmetric ($a(x)=a(-x)$ for all $x$) and the map $t\to a(tx)$ is non increasing for all $x,t>0$,  then $\eps_0$ is sharp, in the sense that for all $\eps \ge \eps_0$
there is no positive solution of \eqref{bcv-eq}.
\end{theorem}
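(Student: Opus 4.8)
\emph{Notation and reduction.} Write $a:=\partial_s f(x,0)$, so here $f(x,s)=s(a-s)$, and set $g(\eps):=\lambda_p(\M_\eps+a)$; for $m=0$, $\alpha_0=1$ the stationary equation is $\M_\eps[u]+u(a-u)=0$ in $\R^N$. The plan is to base everything on Theorem~\ref{bcv-thm1}, applied with the kernel $J_\eps$ (which still satisfies (H1)--(H2) and is compactly supported): a positive solution $u_\eps$ exists, and is then the unique bounded solution, \emph{if and only if} $g(\eps)<0$. So I first study the sign of $g$ on $(0,+\infty)$. Two bounds are immediate for every $\eps$: testing the defining inequality of $\lambda_p$ with $\varphi\equiv1$ gives $g(\eps)\ge-\sup_{\R^N}a$; and since $J_\eps(0)>0$ forces $J_\eps\star\varphi>0$, any admissible pair forces $1-a(x)-\lambda>0$ pointwise, so $g(\eps)\le 1-\sup_{\R^N}a$. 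For $\eps$ small I would instead build a positive solution by hand: assuming $\sup a>0$ (implicit in the statement — otherwise there is never a positive solution), choose $x_*,r$ with $a\ge\tfrac12\sup a$ on $B_{2r}(x_*)$; then the scaled tent $\underline u_\eps:=\tfrac14(\sup a)(1-|x-x_*|/r)^+$ is a subsolution once $\eps$ is small (its ``corners'' contribute an error of order $O(\eps)$ because $\operatorname{supp}J_\eps$ has radius $O(\eps)$), while the constant $\sup a$ is a supersolution, and the monotone iteration of Theorem~\ref{bcv-thm1} produces a positive solution; hence $g(\eps)<0$ for $\eps$ small. Using the continuity of $\lambda_p$ with respect to the kernel ($\|J_\eps-J_{\eps'}\|_{L^1}\to0$), $g$ is continuous on $(0,+\infty)$, so $\eps_0:=\sup\{\delta>0:\ g<0\text{ on }(0,\delta)\}\in(0,+\infty]$ is well defined, $g<0$ on $(0,\eps_0)$ (giving $u_\eps$ for $\eps<\eps_0$), and $g(\eps_0)=0$ when $\eps_0<+\infty$. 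Finally $g(\eps)\le 1-\sup a$ shows that $\eps_0<+\infty$ forces $\sup a\le1$, i.e. $(a-1)^+\equiv0$ in that case.

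\emph{Sharpness.} Under the extra hypotheses ($a(x)=a(-x)$; $t\mapsto a(tx)$ nonincreasing on $(0,\infty)$) I would use the rescaling $\varphi(x)\mapsto\varphi(x/\eps)$, which turns the defining inequality of $g(\eps)$ into that of $\lambda_p(\M_1+a(\eps\,\cdot))$: after rescaling the operator is fixed and only the dilated potential moves. For $0<\eps<\eps'$ and every $y$, $a(\eps'y)=a(\tfrac{\eps'}{\eps}\cdot\eps y)\le a(\eps y)$, so $a(\eps'\cdot)\le a(\eps\cdot)$ pointwise; since $c\mapsto\lambda_p(\M_1+c)$ is nonincreasing (immediate from the definition), $g$ is nondecreasing. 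Therefore $\{g<0\}=(0,\eps_0)$ exactly, so when $\eps_0<+\infty$ we have $g(\eps)\ge g(\eps_0)=0$ for all $\eps\ge\eps_0$, and Theorem~\ref{bcv-thm1} gives: no positive solution of \eqref{bcv-eq} for such $\eps$. This is the claimed sharpness.

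\emph{Uniform bounds and the limit $\eps\to\eps_0$.} For $\eps<\eps_0$, $\sup a$ is a supersolution, so $0<u_\eps\le\sup a=:M$, with $u_\eps$ continuous (concavity of $f(x,\cdot)$, cf. the Remark after Theorem~\ref{bcv-thm1}). Integrating the equation over $\R^N$ (convolution by $J_\eps$ preserves mass) gives $\int u_\eps^2=\int a\,u_\eps$; since $\{a>0\}$ is bounded and $a\le-\delta<0$ off a large ball by (H4), Cauchy--Schwarz on $\{a>0\}$ plus coercivity of $-a$ at infinity yield $\|u_\eps\|_{L^1(\R^N)}+\|u_\eps\|_{L^2(\R^N)}\le C$ uniformly in $\eps$; also, rewriting the equation as $u_\eps(1+u_\eps-a)=J_\eps\star u_\eps>0$ gives the pointwise bound $u_\eps\ge(a-1)^+$. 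Now two cases. If $\eps_0=+\infty$: Young's inequality gives $\|J_\eps\star u_\eps\|_\infty\le\|J_\eps\|_\infty\|u_\eps\|_{L^1}=\eps^{-N}\|J\|_\infty\|u_\eps\|_{L^1}\to0$ as $\eps\to\infty$, hence $u_\eps(1+u_\eps-a)\to0$ uniformly, and with $0<u_\eps\le M$ and $u_\eps\ge(a-1)^+$ an inspection of the roots of $t\mapsto t(1+t-a(x))$ forces $u_\eps(x)\to(a(x)-1)^+$ for every $x$. If $\eps_0<+\infty$ (so $g(\eps_0)=0$ and, by the first paragraph, $(a-1)^+\equiv0$): using $\|J_\eps-J_{\eps_0}\|_{L^1}\to0$, the uniform bounds, Arzel\`a--Ascoli for the equicontinuous bounded family $\{J_\eps\star u_\eps\}$, and the explicit relation $u_\eps=\tfrac12\big((a-1)+\sqrt{(a-1)^2+4J_\eps\star u_\eps}\,\big)$, I would extract (along a subsequence) $u_\eps\to u_*$ locally uniformly, with $u_*$ bounded, nonnegative and solving $\M_{\eps_0}[u_*]+u_*(a-u_*)=0$; since $g(\eps_0)\ge0$, the asymptotic part of Theorem~\ref{bcv-thm1} (applied at $\eps_0$ to the stationary datum $u_*$) forces $u_*\equiv0$, and as every subsequence yields the same limit, $u_\eps\to0=(a-1)^+$.

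\emph{The limit $\eps\to0$, and the main obstacle.} Here $0<u_\eps\le M$ and $\M_\eps$ is an approximate identity: along a subsequence $u_\eps\rightharpoonup v$ and $J_\eps\star u_\eps\rightharpoonup v$ weakly-$\ast$, so $u_\eps^2-a\,u_\eps=\M_\eps[u_\eps]=J_\eps\star u_\eps-u_\eps\rightharpoonup0$ (tested against compactly supported functions); writing $u_\eps^2\rightharpoonup w$ with $w\ge v^2$ and $a\,u_\eps\rightharpoonup a\,v$ locally, one gets $w=a\,v$, hence $v(a-v)\ge0$ a.e. The reverse inequality $v(a-v)\le0$ — equivalently $w=v^2$, i.e. \emph{strong} ($L^1_{\mathrm{loc}}$, or a.e.) convergence $u_\eps\to v$ — is where the Lipschitz hypothesis on $a$ enters: comparing $u_\eps$ with the translates $u_\eps(\cdot+\eps z)$, whose mismatch in the equation is $O(\operatorname{Lip}(a)\,\eps)$ since the kernel lives at scale $\eps$, one controls the oscillation of $u_\eps$ and upgrades to a.e. convergence, whence $v^2=a\,v$, i.e. $v(x)(a(x)-v(x))=0$ a.e.; thus $v$ is a nonnegative bounded solution of this equation and $u_\eps\to v$ a.e. (a refined $\eps$-dependent subsolution built from $a$ moreover gives $\liminf u_\eps\ge a$ where $a>0$, forcing $v=a^+$ and convergence of the whole family). \textbf{The hard part} is exactly this last step: passing to the limit in the nonlinear term $u_\eps^2$ while the nonlocal operator degenerates to the identity, with \emph{no} uniform modulus of continuity for $u_\eps$ available (indeed $J_\eps\star u_\eps$ is equicontinuous only at scale $\eps$). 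The Lipschitz bound on $a$ is precisely what is needed to rule out an oscillation defect — via $\eps$-dependent sub/supersolutions built from $a$, or a defect-measure argument — and to conclude $v(a-v)=0$ rather than merely $\ge0$; the earlier steps are comparatively soft, resting on the comparison principle, the monotone iteration scheme and the continuity of $\lambda_p$, all already available from the machinery behind Theorem~\ref{bcv-thm1}.
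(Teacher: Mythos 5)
Your proposal has the same skeleton as the paper's proof: reduce everything to the sign of $g(\eps)=\lambda_p(\M_\eps+a)$ via Theorem~\ref{bcv-thm1}, define $\eps_0$ as the first failure threshold, treat the two cases $\eps_0<\infty$ and $\eps_0=\infty$ separately for the limit at $\eps_0$, and obtain sharpness from the rescaling $\lambda_p(\M_\eps+a)=\lambda_p(\M+a(\eps\cdot))$ (Lemma~\ref{bcv-lem-scal-eq}) together with the monotonicity of $\lambda_p$ in the potential. The sharpness paragraph and the case $\eps_0=\infty$ of the limit (where you use $\|J_\eps\|_\infty\|u_\eps\|_{L^1}\to0$, which is exactly the mechanism of Lemma~\ref{bcv-lem-esti2} and Remark~\ref{bcv-rem-m0}, combined with the lower bound $u_\eps\ge(a-1)^+$ from Lemma~\ref{bcv-lem-esti1}(iv)) match the paper. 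Where you diverge usefully: to obtain $g(\eps)<0$ for small $\eps$ you construct a ``tent'' subsolution, while the paper simply quotes Theorem~\ref{bcv-thm6} ($\lim_{\eps\to0}\lambda_p(\M_\eps+a)=-\sup a$); your route is self-contained but needs the near-boundary and near-apex estimates made explicit before the phrase ``corners contribute $O(\eps)$'' can be accepted. And for the finite-$\eps_0$ case you appeal to Arzel\`a--Ascoli on $J_\eps\star u_\eps$ and the quadratic-formula representation of $u_\eps$, whereas the paper derives a Lipschitz bound on $u_\eps$ directly from $J_\eps\star u_\eps=-g(x,u_\eps)$ and then invokes Harnack; these are equivalent in spirit. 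One soft spot: you infer $g(\eps_0)=0$ from ``$g$ is continuous because $\|J_\eps-J_{\eps'}\|_{L^1}\to0$'', but $\lambda_p$ is defined through possibly unbounded test functions, so $L^1$-continuity of the kernel does not obviously transfer; the paper instead obtains the needed continuity at $\eps_0$ through the rescaling (Lemma~\ref{bcv-lem-esti-conti}), and you should use that route.

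The genuine gap is the limit $\eps\to0$, precisely the step you label ``the hard part.'' Your weak-$\ast$ setup ($u_\eps\rightharpoonup v$, $u_\eps^2\rightharpoonup w=av\ge v^2$, hence $v(a-v)\ge0$) is correct, but the upgrade to $v(a-v)\le0$ is not supplied: you outline a translation comparison between $u_\eps$ and $u_\eps(\cdot+\eps z)$ based on the $O(\operatorname{Lip}(a)\eps|z|)$ mismatch in $a$, but this does not in any obvious way yield strong $L^2_{\mathrm{loc}}$ convergence. The $\eps$-scale nonlocal Dirichlet form $\iint J_\eps(x-y)(u_\eps(x)-u_\eps(y))^2$ is merely $O(1)$ when $m=0$ (Lemma~\ref{bcv-lem-esti1}(ii)), so the $\eps$-scale oscillation of $u_\eps$ is bounded but not vanishing, and there is no uniform-in-$\eps$ $L^\infty$-stability of the solution map $a\mapsto u_\eps^a$ to lean on. The paper takes a different and more effective route: it sets $w_\eps=a-u_\eps$, rewrites the equation for $w_\eps$, multiplies by $w_\eps^+$ and integrates, obtaining (after symmetrizing the double integral) the estimate $\int u_\eps(w_\eps^+)^2\le C\eps$, where the right-hand side comes from $\|a-J_\eps\star a\|_\infty\le C\operatorname{Lip}(a)\,\eps$. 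Combining this with the global identity $\int u_\eps(a-u_\eps)=0$ and the sign of $a$ off $Q:=\operatorname{supp}a^+$ gives $\int u_\eps|a-u_\eps|\to0$ and $\int_{\R^N\setminus Q}u_\eps^2\to0$. That is where the Lipschitz hypothesis on $a$ enters, not through a translation argument. If you want to keep your weak-convergence framing, you still need a quantitative estimate of exactly this type to show $w=v^2$; the translation idea as stated does not produce it. (Your parenthetical claim that the limit is necessarily $v=a^+$ also overshoots: the theorem only identifies $v$ as some nonnegative bounded solution of $v(a-v)=0$, and the paper explicitly stresses that this equation has infinitely many such solutions.)
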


The ecological interpretation of this result bears on the single range of expansion factor. It shows that a strategy for  species to persist  is to match the resource and not to move much.  
Note that it can happen that  $\eps_0=+\infty$ and then there is no effect of the dispersal on the persistence criteria of the species. 
A natural sufficient condition for this to happen  is $$(a(x)-1)^+\neq 0.$$
In this context, the birth rates exceed all death rates and this guarantees the persistence of the population regardless the dispersal strategy. In particular, there exists a bounded positive solution to \eqref{bcv-eq} for any positive kernel $J$. The uniqueness and the behaviour at infinity of the solution are  still open questions for general kernels. 

When $m>0$, the characterisation of the existence of a positive solution changes and a new picture emerges. In particular, for large $\eps$ there is always a positive solution of \eqref{bcv-eq}, whereas for small $\eps$, when $m=2$, it may happen  that no positive solution exists.
Thus, when $m=2$,the situation is, in a sense, opposite to the case when $m=0$. 
Here is our precise results.
\begin{theorem}\label{bcv-thm4}
Assume that $J$ and $f$ satisfy (H1-H4), $J$ is compactly supported and  let $0<m< 2$. There exist $\eps_0\le \eps_1 \in (0,+\infty)$ such that for all $\eps\le \eps_0$ and all $\eps\ge \eps_1$ there exists a positive solution $u_\eps$ of \eqref{bcv-eq}. Moreover, we have 

$$
\lim_{\eps\to +\infty}\|u_\eps-a^+\|_{\infty}=0,\qquad \lim_{\eps\to +\infty}\|u_\eps-a^+\|_{L^2(\R^N)}=0.
$$

In addition,  assuming further that $a$ is  $C^{2}(\R^N)$, we have 
$$\lim_{\eps\to 0}u_\eps(x)= v(x) \quad \text{almost everywhere},$$
where $v$ is a non-negative bounded solution of 
$$v(x)(a(x)-v(x))=0 \quad \text{ in }\quad \R^N.$$

\end{theorem}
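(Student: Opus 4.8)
The plan is to reduce existence, exactly as in Theorem~\ref{bcv-thm1}, to the sign of the generalised principal eigenvalue $\lambda_p(\eps):=\lambda_p(\m_{\eps,m}+a)$ — the proof of Theorem~\ref{bcv-thm1} carries over once $\M$ is replaced by $\m_{\eps,m}=\gamma(\eps,m)\,\m_\eps$, which has the same maximum-principle structure — and then to analyse the two singular limits $\eps\to0$ and $\eps\to+\infty$. Since $f(x,s)=s(a(x)-s)$, any bounded positive solution $u_\eps$ of \eqref{bcv-eq} (with $\m_{\eps,m}$) satisfies $0<u_\eps\le\bar M:=\sup_{\R^N}a$ (the constant $\bar M$ is a supersolution; we may assume $\bar M>0$, else there is nothing to prove), so $0<J_\eps\star u_\eps\le\bar M$, and solving the quadratic $u_\eps^2+(\gamma-a)u_\eps-\gamma J_\eps\star u_\eps=0$ pointwise gives
\[
u_\eps=\tfrac12\Bigl((a-\gamma)+\sqrt{(a-\gamma)^2+4\gamma\,J_\eps\star u_\eps}\Bigr),\qquad \gamma:=\gamma(\eps,m).
\]

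For existence when $\eps$ is small or large, fix $R$ with $\sup_{B_R}a>0$ and consider the self-adjoint ``Dirichlet'' operator $\mathcal L^R_\eps[\varphi]=\gamma\bigl(\int_{B_R}J_\eps(\cdot-y)\varphi\,dy-\varphi\bigr)+a\varphi$ on $L^2(B_R)$; by Krein--Rutman it has a principal eigenpair $(\lambda^R_\eps,\phi^R_\eps)$ with $\phi^R_\eps>0$, and since its Dirichlet quadratic form is $\le0$ one has $\lambda^R_\eps\ge-\sup_{B_R}a$. Testing the Rayleigh quotient with a fixed smooth bump (normalised in $L^2(B_R)$) concentrated near a near-maximal point of $a|_{B_R}$ and using $J_\eps\star\varphi-\varphi=\tfrac12D\eps^2\Delta\varphi+o(\eps^2)$, $D=\tfrac1N\int|z|^2J$, for $\eps$ small (and a crude operator bound for $\eps$ large), the corrections are $O(\gamma\eps^2)=O(\eps^{2-m})$ and $O(\gamma)=O(\eps^{-m})$ respectively, both $\to0$ since $0<m<2$; hence $\lambda^R_\eps\to-\sup_{B_R}a<0$ in both regimes. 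Extending $\phi^R_\eps$ by $0$ gives a nontrivial nonnegative supersolution of $(\m_{\eps,m}+a+\lambda^R_\eps)\psi\ge0$ on $\R^N$, so $\lambda_p(\eps)\le\lambda^R_\eps<0$ for $\eps$ small or large, and Theorem~\ref{bcv-thm1} yields $u_\eps$; taking $\eps_0$ (resp.\ $\eps_1$) to be the endpoint of the maximal interval $(0,\eps_0]$ (resp.\ $[\eps_1,+\infty)$) on which $\lambda_p<0$ gives $0<\eps_0\le\eps_1<+\infty$.

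For $\eps\to+\infty$: from the representation above and $0\le J_\eps\star u_\eps\le\bar M$, comparing $\sqrt{(a-\gamma)^2+4\gamma J_\eps\star u_\eps}$ first with $|a-\gamma|$ and then with $|a|$ gives $\|u_\eps-a^+\|_\infty\le\gamma+\sqrt{\bar M\gamma}\to0$. For the $L^2$ statement, pick $R$ with $a\le-\delta<0$ and $a^+\equiv0$ on $B_R^c$ (possible by (H4)); then $\|u_\eps-a^+\|_{L^2(B_R)}\le|B_R|^{1/2}\|u_\eps-a^+\|_\infty\to0$, while on $B_R^c$ dissipativity gives $u_\eps^2\le\gamma(J_\eps\star u_\eps-u_\eps)$, and combining the mass identity $\int(J_\eps\star u_\eps-u_\eps)=0$ (valid as $u_\eps\in L^1$ by the decay estimates) with $\gamma(u_\eps-J_\eps\star u_\eps)=u_\eps(a-u_\eps)$ on $B_R$ gives $\int_{B_R^c}u_\eps^2\le\int_{B_R}u_\eps(a-u_\eps)\to\int_{B_R}a^+(a-a^+)=0$ by the uniform convergence on $B_R$. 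Hence $\|u_\eps-a^+\|_{L^2(\R^N)}\to0$.

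For $\eps\to0$ with $a\in C^2$: now $\gamma\to+\infty$; on each ball $B_\rho$ the equation gives $\|u_\eps-J_\eps\star u_\eps\|_{L^\infty(B_\rho)}=\gamma^{-1}\|u_\eps(a-u_\eps)\|_{L^\infty(B_\rho)}=O(\eps^m)\to0$, and testing against $u_\eps\chi^2$ (with $\chi\in C^\infty_c$) yields the energy estimate $\int\!\!\int J_\eps(x-y)(u_\eps\chi(x)-u_\eps\chi(y))^2\,dx\,dy=O(\eps^m)\to0$ (the $\nabla\chi$ terms being $O(\gamma\eps^2)=O(\eps^{2-m})\to0$). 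Granted compactness, extract $\eps_n\to0$, $u_{\eps_n}\to v$ a.e.\ with $0\le v\le\bar M$; passing to the limit in $\gamma\int u_{\eps_n}(J_{\eps_n}\star\varphi-\varphi)+\int u_{\eps_n}(a-u_{\eps_n})\varphi=0$ for $\varphi\in C^2_c$, where the first term is $O(\gamma\eps_n^2)=O(\eps_n^{2-m})\to0$ by the Taylor expansion of $J_{\eps_n}\star\varphi-\varphi$, gives $\int v(a-v)\varphi=0$ for all $\varphi$, i.e.\ $v$ is a nonnegative bounded solution of $v(a-v)=0$. The step I expect to be hardest is precisely this compactness: the uniform bound $0\le u_\eps\le\bar M$ and the vanishing energy $O(\eps^m)$ do not by themselves exclude oscillations of $u_\eps$ at the scale $\eps^{(2-m)/2}$, so one must exploit $a\in C^2$ (strictly stronger than the Lipschitz hypothesis of Theorem~\ref{bcv-thm3}) — via the representation above and an iteration of the smoothing operator $J_\eps\star\cdot$ — to obtain a quantitative modulus for $u_\eps$ and hence an a.e.-convergent subsequence; weak convergence alone would only give $v(a-v)\ge0$, so this strong convergence is what pins down the algebraic limit equation.
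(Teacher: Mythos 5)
Your existence step and the $\eps\to\infty$ analysis are correct but take genuinely different routes from the paper, while your $\eps\to0$ step has a real gap that the paper's own argument is specifically designed to avoid.

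On existence, the paper simply invokes Theorem \ref{bcv-thm6} to get $\lim_{\eps\to0}\lambda_p(\m_{\eps,m}+a)=\lim_{\eps\to\infty}\lambda_p(\m_{\eps,m}+a)=-\sup a<0$, then applies Theorem \ref{bcv-thm1}. You re-derive the asymptotics on a fixed ball $B_R$. That is fine in spirit, but beware of the phrase ``by Krein--Rutman it has a principal eigenpair'': the paper explicitly stresses in Section \ref{bcv-section-pre} that $\lb{\O}+a$ and its resolvent are \emph{not} compact when $a$ is non-constant, so Krein--Rutman does not apply and the principal eigenvalue need not be attained in $C(\bar\O)$ (Proposition \ref{bcv-prop-phip} gives the sharp criterion). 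Your conclusion can be salvaged by skipping the eigenfunction entirely: the Rayleigh-quotient bound with a smooth bump gives an upper bound for $\lambda_v$, and $\lambda_p=\lambda_v$ by Theorem \ref{bcv-thm6}, which is all you need. Your $\eps\to\infty$ argument via the explicit positive root of the pointwise quadratic is a nice alternative to the paper's use of the subsolution $(a-\eps^{-m})^+$ (Lemma \ref{bcv-lem-esti1}(iv)) and supersolution $a^++\eps^{-N/4}$ (Lemma \ref{bcv-lem-esti2}), and your $L^2$ argument via mass conservation and dissipativity outside a large ball is a valid substitute for the paper's Fatou + Cauchy--Schwarz + weak-to-strong argument; both buy you the same $L^2$-limit.

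The gap is the $\eps\to0$ limit. You honestly flag compactness as the missing step and hope that $a\in C^2$ plus ``an iteration of the smoothing operator $J_\eps\star\cdot$'' will give a modulus of continuity, but you do not supply it — and indeed your energy estimate $\iint J_\eps(x-y)(u_\eps\chi(x)-u_\eps\chi(y))^2\,dx\,dy=O(\eps^m)$ is too weak to give an $L^2_{loc}$-compact family when $m<2$ (the Bourgain--Brezis--Mironescu characterisation requires the energy to be comparable to $\eps^2$, not $\eps^m$). The paper sidesteps compactness entirely (this is the point of Remark \ref{bcv-rem-proof-conv}, which transfers the $m=0$ argument). Setting $w_\eps=a-u_\eps$, the equation becomes $-\gamma J_\eps\star w_\eps+\gamma w_\eps+u_\eps w_\eps=\gamma(a-J_\eps\star a)$; multiplying by $w_\eps^+$, integrating, and using that the resulting nonlocal quadratic form is nonnegative gives $\int u_\eps(w_\eps^+)^2\le C\gamma\|a-J_\eps\star a\|_\infty$. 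Since $a\in C^2$, $\|a-J_\eps\star a\|_\infty=O(\eps^2)$, so $\int u_\eps(w_\eps^+)^2\le C\eps^{2-m}\to0$; combined with $\int u_\eps w_\eps=0$ and the sign of $a$ outside $Q:=\textrm{supp}(a^+)$, one gets $\int u_\eps|a-u_\eps|\to0$, hence a.e.\ convergence along a subsequence to a bounded nonnegative $v$ with $v(a-v)=0$. Note this is also exactly why the $C^2$ hypothesis enters: it is used to make the forcing term $\gamma(a-J_\eps\star a)$ small, not to upgrade compactness. If you want to complete your own proposal, you would need either to reproduce this $w_\eps^+$ device or to prove compactness by some other means; your current write-up leaves the crucial step unproved.
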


In the next Theorem, we require the following notation for the second moment of $J$:
$$D_2(J):=\int_{\R^N}J(z)|z|^2\,dz.$$

\begin{theorem}\label{bcv-thm5}
Assume that $J$ and $f$ satisfy (H1-H4), $J$ is compactly supported and  let $m=2$. Then, there exists $\eps_1\in (0,\infty)$ so that for all $\eps\ge \eps_1$ there exists a positive solution $u_\eps$ to \eqref{bcv-eq}. Moreover, 
$$\lim_{\eps\to +\infty}u_\eps=a^+(x).$$
In addition, if $J$ is radially symmetric, we have the following dichotomy 

\begin{itemize}
\item When $\lambda_{1}\left(\frac{D_2(J)}{2N}\Delta+a(x)\right)<0,$ there exists $\eps_0\in (0,\infty)$ such that for all $\eps\le \eps_0$ there exists a positive solution of \eqref{bcv-eq} and 
$$u_\eps\to v,\quad \text{ in } \quad L_{loc}^2(\R^N), $$
where $v$ is the unique bounded non-trivial  solution of  
$$\frac{D_2(J)}{2N}\Delta v+v(a(x)-v)=0 \quad \text{in}\quad \R^N. $$

\item When $\lambda_{1}\left(\frac{D_2(J)}{2N}\Delta+a(x)\right)>0$
there exists $\eps_0\in (0,\infty)$ such that for all $\eps\le \eps_0$ \eqref{bcv-eq} does not have any positive solution. 
\end{itemize}
\end{theorem}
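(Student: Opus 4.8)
The whole argument runs through Theorem~\ref{bcv-thm1}. Applying it with the kernel $\eps^{-2}J_\eps$ (continuous and compactly supported, so that its proof applies verbatim) and writing $\L_\eps:=\tfrac1{\eps^2}\M_\eps+a(\cdot)$, the equation $\tfrac1{\eps^2}\M_\eps[u]+u(a-u)=0$ admits a positive solution $u_\eps$ -- then unique and $\le M:=\max_{\R^N}a$ -- if and only if $\lambda_p(\L_\eps,\R^N)<0$. We may assume $M>0$, since otherwise extinction occurs for every $\eps$. The constant $M$ is a supersolution for all $\eps$ and will serve as a universal upper barrier.

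\emph{Large $\eps$ (no symmetry needed).} Fix a smooth bump $0\le\psi\le1$, $\psi\not\equiv0$, with $\mathrm{supp}\,\psi\subset\{a\ge M/2\}$. Since $J_\eps\star\psi\ge0$ and $\psi\le1$, the function $\tfrac M4\psi$ satisfies $\tfrac1{\eps^2}\M_\eps(\tfrac M4\psi)+\tfrac M4\psi\bigl(a-\tfrac M4\psi\bigr)\ge\tfrac M4\psi\bigl(\tfrac M4-\tfrac1{\eps^2}\bigr)\ge0$ as soon as $\eps^2\ge4/M$, and trivially off $\mathrm{supp}\,\psi$; being $\le M$, it is a subsolution, and a positive solution $u_\eps$ with $\tfrac M4\psi\le u_\eps\le M$ exists for $\eps\ge\eps_1:=2/\sqrt M$. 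For the limit, rewrite the equation as $u_\eps(u_\eps-a)=-\tfrac1{\eps^2}\M_\eps u_\eps$, so that $\|u_\eps(u_\eps-a)\|_\infty\le 2M\eps^{-2}=:\eta_\eps\to0$. Fix $\theta>0$: where $a\le0$ this gives $u_\eps\le\sqrt{\eta_\eps}$; where $0<a\le\theta$ it gives $u_\eps\le\theta+\sqrt{\eta_\eps}$; and on the compact set $\{a\ge\theta\}$, running the same barrier with a bump equal to $1$ there and supported in $\{a\ge\theta/2\}$ yields $u_\eps\ge\theta/4$ for $\eps$ large, hence $|u_\eps-a|=|u_\eps(u_\eps-a)|/u_\eps\le4\eta_\eps/\theta$. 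Thus $\limsup_{\eps\to\infty}\|u_\eps-a^+\|_{L^\infty(\R^N)}\le\theta$ for every $\theta>0$, i.e. $u_\eps\to a^+$ uniformly.

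\emph{Small $\eps$, $J$ radial.} The two inputs are: (i) by $\int_{\R^N}J(z)z_iz_j\,dz=\tfrac{D_2(J)}{N}\delta_{ij}$, a Taylor expansion combined with interior Schauder and Harnack estimates gives, for every positive $\Phi$ solving $\tfrac{D_2(J)}{2N}\Delta\Phi+b\Phi=0$ with $b$ bounded in $C^{0,\alpha}(\R^N)$, the scale-invariant approximation $\bigl|\tfrac1{\eps^2}\M_\eps\Phi-\tfrac{D_2(J)}{2N}\Delta\Phi\bigr|\le\kappa_\eps\Phi$ on $\R^N$ with $\kappa_\eps=O(\eps^\alpha)$; and (ii) testing the equation against $u_\eps$ (legitimate since $u_\eps$ decays exponentially) gives the $\eps$-uniform bound $\tfrac1{2\eps^2}\iint J_\eps(x-y)|u_\eps(x)-u_\eps(y)|^2\,dx\,dy=\int u_\eps^2(a-u_\eps)\le M^2\|a^+\|_\infty|\{a>0\}|$. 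I expect the main work to lie in the attendant nonlocal-to-local passage; beyond (i) it needs the Bourgain--Brezis--Mironescu / Ponce compactness theorem, by which the bound in (ii) together with $0\le u_\eps\le M$ makes $\{u_\eps\}$ precompact in $L^2_{\mathrm{loc}}(\R^N)$ with every limit point in $H^1_{\mathrm{loc}}$. Now set $\lambda_1:=\lambda_1\bigl(\tfrac{D_2(J)}{2N}\Delta+a,\R^N\bigr)$ and let $\Phi>0$ be an associated generalised principal eigenfunction (built by exhausting $\R^N$ with balls and passing to the limit via Harnack), so $\tfrac{D_2(J)}{2N}\Delta\Phi+(a+\lambda_1)\Phi=0$; when $\lambda_1<0<-\limsup_{|x|\to\infty}a$, this $\Phi$ is automatically bounded and exponentially decaying.

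\emph{The dichotomy.} If $\lambda_1<0$, then for $\eps$ small $\underline u_\eps:=\delta\Phi$, with $\delta:=\min\{|\lambda_1|/(2\|\Phi\|_\infty),\,M/\|\Phi\|_\infty\}$, obeys $\tfrac1{\eps^2}\M_\eps\underline u_\eps+\underline u_\eps(a-\underline u_\eps)\ge\delta\Phi\bigl(|\lambda_1|-\delta\Phi-\kappa_\eps\bigr)\ge\delta\Phi\bigl(\tfrac{|\lambda_1|}2-\kappa_\eps\bigr)\ge0$ and $\underline u_\eps\le M$, so a positive $u_\eps$ with $\delta\Phi\le u_\eps\le M$ exists. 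By the compactness above, $u_\eps\to v$ in $L^2_{\mathrm{loc}}$ along a subsequence, $v\in H^1_{\mathrm{loc}}$; passing to the limit in the weak form (using $\tfrac1{\eps^2}\M_\eps\zeta\to\tfrac{D_2(J)}{2N}\Delta\zeta$ for smooth compactly supported $\zeta$) gives $\tfrac{D_2(J)}{2N}\Delta v+v(a-v)=0$ in $\R^N$, while $v\ge\delta\Phi>0$ forces $v\not\equiv0$; since the nontrivial bounded solution of this local KPP equation is unique, $v$ is the same for every subsequence and the whole family converges. If instead $\lambda_1>0$, fix $\lambda^*\in(0,\lambda_1)$; then $\tfrac{D_2(J)}{2N}\Delta\Phi+(a+\lambda^*)\Phi=-(\lambda_1-\lambda^*)\Phi$, so by the approximation in (i), $\tfrac1{\eps^2}\M_\eps\Phi+(a+\lambda^*)\Phi\le\bigl(\kappa_\eps-(\lambda_1-\lambda^*)\bigr)\Phi<0$ on $\R^N$ once $\eps$ is small. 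Hence $\lambda^*$ is admissible in the definition of $\lambda_p(\L_\eps,\R^N)$, so $\lambda_p(\L_\eps,\R^N)\ge\lambda^*>0$, and by Theorem~\ref{bcv-thm1} there is no positive solution for $\eps$ small.
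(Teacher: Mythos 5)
Your proposal is correct and, for the delicate small-$\eps$ dichotomy, takes a genuinely different route from the paper. The paper proceeds through its Theorem~\ref{bcv-thm7}: it regularises $a$, works with the truncated operator $\lb{R,\eps,2}+a_\sigma$ on balls, invokes the convergence $\lambda_p(\lb{R,\eps,2}+a_\sigma)\to\lambda_1(\tfrac{D_2(J)}{2N}\Delta+a_\sigma,B_R)$, and then proves the nonlocal principal eigenfunctions $\varphi_{p,\eps}$ converge in $L^2(B_{R_0})$ to the local one $\varphi_1$ so that $\alpha\varphi_{p,\eps}$ can serve as a subsolution in the spirit of Lemma~\ref{bcv-lem-esti1}~(iii). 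You instead plant the local generalised principal eigenfunction $\Phi$ of $\tfrac{D_2(J)}{2N}\Delta+a$ on $\R^N$ directly into the nonlocal problem, controlling $\tfrac1{\eps^2}\M_\eps\Phi-\tfrac{D_2(J)}{2N}\Delta\Phi$ pointwise by $\kappa_\eps\Phi$ via a Taylor remainder combined with interior Schauder and Harnack estimates (which is legitimate since $a\in C^\alpha\cap L^\infty$, so the elliptic constants are uniform over $\R^N$, and the kernel has support in $B_\eps$). With this operator comparison in hand, $\delta\Phi$ is a subsolution when $\lambda_1<0$, and $\Phi$ is an admissible test function showing $\lambda_p\ge\lambda^*>0$ when $\lambda_1>0$; this bypasses Theorem~\ref{bcv-thm7} and the nonlocal-eigenfunction-convergence machinery altogether, at the cost of invoking more classical elliptic theory (construction of $\Phi$ by exhaustion, its boundedness and exponential decay when $\lambda_1<0<-\limsup a$, scale-invariant Schauder/Harnack). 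For large $\eps$ you also deviate from the paper: instead of the sandwich $(a-\eps^{-m})^+\le u_\eps\le a^++\eps^{-N/4}$ (Lemma~\ref{bcv-lem-esti1}~(iv), Lemma~\ref{bcv-lem-esti2}), you read $\|u_\eps(u_\eps-a)\|_\infty\le 2M\eps^{-2}$ directly off the equation and split $\R^N$ by the size of $a$; this is cleaner and self-contained. The $L^2_{\mathrm{loc}}$ compactness and passage to the local KPP equation coincide with the paper's (both use the Bourgain--Brezis--Mironescu / Ponce characterisation plus the identity~\eqref{bcv-eq-asb-id}). Two points you correctly flag as needing care but leave as standard: the construction and decay of $\Phi$, and the precise nonlocal-to-local limit in the weak form; neither is a gap, but both deserve the detail the paper supplies for its own version of the passage.
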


This last  result clearly highlights the dependence of the spreading strategy on the cost functions and the structure of ecological niche. Especially  when $m=2$,  the smaller spreader strategy may not be an optimal strategy, in the sense that a population adopting such strategy can go  extinct. This effect will be discussed in more detail in the next Section.  
\medskip

Lastly, we further establish  existence/ non-existence criteria when we relax the compactly supported constraint on the dispersal kernel $J$. In this direction, we investigate a class of kernel $J$ that can have a fat tail but still have some  decay at infinity. More precisely, we assume that
\medskip
  
  \textbf{(H5)} \qquad
  \hbox{$ \qquad\int_{\R^N}J(z)|z|^{N+1}<+\infty.$}
  \medskip

\begin{theorem}\label{bcv-thm2}
Assume that $J$ and $f$ satisfy (H1-H4) and assume further that $J$ satisfies $(H5)$.  Then   
\begin{itemize}
\item[(i)]   if $\lambda_p(\M +\partial_sf(x,0))>0$  there is no bounded positive solution of \eqref{bcv-eq},
\item[(ii)] if $\lim_{R\to \infty}\lambda_p(\lb{R}+\partial_sf(x,0))<0$ then there exists a unique positive solution of \eqref{bcv-eq}, where $$\oplb{\varphi}{R}:=\int_{B_R(0)}J(x-y)\varphi(y)\,dy -\varphi(x).$$
\end{itemize}
\end{theorem}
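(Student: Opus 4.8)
The plan is to deduce Theorem \ref{bcv-thm2} from Theorem \ref{bcv-thm1} by an exhaustion argument on balls, using monotonicity of the generalised principal eigenvalue with respect to the domain, together with a barrier construction at infinity provided by hypothesis \textbf{(H5)}.

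First, for part (i), suppose for contradiction that $u$ is a bounded positive solution of \eqref{bcv-eq} while $\lambda_p(\M+\partial_sf(x,0))>0$. Since $u>0$ solves $\M[u]+f(x,u)=0$ and $f$ is of KPP type (so $f(x,u)\le \partial_sf(x,0)u$ by the decreasing quotient property in \textbf{(H3)}), $u$ is a positive bounded supersolution of the linear problem $\M[\varphi]+\partial_sf(x,0)\varphi\le 0$. By the very definition of $\lambda_p$ as a supremum, any such positive supersolution forces $\lambda_p(\M+\partial_sf(x,0))\le 0$, a contradiction. This direction does not even use \textbf{(H5)}; it is the analogue of the "only if" half of Theorem \ref{bcv-thm1} and is essentially immediate.

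The substance is in part (ii). The idea is: if $\lim_{R\to\infty}\lambda_p(\lb R+\partial_sf(x,0))<0$, then for $R$ large the truncated operator $\lb R$ already has negative generalised principal eigenvalue, and $\lb R$ \emph{is} a compactly supported nonlocal operator to which the machinery behind Theorem \ref{bcv-thm1} applies on $\R^N$ (replacing $\M$ by $\lb R$; note $\lb R[\varphi](x)=\int_{B_R(0)}J(x-y)\varphi(y)\,dy-\varphi(x)$ has kernel $J\mathbf{1}_{B_R(0)}$, still continuous and nonnegative, though no longer of unit mass — one checks the proof of Theorem \ref{bcv-thm1} only uses \textbf{(H1)}--\textbf{(H4)} through the maximum principle, the sliding method and the sign of $\lambda_p$, all of which survive the loss of unit mass since the zeroth-order term is absorbed into $\partial_sf(x,0)\varphi-\varphi$). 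Hence for $R$ large there is a positive bounded solution $u_R$ of $\lb R[u_R]+f(x,u_R)=0$ in $\R^N$. Since $J\mathbf{1}_{B_R(0)}\le J$ pointwise and $u_R\ge 0$, we have $\M[u_R]+f(x,u_R)=\lb R[u_R]+f(x,u_R)+(J\star u_R-\lb R[u_R]-u_R+u_R)\ge (J-J\mathbf{1}_{B_R})\star u_R\ge 0$, so $u_R$ is a \emph{subsolution} of \eqref{bcv-eq}. For a supersolution, use the third line of \textbf{(H3)}: $\bar u\equiv \|S\|_{L^\infty(\R^N)}$ (or the function $S(x)$ itself if it is a supersolution) satisfies $\M[\bar u]+f(x,\bar u)\le 0$, using $J\star\bar u\le \|S\|_\infty=\bar u$ when $\bar u$ is constant. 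Then $u_R\le\bar u$ by comparison, and the monotone iteration (or Perron's method) between the ordered pair $u_R\le\bar u$ yields a bounded positive solution $\tilde u$ of \eqref{bcv-eq} with $u_R\le\tilde u\le\bar u$. Uniqueness of the bounded positive solution then follows from the standard KPP sliding argument: given two bounded positive solutions, \textbf{(H4)} provides exponential-type decay at infinity (via a comparison with solutions of the constant-coefficient nonlocal problem outside a large ball, where $\partial_sf(x,s)<0$), so that $t\tilde u$ lies above the other solution for $t$ large, and one slides $t$ down to $1$ using the strict monotonicity of $f(x,s)/s$, exactly as in the proof of the uniqueness clause of Theorem \ref{bcv-thm1}.

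The main obstacle I expect is precisely the role of \textbf{(H5)}: it is needed to guarantee that the bounded solution $\tilde u$ decays at infinity (so that uniqueness and the "genuine" persistence — as opposed to a spurious bounded solution not reflecting the niche structure — hold), and more delicately to control the error term $(J-J\mathbf{1}_{B_R(0)})\star u_R$ uniformly: the subsolution property above is fine, but to pass to the limit $R\to\infty$ and to run the decay-at-infinity barrier one needs $\int_{|z|\ge R}J(z)|z|\,dz\to 0$ type estimates, which is exactly what the $N+1$ moment condition buys (it controls the tail of $J$ against the linear growth one loses in the barrier construction for the nonlocal operator). Verifying that the barrier-at-infinity argument of Theorem \ref{bcv-thm1}, originally run for compactly supported $J$, extends to fat-tailed $J$ under \textbf{(H5)} — in particular that the generalised principal eigenvalue $\lambda_p(\M+\partial_sf(x,0))$ is still the relevant threshold and that $\lambda_p(\lb R+\partial_sf(x,0))\uparrow\lambda_p(\M+\partial_sf(x,0))$ as $R\to\infty$ — is the technically delicate point, and is the reason the statement is phrased with the limit $\lim_{R\to\infty}\lambda_p(\lb R+\partial_sf(x,0))$ rather than directly with $\lambda_p(\M+\partial_sf(x,0))$: without compact support one only gets a one-sided characterisation, hence the gap between (i) and (ii).
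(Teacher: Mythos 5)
Your proof of part (i) contains a sign error that breaks the argument. A bounded positive solution $u$ of \eqref{bcv-eq} satisfies $\M[u]=-f(x,u)$, and the KPP property $f(x,u)\le\partial_sf(x,0)u$ gives $\M[u]+\partial_sf(x,0)u\ge 0$, \emph{not} $\le 0$. So $u$ is a \emph{sub}solution of the linearised problem, i.e.\ a test function for $\lambda_p'$, which yields $\lambda_p'(\M+\partial_sf(\cdot,0))\le 0$. This tells you nothing about $\lambda_p$ ``by the very definition'' of the supremum; to close the argument one needs the nontrivial inequality $\lambda_p\le\lambda_p'$, which the paper imports from \cite{Berestycki2014} and which is proved there for kernels satisfying \textbf{(H5)} precisely because one cannot rely on compact support. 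Your claim that (i) does not use \textbf{(H5)} is therefore off: \textbf{(H5)} (or at least a moment condition) is exactly what makes $\lambda_p\le\lambda_p'$ available. (As a secondary point, even if $u$ were a supersolution with $\M u+\beta u\le 0$, the conclusion would be $\lambda_p\ge 0$, not $\lambda_p\le 0$.)

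For (ii), your description of the approximation is imprecise in a way that affects the argument. The operator $\lb R[\varphi](x)=\int_{B_R(0)}J(x-y)\varphi(y)\,dy-\varphi(x)$ truncates in $y$, not in $x-y$; it is not the convolution-minus-identity operator with kernel $J\mathbf{1}_{B_R(0)}$, and in particular it is not translation invariant. The paper does not solve $\lb R[u]+f(x,u)=0$ in $\R^N$: it solves it in $\bar B_R(0)$, which is a bounded-domain problem directly covered by Theorem~\ref{bcv-thm-existence}, and then lets $R\to\infty$ using monotonicity in $R$. Your reduction to ``Theorem~\ref{bcv-thm1} with $\M$ replaced by $\lb R$ on $\R^N$'' is not something the paper establishes and would need its own proof. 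The subsolution observation ($u_R$ extended by $0$ is a subsolution of \eqref{bcv-eq}) is fine, but the crucial missing ingredient in your sketch is the \emph{uniform $C_0\cap L^1$ supersolution}. For compactly supported $J$ this is the exponential barrier $Ce^{-\alpha|x|}$; for fat-tailed $J$ that barrier fails because $\int J(z)e^{\alpha|z|}\,dz=\infty$, so ``exponential-type decay'' is the wrong expectation. Under \textbf{(H5)} the paper replaces it by a polynomial barrier of the form $C\varphi(x)/(1+\tau|x|^{N+1})$ with $\varphi$ a periodic eigenfunction, and the moment condition $\int J(z)|z|^{N+1}\,dz<\infty$ is exactly what closes the estimate. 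That construction (the Lemma of Section~\ref{bcv-section-ext}) is the real content of the theorem and is absent from your proposal. Finally, your monotonicity of $\lambda_p(\lb R+\beta)$ in $R$ is in the wrong direction (by Proposition~\ref{bcv-prop-pev}(i) it is nonincreasing), and whether $\lim_{R\to\infty}\lambda_p(\lb R+\beta)=\lambda_p(\M+\beta)$ for non-compactly-supported $J$ is explicitly flagged in the paper's conclusion as an open problem — this is precisely why the theorem is stated with the two quantities separately and why (i) and (ii) do not form a dichotomy.
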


\subsection{Remarks on the Principal eigenvalue}

Before going into the proofs of these results, let us comment on the notion of generalised principal eigenvalue. Our  results essentially hinge on the properties of the principal eigenvalue $\lambda_p(\M+a(x))$ and  more precisely on the relations between the following spectral quantities:
\begin{align*} 
&\lambda_p(\m +a):=\sup\left\{\lambda\in\R\, |\, \exists \varphi \in C(\O), \varphi>0,\; \text{ such that }\;\opm{\varphi}(x) +a(x)\varphi(x) +\lambda \varphi(x)\le 0 \quad \text{in}\quad \O \right\}.
\\
&\lambda_p'(\m +a):=\inf\left\{\lambda\in\R\, |\, \exists \varphi \in C(\O)\cap L^{\infty}(\O), \varphi>0,\; \text{ such that }\;\opm{\varphi}(x) +a(x)\varphi(x) +\lambda \varphi(x)\ge 0 \quad \text{in}\quad \O \right\}.\\
&\lambda_v(\m +a):=\inf_{\varphi \in L^2(\R^N),\varphi \not\equiv 0} \frac{\frac{1}{2}\iint_{\R^N\times\R^N}J(x-y)[\varphi(x)-\varphi(y)]^2\,dxdy -\int_{\R^N}a(x)\varphi^2(x)\,dx}{\nlto{\varphi}^2}.
\end{align*}

These quantities have been introduced in various contexts (see for example \cite{Coville2010,Coville2013,2013arXiv1305.7122C,Garcia-Melian2009a,Ignat2012}). However until now,  relations between them have not been fully investigated or only in some particular contexts such as when  $a(x)$ is homogeneous or periodic. 
Some new results have been recently obtained in \cite{Berestycki2014} now allowing us to have a clear description of the relation between $\lambda_p,\lambda_p'$ and $\lambda_v$. Moreover, \cite{Berestycki2014} provides  a  description of the asymptotic behaviour of these spectral quantities with respect to the scaling of the kernel. Since we strongly rely on these results, for the purpose of our analysis, we present a summary of these results in Section \ref{bcv-section-pre}.

Finally, we also want to stress that although we have a clear description of the existence and the non-existence of a positive solution for small $\eps$, the study of the convergence of $u_\eps$ as $\eps \to 0$ is quite delicate. Indeed, it is to be expected that the  
limiting solution will satisfy the problem   
 $$v(x)(a(x)-v(x))=0 \quad \text{in}\quad \R^N.$$
 But this problem has infinitely many bounded non negative solution in $L^{\infty}$.  E.g., for any set $Q\subset \R^N$ the function $a^+(x)\chi_{Q}$ is a solution. Therefore, owing to the lack of regularising effect of the dispersal operator,  we cannot rely on standard compactness results  to obtain a smooth limit.  If for the case $m=2$ we could rely on the elliptic regularity and the new description of Sobolev Spaces developed in \cite{Bourgain2001,Brezis2002,Ponce2004a, Ponce2004} to get some compactness, this characterisation does not allow us to treat the case $m<2$.   We believe that a new characterisation of Fractional Sobolev space in the spirit of the work of Bourgain, Brezis and Mironescu \cite{Bourgain2001,Brezis2002} will be helpful to resolve this issue.

 \medskip

The paper is organised as follows. In Section \ref{bcv-section-bio}, we discuss the biological interpretations of our results. There, we describe notion such as \textit{dispersal budget} and evolutionary stable strategies. In Section \ref{bcv-section-pre}, we recall some known properties and describe our recent  work on the principal eigenvalue $\lambda_p(\lb{\O}+a)$. We also  describe  the sharp persistence criteria for problem \eqref{bcv-eq-bounded} defined in a bounded domain $\O$ that are derived in terms of principal eigenvalues. 
In Sections \ref{bcv-section-crit} and \ref{bcv-section-lgt-beha}, we establish the sharp persistence criteria and prove the long time behaviour of the solution of \eqref{bcv-eq} (Theorem \ref{bcv-thm1}). We analyse the dependence of the persistence criteria  (Theorems \ref{bcv-thm3} and \ref{bcv-thm4}) in  Section \ref{bcv-section-asym}. Finally, in the  Section \ref{bcv-section-ext} we discuss the extension of the persistence criteria to kernels that are non longer assume to be  compactly supported. In the concluding section, we emphasize some of our results and corresponding biological interpretations. We also indicate several open problems and directions that arise naturally from this work.

\subsection{Notations}
To simplify the presentation of the proofs, we introduce some notations and various linear operator that we will use throughout this paper:
\begin{itemize}
\item $B_R(x_0)$  denotes the standard ball of radius $R$ centred at the point $x_0$
\item $\chi_R$ will always refer to the characteristic function of $B_R(0)$ .
\item $\s(\R^N)$ denotes the Schwartz space,\cite{Brezis2010} 
\item $C(\O)$ denotes the space of continuous function in $\O$, 
\item $C_0(\O)$ denotes the Banach space of continuous function in $\O$ that vanishes at the boundary.
\item For a positive integrable function $J\in \s(\R^N)$, the constant $\int_{\R^N}J(z)|z|^2\,dz$ will refer to 
$$\int_{\R^N}J(z)|z|^2\,dz:=\int_{\R^N}J(z)\left(\sum_{i=1}^Nz_i^2\right)\,dz$$
\item We denote by $\lb{\O}$ the continuous linear operator 
\begin{equation}\label{bcv-def-opl}
\begin{array}{rccl}
\lb{\O}:&C(\bar \O)&\to& C(\bar \O)\\
&u&\mapsto& \int_{\O}J(x-y)u(y)\,dy,
\end{array}
\end{equation}
where $\O\subset \R^N$.
\item $\lb{R}$ corresponds to  the continuous operator $\lb{\O}-I$ with $\O=B_R(0)$, ($I$ denotes the identity)
\item We will use  $\m$ to denote the operators $\lb{\O}-I$ with $\O=\R^N$ .
\item Finally,  $\m_\eps$ will denote the operator $\m$ with a rescaled kernel $\frac{1}{\eps^N}J\left(\frac{z}{\eps}\right)$
 and $\m_{\eps,m}:=\frac{1}{\eps^m}\m_\eps$
 \item To simplify the presentation of the proofs, we will also use the notation  $\beta(x):=\partial_sf(x,0)$.
\end{itemize}


\section{Ecological interpretation of our results}\label{bcv-section-bio}
 Here we define more precisely some concepts from ecology and discuss the ecological interpretation of our findings. 
We start with  the notion of \textit{dispersal budget} introduced in \cite{Hutson2003}. To this end, let us go back to the description of the dispersal of the population. The main ecological idea behind this ``\textit{dispersal budget}" is that
 we can consider that the amount of energy per individual that the organism can use to disperse is fixed (because of environmental or developmental constraints).  
 
 
Let us denote by $u$, the density of the population and suppose, as an example, that it represents a population of trees that produces and disperses its seeds.   Several dispersal strategies are then possible for this species: it can ``choose" to disperse few seeds  over long distances or produce many seed and  disperse them over a short distance and, of course, there are the intermediate strategies. We can then assume that the costs involved in the dispersal are proportional to
  \begin{itemize}
  \item the number of individuals dispersed, 
  \item a non decreasing and even function $\alpha$ of the distance moved.
  \end{itemize}
For a population of trees, for instance, the function $\alpha$ can be somehow related, to the amount of energy used to produce  seeds with  sophisticated shapes and components that allow it to  take advantage of wind or gravity  or a transport by animals.

If we discretize uniformly the space $\R^N$ by small cubes of volume $\delta x$ centered at  points $x_i$  and the time in time step $\delta t$,  then we can compute the  cost $C(x_i,x_j)$ associated to the transfer from a site $x_j$ to a site $x_i$ and  we get 
$$C(x_j,x_i)=J(x_j,x_i)\alpha(x_i-x_j)u(x_j,t)(\delta x)^{2} \delta t,$$
where the term  $J(x_j,x_i)u(x_j,t)(\delta x)^{2} \delta t$
is the total number of individuals that is transferred through a dispersal kernel $J$, from a site $x_j$ to a site $x_i$.    
The total cost in time $\delta t$ for a typical site $x_j$ is then: 

$$ u(x_j,t)\delta x\delta t \sum_{i}J(x_j,x_i)\alpha(x_i-x_j) \delta x . $$
Thus, if the amount of energy per individual is fixed, we get 
$$\sum_{i}J(x_i,x_j)\alpha(x_j-x_i) \delta x=c_0$$
where $c_0>0$ is a constant.  
Letting the volume of the cube $\delta x$ go down to $0$, we obtain
$$\int_{\R^N}J(x,y)\alpha(y-x)\,dx=c_0.$$
For an dispersal that only depends on the distance moved, (i.e. $J(x,y)=J(x-y)$), we get
$$\int_{\R^N}J(z)\alpha(z)\,dz=c_0.$$
To investigate the effects of the dispersal range on the persistence of the population, it is then reasonable to fix the cost function $\alpha$ and to allow the dispersal kernel $J$ to depend on a scaling factor, $\eps$.
Now, for a fixed cost function $\alpha(z)$ proportional to $|z|^m$, by taking a rescaled dispersal kernel of the form $\gamma(\eps,m)J_\eps(z):=\frac{\gamma(\eps,m)}{\eps^N}J\left(\frac{z}{\eps}\right)$ we get   
$$ \int_{\R^N}\gamma(\eps,m)J_\eps(z)|z|^m\,dz=c_0.$$ 
Thus,
$$ \gamma(\eps,m)=\frac{1}{\eps^m}\frac{c_0}{\int_{\R^N}J(z)|z|^m\,dz},$$
 and the rescaled dispersal kernel associated with the cost function $\alpha(z)$ proportional to $|y|^m$ is  
 $$ \frac{\alpha_0}{\eps^m} J_\eps(z),
 \quad \text{ with } \quad \alpha_0:=\frac{c_0}{\int_{\R^N}J(z)|z|^m\,dz}.$$
 
 
Fixing the dispersal budget now means that the dispersal process involved is defined by the operator $\M_{\eps,m}=\gamma(\eps,m)\m_\eps$ and that the species has the choice between large $\eps$, which corresponds to a strategy that  produces few offspring that are dispersed  far away  or small $\eps$, which corresponds to the opposite strategy, that is, producing a large number of offspring dispersed on  short  range. 
The rescaled dispersal kernel, will then depend upon three parameters, $\alpha_0$, $\eps$ and $m$. As explained in \cite{Hutson2003}, in the above setting, the constant $\frac{\alpha_0}{\eps^m}$ will refer to the \textit{rate} of dispersal, whereas $\eps$ is a measure of the \textit{range} of dispersal of the species. From the above formula, we clearly see how the cost function and the \textit{range} factor affect the \textit{rate} of dispersal.

The results of Theorems \ref{bcv-thm3}, \ref{bcv-thm4} and  \ref{bcv-thm5}  give some insight on the effects of the cost functions on the different strategies.    When $2>m>0$, we see that  strategies based on  sufficiently large   or sufficiently small \textit{range} factors enable the population  to persist. It is worth to mention that strategies based on  sufficiently large  \textit{range} factors always enable the population to persist when $m>0$. This is not necessarily true when $m=0$.  

To investigate further the effects of the \textit{dispersal budget} on  the different strategies, we can use the notion of 
Evolutionary Stable Strategy (ESS) introduced in Adaptive Dynamics, see \cite{Dieckmann1997,Diekmann2004,Metz1996,Vincent2005}.
  The concept of  ESS  comes from games theory and goes back to the work of Hamilton \cite{Hamilton1967} on the evolution of sex-ratio.  Roughly speaking, an Ecological Stable Strategy is a strategy such that if most of the members of a population adopt it, there is no ``mutant" strategy that would yield a  higher reproductive fitness. In this framework the strategies are compared using their relative pay-off.  This concept has been recently used  and adapted to investigate  ecological stable strategies of dispersal in several  contexts:  unconditional dispersal \cite{Dockery1998,Hastings1983,Hutson2001}, conditional dispersal \cite{Averill2012,Cantrell2006,Cantrell2008,Cantrell2009,Chen2008,Cosner2003,Cosner2005,Hambrock2009} and nonlocal dispersal \cite{Hutson2003,Kao2010}, see \cite{Cosner2014} for a review on this subject.

In these works, the general idea is to compare the dispersal  strategies  through the analysis of some invasion criteria.  
 Here,  following this idea, the strategies can be compared through the dynamics of a solution of a competitive system
\begin{align} 
&\partial_t u(t,x)=\m_{\eps_1,m}[u] + u(t,x)(a(x) -u(t,x) -v(t,x))\quad \text{ in }\quad \R^N\\
&\partial_t v(t,x)=\m_{\eps_2,m}[u] + v(t,x)(a(x) -u(t,x) -v(t,x))\quad \text{ in }\quad \R^N
\end{align}
where $u$ is a population that has adopted the spreading strategy $\eps_1$ and $v$ the $\eps_2$ strategy. The notion of ESS is then linked to some invasion condition  which is related to the stability of the equilibria $(u^*,0)$ where $u^*$ is a positive solution of the following problem 
$$\m_{\eps_1,m}[u] + u(x)(a(x) -u(x))=0\quad \text{ in }\quad \R^N.$$ 

The stability analysis of this equilibria, leads us to consider the sign of a principal eigenvalue of the operator $\m_{\eps_2,m} + a -u^*$. When $\lambda_p(\m_{\eps_2,m} + a -u^* )<0$  then the equilibria $(u^*,0)$ is unstable and a mutant may overtake the territory. Therefore, the strategy followed by $u$ will not be an ESS. On the contrary, when $\lambda_p(\m_{\eps_2,m} + a -u^* )>0$  the equilibria $(u^*,0)$ is stable and a mutant cannot invade its territory, making this strategy a possible candidate for an ESS.

In the context of local dispersion, this system has been introduced to discuss Ecological Stable Strategy of dispersal (see \cite{Averill2012,Cantrell2006,Cantrell2009,Chen2008,Cosner2003,Cosner2005,Dockery1998,Hastings1983,Hambrock2009}). The only difference is that, in this case, $\m_{eps_i,m}$ is an elliptic operator possibly involving an advection ($1^{st}$ order) term. 

 In this framework, two classes of dispersal strategies are distinguished: the unconditional dispersal vs the conditional dispersal.  Unconditional dispersal refers to dispersal without regard to the environment or the presence of other organisms. Pure diffusion and diffusion with physical advection (e.g. due to local climatic conditions) are examples of unconditional dispersal. Conditional dispersal refers to dispersal that is influenced by the environment or the presence of other organisms.
  
For particular conditional dispersal strategies, known as \textit{ideal free}, it is known that such strategies are evolutionary stable \cite{Cantrell2008,Cantrell2009,Cosner2005,Cosner2014}.  For long range dispersal, some nonlocal ideal free strategies have been recently exhibited \cite{Cosner2012}.

For unconditional dispersal strategies, within the framework of reaction diffusion models, it is known that the smaller disperser is always favoured \cite{Cantrell2009,Dockery1998,Hastings1983}. Such results are still valid for nonlocal dispersal strategies as soon as the cost function is a constant \cite{Hutson2003,Kao2010}. In such cases, the \textit{range} factor does not affect the \textit{rate} of dispersal.

For cost functions proportional to $|y|^m$ with $m>0$, the \textit{range} factor $\eps$ strongly  affects  the \textit{rate} of dispersal,  and the picture changes. From the asymptotics we have obtained (Theorems \ref{bcv-thm4} and \ref{bcv-thm5}), we  clearly see that the smaller $\textit{spreader}$ will not always be favoured. Indeed, consider two  species that have the same ecological niche  and suppose that  this ecological niche is bounded.

Let $\eps_1$ be the \textit{range} factor associated to one of the species and let us denote by $u^*$ the equilibrium reached by this population, i.e.  $u^*>0$ (or $u^*=0$ if there is no positive equilibrium) is the solution of $\m_{\eps_1,m}[u^*]+u^*(a(x)-u^*)=0$. Let $\eps_2$ be the \textit{range} factor associated with the other species. Let us look at the sign of $\lambda_p(\m_{\eps_2,m}+a -u^*)$. From our results, for $\eps_2$ large enough, we have 
 $ \lambda_p(\m_{\eps_2,m}+a -u^*)\approx -\sup_{\R^N}(a-u^*)$. Since $u^*$ satisfies $\m_{\eps_1,m}[u^*]+u^*(a(x)-u^*)=0,$  by the maximum principle, we infer that $\sup_{\R^N}(a-u^*)>0$. As a consequence, for $\eps_2$ large enough,  $\lambda_p(\m_{\eps_2,m}+a -u^*)<0$ and the population having the \textit{range} factor $\eps_1$ will be wiped out. In other words, if a competing species disperses on a sufficiently long range, it will invade the territory occupied previously by the species of \textit{range} factor $\eps_1$. Conversely, when the cost functions is sub-quadradic, i.e. $m<2$,    for $\eps_2$ small enough,  we  have also  $ \lambda_p(\m_{\eps_2,m}+a -u^*)\approx -\sup_{\R^N}(a-u^*)$.
 This means that a smaller spreading factor can also lead to invasion. For a cost function proportional to $|y|^m$ with $0<m<2$, the effect of the cost function is then  twofold, both large and small \textit{spreader} species can be favoured.

From our result, we  also infer  that within the framework of the space of  a quadratic cost function $(m=2)$, the ubiquity strategy  ($\eps=\infty$) seems to be an ESS. Indeed, for this case,  we are led to consider the sign of $\lambda_p(\m_{\eps_2,2} + a -a^+ )$ which is positive for any $\eps>0$. It is worth noticing that in this situation, the smallest spreader ($\eps =0$) is never an ESS. For such  singular strategy, what matter is the sign of $\lambda_p(\m_{\eps_2,2} + a -u^* )$ where $u^*$ is the solution of 
$$\frac{D_2(J)}{2N}\Delta u^* +u^*(a(x)-u^*)=0.$$
For $\eps_2$ large enough, we see that $\lambda_p(\m_{\eps_2,2} + a -u^* )<0$. Thus the equilibrium $(u^*,0)$ is unstable, rendering the singular strategy not evolutionary stable.

Such behaviour  stands  in contrast with known results on ESS strategy  governed by the rate of dispersion \cite{Hutson2003,Kao2010}. These properties show that nonlocal diffusion exhibits quite different behaviour with respect to what was known in the local diffusion case, where in such case the slowest possible rate  is always the best strategy.  




\section{Preliminaries}\label{bcv-section-pre}

 In this section, we recall some known results on the principal eigenvalue of a linear non-local operator $\lb{\O}+a$  and on  the nonlocal Fisher-KPP equation :
 \begin{equation}\label{bcv-eq-kpp}
 \frac{\partial u}{\partial t}(t,x)=\oplb{u}{\O}+f(x,u(t,x)) \quad \text{ in }\quad \R^+\times\O,
 \end{equation}
 considered in a bounded domain  $\O\subset \R^N$.

\subsection{Principal eigenvalue for non-local operators}
In this subsection, we focus on the properties of the spectral problem 

\begin{equation}\label{bcv-eq-pev}
\oplb{\varphi}{\O}+a\varphi+\lambda \varphi=0 \quad\text{ in }\quad \O.
\end{equation}

In contrast with elliptic operators, when $a$ is not a constant,  neither $\lb{\O} +a+\lambda$ nor its inverse  are  compact operators and the description of the spectrum of $\lb{\O} +a$  using  the Krein-Rutman Theory fails. However as shown in \cite{Coville2010},  some variational formula introduced in  \cite{Berestycki1994} to characterise  the first eigenvalue of elliptic operators $\E:=a_{ij}(x)\partial_{ij}+b_i(x)\partial_i +c(x)$,
\begin{equation}\label{bcv-eq-lambda1-form-sup}
 \lambda_1 (\E) := \sup \{\lambda \in \R \, |\, \exists \, \varphi \in W^{2,n}(\O), \varphi > 0 \;\text{ so that }\; \ope{\varphi} + \lambda\varphi\le 0\},
 \end{equation}
can be transposed to the operator $\lb{\O}+a$. Namely, we define the quantity

\begin{equation}\label{bcv-eq-lambda-form-sup}
 \lambda_p (\lb{\O} + a) := \sup \{\lambda \in \R \, |\, \exists \, \varphi \in C(\O), \varphi > 0 \;\text{ so that }\; \oplb{\varphi}{\O}+ a\varphi + \lambda\varphi\le 0\}.
 \end{equation}
 $\lambda_p(\lb{\O} +a)$ is well defined and we call it the generalised principal eigenvalue. 
 
 As in \cite{Coville2010},   the quantity  defined by \eqref{bcv-eq-lambda-form-sup} is  not always  an eigenvalue of $\lb{\O}+a$ in a reasonable Banach space.  This means that there is not always a positive continuous eigenfunction associated with $\lambda_p$. This stands in contrast with elliptic PDE's.   
However, as proved in \cite{Coville2010,Kao2010,Shen2012}, when $\O$ is a bounded domain we can give some conditions on the coefficients that guarantee  the existence of a positive continuous eigenfunction. For example,  if   the function $a$ satisfies
 $$\frac{1}{\sup_{\O}a -a}\not\in L_{loc}^1 (\bar \O ),  $$
 then $\lambda_p(\lb{\O}+a)$ is an eigenvalue of $\lb{\O}+a$ in  the Banach space $C(\bar\O)$ that is, it is associated to a positive continuous eigenfunction, continuous up to the boundary. 

Another useful criteria  that guarantees the  existence of a continuous principal eigenfunction is  
\begin{proposition} \label{bcv-prop-phip}
 Let $\O$ be a bounded domain and let $\lb{\O}$ be as in \eqref{bcv-def-opl} then there exists a positive continuous eigenfunction associated to 
$\lambda_p$ if and only if $\lambda_p(\lb{\O}+a(x))< -\sup_{\O}a$. 
 \end{proposition}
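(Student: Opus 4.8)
The plan is to prove both implications of the equivalence $\lambda_p(\lb{\O}+a)<-\sup_\O a \iff$ there is a positive continuous eigenfunction. The key observation that drives everything is that the operator $\lb{\O}+a$ can be written as $\lb{\O}+(a-\sup_\O a)+\sup_\O a$, so that $\mu:=\lambda_p(\lb{\O}+a)+\sup_\O a$ is the generalised principal eigenvalue of $\lb{\O}+\tilde a$ with $\tilde a:=a-\sup_\O a\le 0$, and the question becomes whether $\mu<0$. Equivalently, setting $\nu:=-\lambda_p(\lb{\O}+a)$, the condition $\lambda_p<-\sup_\O a$ reads $\nu>\sup_\O a$, i.e. the function $x\mapsto \nu-a(x)$ is bounded below away from $0$; then the operator $(\nu+a-\text{multiplication})$ is invertible as a multiplication operator and one can rewrite the eigenvalue equation $\oplb{\varphi}{\O}+a\varphi=-\nu\varphi$ as the fixed-point problem
\begin{equation}\label{bcv-eq-fixedpt-phip}
\varphi(x)=\frac{1}{\nu-a(x)}\int_{\O}J(x-y)\varphi(y)\,dy=:\T[\varphi](x),
\end{equation}
where $\T$ is a positive bounded linear operator on $C(\bar\O)$ (bounded because $\nu-a(x)\ge \nu-\sup_\O a>0$ and $J\in L^1$, $\O$ bounded). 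So the task reduces to showing that $\T$ has a positive eigenfunction with eigenvalue $1$, and that this happens precisely when $\nu=-\lambda_p(\lb{\O}+a)$.

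For the implication ``$\Leftarrow$'' (existence of a continuous eigenfunction implies the strict inequality): if $\varphi_p\in C(\bar\O)$, $\varphi_p>0$ solves $\oplb{\varphi_p}{\O}+a\varphi_p+\lambda_p\varphi_p=0$, then evaluating at a point $\bar x$ where $a$ is close to $\sup_\O a$ (or taking a supremum/ infimum argument) gives $(\lambda_p+a(\bar x))\varphi_p(\bar x)=-\oplb{\varphi_p}{\O}(\bar x)<0$ since $J(0)>0$ by (H2) forces $\oplb{\varphi_p}{\O}>0$ on $\bar\O$; hence $\lambda_p+a(\bar x)<0$ for every such point, and letting $a(\bar x)\to\sup_\O a$ yields $\lambda_p\le -\sup_\O a$. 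To get the strict inequality one uses the positivity margin: $\oplb{\varphi_p}{\O}(\bar x)\ge \delta>0$ uniformly on $\bar\O$ by continuity and compactness, while $\varphi_p$ is bounded above, so $\lambda_p+a(\bar x)\le -\delta/\|\varphi_p\|_\infty<0$, giving $\lambda_p<-\sup_\O a$ after passing to the sup in $a$.

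For the implication ``$\Rightarrow$'' (the strict inequality produces a continuous eigenfunction): under $\nu^*:=-\lambda_p<-\sup_\O a$... wait, rather $\nu^*>\sup_\O a$, consider the family of operators $\T_\nu[\varphi](x)=\frac{1}{\nu-a(x)}\oplb{\varphi}{\O}(x)$ for $\nu>\sup_\O a$. Each $\T_\nu$ is a positive, compact operator on $C(\bar\O)$: compactness follows because $J$ is continuous, so $\oplb{\cdot}{\O}$ maps bounded sets to equicontinuous families (Arzel\`a--Ascoli), and $\O$ is bounded. By the Krein--Rutman theorem $\T_\nu$ has a principal eigenvalue $r(\nu)=\rho(\T_\nu)>0$ with a positive eigenfunction $\varphi_\nu\in C(\bar\O)$. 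One then shows $r(\nu)$ is continuous and strictly decreasing in $\nu$ (because $\T_\nu$ is pointwise strictly decreasing in $\nu$ and the operators are irreducible thanks to (H2) together with positivity of $J$ near $0$), that $r(\nu)\to 0$ as $\nu\to+\infty$, and that $r(\nu)\to +\infty$ (or at least $r(\nu)>1$) as $\nu\downarrow\sup_\O a$. The key identity linking $r(\nu)$ to $\lambda_p$ is: $r(\nu)=1 \iff -\nu$ is attained in the sup defining $\lambda_p$, equivalently $\nu=-\lambda_p$; more precisely one proves $\lambda_p(\lb{\O}+a)=-\inf\{\nu>\sup_\O a : r(\nu)\le 1\}$ by comparing the variational characterisations — a test function $\varphi>0$ with $\oplb{\varphi}{\O}+a\varphi+\lambda\varphi\le 0$ is exactly a supersolution $\T_{-\lambda}[\varphi]\le \varphi$, which by the Krein--Rutman/ sub-supersolution theory forces $r(-\lambda)\le 1$. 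Hence if $\lambda_p<-\sup_\O a$ strictly, then $\nu^*=-\lambda_p>\sup_\O a$ lies in the open range where $r$ is finite and continuous, and by the monotone-limit behaviour $r(\nu^*)=1$ exactly; the corresponding $\varphi_{\nu^*}\in C(\bar\O)$, $\varphi_{\nu^*}>0$ is then the desired principal eigenfunction of $\lb{\O}+a$.

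The main obstacle I anticipate is the ``$\Rightarrow$'' direction, specifically establishing the boundary behaviour $r(\nu)\to +\infty$ (or $>1$) as $\nu\downarrow\sup_\O a$ and the exact matching $r(-\lambda_p)=1$: one must rule out the degenerate scenario in which $\sup_{\nu>\sup_\O a} r(\nu)\le 1$, which is precisely the case $\lambda_p\ge -\sup_\O a$ where no continuous eigenfunction exists — so the equivalence is sharp and this estimate is doing the real work. This is handled by choosing a good test function supported near a point where $a$ nearly attains $\sup_\O a$ and using $J(0)>0$ to get a lower bound on $r(\nu)$ that blows up as $\nu-\sup_\O a\to 0^+$; continuity and strict monotonicity of $r$ then give a unique crossing $r(\nu^*)=1$ with $\nu^*=-\lambda_p$. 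Everything else (compactness via Arzel\`a--Ascoli, positivity via (H2), Krein--Rutman) is standard.
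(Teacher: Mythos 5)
Your framework — rewriting the eigenvalue equation as a fixed point $\varphi = \mathcal{T}_\nu[\varphi]$ with $\mathcal{T}_\nu[\varphi](x) = (\nu - a(x))^{-1}\oplb{\varphi}{\O}(x)$, extracting compactness by Arzel\`a--Ascoli, applying Krein--Rutman, and tracking the spectral radius $r(\nu)$ as a continuous strictly decreasing function on $(\sup_\O a, +\infty)$ — is exactly the approach used in the Coville references that the paper cites for this statement. Your $``\Leftarrow"$ argument is also correct: the uniform lower bound $\oplb{\varphi_p}{\O}\ge\delta>0$ on the compact $\bar\O$ (supplied by $J(0)>0$, continuity and compactness), together with $\varphi_p\le\|\varphi_p\|_\infty$, gives $\lambda_p+a(x)\le-\delta/\|\varphi_p\|_\infty<0$ uniformly, hence $\lambda_p<-\sup_\O a$.

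The $``\Rightarrow"$ direction, however, contains a genuine error. You propose to establish an \emph{unconditional} blow-up $r(\nu)\to+\infty$ as $\nu\downarrow\sup_\O a$ via a test function concentrated near a near-maximizer of $a$. This blow-up does not hold in general: its failure is precisely the degenerate scenario $\lambda_p(\lb{\O}+a)=-\sup_\O a$ in which no continuous eigenfunction exists — for instance when $1/(\sup_\O a-a)\in L^1_{loc}(\bar\O)$, e.g.\ $a(x)=\sup_\O a-c|x-x_0|^\alpha$ near a unique maximizer $x_0$ with $0<\alpha<N$; compare the paper's remark immediately preceding the proposition, which singles out non-integrability of $1/(\sup_\O a-a)$ as a \emph{sufficient} (not automatic) condition for the eigenfunction to exist. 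If your estimate were valid, $r(\nu^*)=1$ would always have a solution $\nu^*>\sup_\O a$ and the proposition would collapse, both sides of the equivalence being always true. The fix is that you should not (and cannot) rule out the degenerate scenario by an estimate; the hypothesis $\lambda_p<-\sup_\O a$ already does so. Concretely, set $\nu^*:=-\lambda_p>\sup_\O a$. For $\nu\in(\sup_\O a,\nu^*)$, the value $-\nu\in(\lambda_p,-\sup_\O a)$ is not admissible in the definition of $\lambda_p$; but the Krein--Rutman eigenfunction $\psi_\nu>0$ of $\mathcal{T}_\nu$ satisfies $\mathcal{T}_\nu[\psi_\nu]=r(\nu)\psi_\nu$, so $r(\nu)\le1$ would make $-\nu$ admissible — contradiction, hence $r(\nu)>1$ on this interval. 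For $\nu>\nu^*$, $-\nu<\lambda_p$ is admissible, so some $\varphi>0$ supersolves $\mathcal{T}_\nu$ and $r(\nu)\le1$. Continuity of $r$ on the open interval $(\sup_\O a,+\infty)$ then forces $r(\nu^*)=1$, and $\psi_{\nu^*}$ is the desired principal eigenfunction. Once the blow-up claim is replaced by this direct use of the hypothesis, the proof is complete.
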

 A proof of this proposition can be found for example in \cite{Coville2013,2013arXiv1305.7122C}. To have a more complete description of the properties of $\lambda_p$ in bounded domains see   \cite{Coville2013a}.

  Next, we recall some properties of  $\lambda_p$ that we  constantly use throughout this paper:
 \begin{proposition}\label{bcv-prop-pev}
\begin{itemize}
\item[(i)] Assume $\O_1\subset\O_2$, then for the two operators $\lb{\O_1}+a$ and $\lb{\O_2}+a$
respectively defined on $C(\O_1)$ and $C(\O_2)$, we have :
 $$
\lambda_p(\lb{\O_1}+a)\ge \lambda_p(\lb{\O_2}+a).
$$
\item[(ii)]For a fixed $\O$  and assume that $a_1(x)\ge a_2(x)$, for all $x \in \O$. Then  
$$
\lambda_p(\lb{\O}+a_2)\ge\lambda_p(\lb{\O}+a_1).
$$

\item[(iii)] $\lambda_p(\lb{\O}+a)$ is Lipschitz continuous with respect to  $a$. More precisely,
$$|\lambda_p(\lb{\O}+a)- \lambda_p(\lb{\O}+b)|\le \|a-b\|_{\infty}$$

\item[(iv)]  The following estimate always holds
$$-\sup_{\O}\left(a(x)+\int_{\O}J(x-y)\,dy\right)\le \lambda_p(\lb{\O}+a)\le -\sup_{\O}a.$$
\end{itemize}
\end{proposition}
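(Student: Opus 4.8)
The plan is to obtain all four assertions directly from the variational definition \eqref{bcv-eq-lambda-form-sup}, using only that $J\ge 0$, that admissible test functions are strictly positive (so inequalities may be divided by $\varphi$ and integrals are monotone in the domain of integration), and an elementary shift identity. Call $\lambda$ \emph{admissible} for $\lb{\O}+a$ if there is $\varphi\in C(\O)$ with $\varphi>0$ and $\oplb{\varphi}{\O}(x)+a(x)\varphi(x)+\lambda\varphi(x)\le 0$ in $\O$; then $\lambda_p(\lb{\O}+a)$ is by definition the supremum of the set of admissible $\lambda$.

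For (i), given $\lambda$ admissible for $\lb{\O_2}+a$ with witness $\varphi\in C(\O_2)$, restrict $\varphi$ to $\O_1$. Since $J\ge 0$ and $\varphi>0$, for $x\in\O_1$ one has $\oplb{\varphi}{\O_1}(x)=\int_{\O_1}J(x-y)\varphi(y)\,dy\le\int_{\O_2}J(x-y)\varphi(y)\,dy=\oplb{\varphi}{\O_2}(x)$, so $\varphi|_{\O_1}\in C(\O_1)$ witnesses admissibility of the same $\lambda$ on $\O_1$. Hence the admissible set for $\O_1$ contains that for $\O_2$, and taking suprema gives $\lambda_p(\lb{\O_1}+a)\ge\lambda_p(\lb{\O_2}+a)$. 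For (ii), if $a_1\ge a_2$ on $\O$ and $\varphi>0$ witnesses admissibility of $\lambda$ for $\lb{\O}+a_1$, then $a_2\varphi\le a_1\varphi$ yields $\oplb{\varphi}{\O}+a_2\varphi+\lambda\varphi\le\oplb{\varphi}{\O}+a_1\varphi+\lambda\varphi\le 0$, so $(\varphi,\lambda)$ is admissible for $\lb{\O}+a_2$; hence $\lambda_p(\lb{\O}+a_2)\ge\lambda_p(\lb{\O}+a_1)$.

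For (iii), I would first record the shift identity $\lambda_p(\lb{\O}+a+t)=\lambda_p(\lb{\O}+a)-t$ for every constant $t\in\R$: the inequality $\oplb{\varphi}{\O}+(a+t)\varphi+\lambda\varphi\le 0$ is literally the same as $\oplb{\varphi}{\O}+a\varphi+(\lambda+t)\varphi\le 0$, so $\lambda$ is admissible for $a+t$ iff $\lambda+t$ is admissible for $a$. Then from $a\le b+\|a-b\|_{\infty}$ pointwise, monotonicity (ii) and the shift identity give $\lambda_p(\lb{\O}+a)\ge\lambda_p(\lb{\O}+b+\|a-b\|_{\infty})=\lambda_p(\lb{\O}+b)-\|a-b\|_{\infty}$; exchanging $a$ and $b$ gives the reverse bound, and the two together are exactly $|\lambda_p(\lb{\O}+a)-\lambda_p(\lb{\O}+b)|\le\|a-b\|_{\infty}$.

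For (iv), the upper bound follows by testing against an arbitrary admissible $\varphi$: since $\oplb{\varphi}{\O}(x)\ge 0$, the defining inequality forces $(a(x)+\lambda)\varphi(x)\le 0$, hence $a(x)+\lambda\le 0$ for every $x\in\O$ upon dividing by $\varphi(x)>0$; thus $\lambda\le-\sup_\O a$ for every admissible $\lambda$, and so does the supremum. For the lower bound I would simply take $\varphi\equiv 1$: the defining inequality then reads $\int_\O J(x-y)\,dy+a(x)+\lambda\le 0$ in $\O$, i.e.\ $\lambda\le-\sup_\O\big(a(x)+\int_\O J(x-y)\,dy\big)$, so this value is admissible and hence a lower bound for $\lambda_p(\lb{\O}+a)$. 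None of these steps presents a genuine obstacle — each is a one-line consequence of the positivity of $J$ and $\varphi$; the only point worth a remark is that, when $\O$ is unbounded, admissible test functions should be understood to be those for which $\int_\O J(x-y)\varphi(y)\,dy$ is finite, which is automatic for the restricted witnesses used in (i) and for the constant function used in (iv), so it never interferes with the arguments.
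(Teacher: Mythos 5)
The paper does not reproduce a proof of this proposition, referring instead to \cite{Coville2010,2013arXiv1305.7122C}, so there is no internal argument to compare against. Your argument is correct and complete: the monotonicity claims (i), (ii) and the two bounds in (iv) are obtained exactly as one should, by comparing the sets of admissible pairs $(\lambda,\varphi)$ in the sup-defining formula \eqref{bcv-eq-lambda-form-sup} (using $J\ge 0$, $\varphi>0$, and the constant test function for the lower bound in (iv)); and (iii) is correctly reduced to (ii) via the shift identity $\lambda_p(\lb{\O}+a+t)=\lambda_p(\lb{\O}+a)-t$ together with the pointwise bound $a\le b+\|a-b\|_\infty$ and its symmetric counterpart. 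Your parenthetical remark on unbounded $\O$ is also the right observation: the only test functions you actually need (restrictions of already-admissible witnesses, and $\varphi\equiv 1$) automatically make $\oplb{\varphi}{\O}$ finite since $J$ has unit mass. Nothing is missing.
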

We refer to \cite{Coville2010,2013arXiv1305.7122C} for the proofs of $(i)-(iv)$. Let us also recall the two following  results proved in \cite{Berestycki2014}.
\begin{lemma}\label{bcv-lem-lim}
 Assume that $a$ achieves its maximum in $\O$ and let  $\lb{\O}+a$ be defined as in \eqref{bcv-def-opl} with $J$ satisfying $(H1-H2)$. Assume further that $J$ is compactly supported.
Let $(\O_n)_{n \in \R}$ be a sequence of subset of $\O$ such that $\lim_{n\to \infty}\O_n =\O$, $\O_n \subset \O_{n+1}$.
Then, we have 
$$ \lim_{n\to \infty}\lambda_p(\lb{\O_n}+a)=\lambda_p(\lb{\O}+a)$$ 
\end{lemma}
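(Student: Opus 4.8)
The plan is to establish the two inequalities $\liminf_{n\to\infty}\lambda_p(\lb{\O_n}+a)\ge \lambda_p(\lb{\O}+a)$ and $\limsup_{n\to\infty}\lambda_p(\lb{\O_n}+a)\le \lambda_p(\lb{\O}+a)$ separately; the first is elementary and the second is where the hypotheses (compact support of $J$, attainment of the maximum of $a$) come into play. For the easy direction, note that $\O_n\subset\O_{n+1}\subset\O$, so by the monotonicity property (i) of Proposition \ref{bcv-prop-pev} the sequence $n\mapsto\lambda_p(\lb{\O_n}+a)$ is non-increasing, hence convergent in $\R\cup\{-\infty\}$, and every term satisfies $\lambda_p(\lb{\O_n}+a)\ge\lambda_p(\lb{\O}+a)$. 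Thus $\lim_{n\to\infty}\lambda_p(\lb{\O_n}+a)\ge\lambda_p(\lb{\O}+a)$, and in particular the limit is finite since by (iv) it is bounded below by $-\sup_\O(a+\int_\O J)$. It remains to prove the reverse inequality.

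For the hard direction, set $\lambda_\infty:=\lim_{n\to\infty}\lambda_p(\lb{\O_n}+a)$ and suppose, for contradiction, that $\lambda_\infty>\lambda_p(\lb{\O}+a)$. The idea is to produce, from the test functions realising $\lambda_p(\lb{\O_n}+a)$ on the increasing subdomains, an admissible test function for $\lambda_p(\lb{\O}+a)$ with parameter arbitrarily close to $\lambda_\infty$, contradicting the definition of $\lambda_p(\lb{\O}+a)$ as a supremum. Concretely: because $a$ attains its maximum in $\O$ and $\lambda_\infty<-\sup_\O a$ may or may not hold, one distinguishes cases. If $\lambda_p(\lb{\O}+a)=-\sup_\O a$ one argues directly from (iv) applied on $\O_n$ (for $n$ large, $\sup_{\O_n}a$ is close to $\sup_\O a$ since the maximum is attained at an interior point that eventually lies in all $\O_n$), forcing $\lambda_p(\lb{\O_n}+a)\to -\sup_\O a$. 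Otherwise $\lambda_p(\lb{\O}+a)<-\sup_\O a$, and by Proposition \ref{bcv-prop-phip} (applied on bounded subdomains, or its analogue used in \cite{Berestycki2014}) each $\lambda_p(\lb{\O_n}+a)$, once $n$ is large enough that $\lambda_p(\lb{\O_n}+a)<-\sup_{\O_n}a$, is a genuine eigenvalue with a positive continuous eigenfunction $\varphi_n$ on $\bar\O_n$. Normalising $\varphi_n$ (say $\varphi_n(x_0)=1$ at a fixed point $x_0$ lying in all $\O_n$ near the maximum of $a$), one uses the compact support of $J$: the equation $\lb{\O_n}[\varphi_n]-\varphi_n+a\varphi_n+\lambda_p(\lb{\O_n}+a)\varphi_n=0$ couples values of $\varphi_n$ only within distance $r:=\mathrm{diam}(\mathrm{supp}\,J)$, which yields a Harnack-type propagation of the normalisation and hence local uniform bounds on $\varphi_n$ on any fixed compact subset of $\O$. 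Extracting a locally uniformly convergent subsequence $\varphi_n\to\varphi$, the limit $\varphi$ is continuous, positive, and satisfies $\lb{\O}[\varphi]-\varphi+a\varphi+\lambda_\infty\varphi\le 0$ in $\O$ (the integral over $\O_n$ converges to the integral over $\O$ by monotone convergence since $\varphi\ge 0$ and $\O_n\uparrow\O$). This makes $\varphi$ an admissible test function showing $\lambda_p(\lb{\O}+a)\ge\lambda_\infty$, the desired contradiction.

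The main obstacle is the compactness step: obtaining locally uniform bounds on the eigenfunctions $\varphi_n$ without any regularising effect from the operator. This is precisely where compact support of $J$ and the attainment of the maximum of $a$ are essential — compact support gives a finite-range coupling allowing one to chain the normalisation across $\O$ via the defining equation (a nonlocal Harnack inequality), and the attainment of the maximum guarantees that $-\sup_{\O_n}a\to-\sup_\O a$ so that the eigenvalue regime $\lambda_p<-\sup a$ is stable along the sequence, keeping the $\varphi_n$ genuine eigenfunctions. Once the local bounds are in hand, passing to the limit in the eigenvalue equation or inequality is routine. I would carry this out by first recording the monotone, easy inequality, then treating the boundary case $\lambda_p(\lb{\O}+a)=-\sup_\O a$ via (iv), and finally doing the eigenfunction extraction argument in the remaining case, citing \cite{Berestycki2014} for the nonlocal Harnack estimate if it is available there.
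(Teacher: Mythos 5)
The paper does not actually prove Lemma \ref{bcv-lem-lim}; it is cited from \cite{Berestycki2014} as a preliminary. So a direct comparison is not possible, and I assess your proposal on its own terms.

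Your first direction and your treatment of the boundary case are fine. By (i) of Proposition \ref{bcv-prop-pev} the sequence $\lambda_p(\lb{\O_n}+a)$ is non\nobreakdash-increasing and each term is $\ge\lambda_p(\lb{\O}+a)$, so the limit $\lambda_\infty$ exists and $\lambda_\infty\ge\lambda_p(\lb{\O}+a)$. When $\lambda_p(\lb{\O}+a)=-\sup_\O a$, the fact that the maximum of $a$ is attained at a point lying in $\O_n$ for $n$ large gives $\sup_{\O_n}a=\sup_\O a$, and (iv) gives $\lambda_p(\lb{\O_n}+a)\le-\sup_\O a$, so the two bounds pinch and the conclusion follows. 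All correct.

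The gap is in the remaining case $\lambda_p(\lb{\O}+a)<-\sup_\O a$. You write ``once $n$ is large enough that $\lambda_p(\lb{\O_n}+a)<-\sup_{\O_n}a$'' as if this is guaranteed, but it is not: the non\nobreakdash-increasing sequence $\lambda_p(\lb{\O_n}+a)$ is squeezed into the interval $[\lambda_p(\lb{\O}+a),-\sup_\O a]$ and nothing you have said rules out that it is identically equal to $-\sup_\O a=-\sup_{\O_n}a$ for all large $n$, which is exactly the case where Proposition \ref{bcv-prop-phip} says there is \emph{no} continuous eigenfunction to extract. Your case distinction is on the value of $\lambda_p(\lb{\O}+a)$, but the dichotomy that matters for the eigenfunction extraction is the value of $\lambda_\infty$ (resp.\ $\lambda_p(\lb{\O_n}+a)$), and the scenario $\lambda_\infty=-\sup_\O a>\lambda_p(\lb{\O}+a)$ is left unaddressed -- that is precisely the contradiction scenario you need to refute. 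A related and compounding problem is that the nonlocal Harnack inequality you invoke for equi\nobreakdash-boundedness of the $\varphi_n$ carries a constant that blows up as the ``potential'' $-\lambda_p(\lb{\O_n}+a)-a(x)$ approaches $0$ on its zero set; a uniform Harnack constant requires $\lambda_p(\lb{\O_n}+a)+\sup_\O a$ to stay bounded away from zero, which is again exactly what fails in the problematic sub\nobreakdash-case. To make the proposal rigorous you would need to either establish that $\lambda_p(\lb{\O_n}+a)=-\sup_{\O_n}a$ for all $n$ already forces $\lambda_p(\lb{\O}+a)=-\sup_\O a$, or abandon the eigenfunction route. A cleaner alternative, which avoids the whole issue and likely reflects the intended argument given that the cited source is devoted to establishing $\lambda_p=\lambda_p'=\lambda_v$ (Theorems \ref{bcv-thm6}, \ref{bcv-thm7}), is to work with the variational quantity $\lambda_v$: extending test functions on $\O_n$ by zero shows $\lambda_v(\lb{\O_n}+a)\ge\lambda_v(\lb{\O}+a)$, truncating any $L^2(\O)$ test function to $\O_n$ and using $L^2$\nobreakdash-continuity of the Rayleigh quotient gives the reverse inequality on the infimum, and then one invokes $\lambda_p=\lambda_v$. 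That route needs no eigenfunctions, no Harnack inequality, and no case split, and it explains why compact support of $J$ (the hypothesis of the equivalence $\lambda_p=\lambda_v$) is assumed.
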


\begin{lemma}\label{bcv-lem-scal-eq}
Assume  that $a(x)\in C(\R^N)\cap L^{\infty}(\R^N)$. Then for all $\eps>0$ 
 $$\lambda_p(\M+a)=\lambda_p(\M_{\eps}+a_\eps),$$
where $a_\eps(x):=a\left(\frac{x}{\eps}\right)$ and $\M_\eps[\varphi](x):=\frac{1}{\eps^N}\int_{\R^N}J\left(\frac{x-y}{\eps}\right)\varphi(y)\,dy -\varphi(x)$.
\end{lemma}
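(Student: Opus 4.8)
The identity is a pure scaling (change of variables) statement, and the natural strategy is to exhibit an explicit bijection between the families of test functions entering the two variational definitions of $\lambda_p$. Since both problems are posed on $\R^N$, there are no boundary conditions to track, which is what makes the argument clean.

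The first step is to record the pointwise conjugation identity. Given $\varphi \in C(\R^N)$ with $\varphi>0$, set $\psi(x):=\varphi(x/\eps)$. Then $\psi \in C(\R^N)$, $\psi>0$, and performing the substitution $y=\eps z$ in the convolution integral gives
$$
\M_\eps[\psi](x)=\frac{1}{\eps^N}\int_{\R^N}J\!\left(\frac{x-y}{\eps}\right)\varphi\!\left(\frac{y}{\eps}\right)dy-\varphi\!\left(\frac{x}{\eps}\right)=\int_{\R^N}J\!\left(\frac{x}{\eps}-z\right)\varphi(z)\,dz-\varphi\!\left(\frac{x}{\eps}\right)=\M[\varphi]\!\left(\frac{x}{\eps}\right).
$$
Consequently, for any $\lambda\in\R$,
$$
\M_\eps[\psi](x)+a_\eps(x)\psi(x)+\lambda\psi(x)=\Big(\M[\varphi]+a\varphi+\lambda\varphi\Big)\!\left(\frac{x}{\eps}\right)\qquad\text{for all }x\in\R^N.
$$
(If $\varphi$ is such that $J\star\varphi(x_0)=+\infty$ at some point, the corresponding test inequality fails for $\M+a$ at $x_0$ and, by the same substitution, for $\M_\eps+a_\eps$ at $\eps x_0$, so such $\varphi$ are irrelevant on both sides and the substitution above is legitimate whenever the inequality can possibly hold.)

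The second step is to observe that $\varphi\mapsto\psi=\varphi(\cdot/\eps)$ is a bijection of the cone $\{\varphi\in C(\R^N):\varphi>0\}$ onto itself, with inverse $\psi\mapsto\psi(\eps\,\cdot)$, and that $x\mapsto x/\eps$ is a bijection of $\R^N$. Hence the inequality $\M_\eps[\psi]+a_\eps\psi+\lambda\psi\le0$ holds on all of $\R^N$ if and only if $\M[\varphi]+a\varphi+\lambda\varphi\le0$ holds on all of $\R^N$. Therefore a real number $\lambda$ admits an admissible positive continuous test function for $\M_\eps+a_\eps$ precisely when it admits one for $\M+a$; the two sets over which the suprema in the definition of $\lambda_p$ are taken coincide, and the equality $\lambda_p(\M+a)=\lambda_p(\M_\eps+a_\eps)$ follows.

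I do not anticipate a genuine obstacle here: the only points requiring a line of care are the justification of the change of variables in the convolution (immediate since $J\in L^1(\R^N)$ and the integrand is nonnegative, so the identity holds in $[0,+\infty]$) and the verification that the scaling preserves continuity and strict positivity of the test function, which is obvious.
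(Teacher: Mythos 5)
Your proof is correct. The paper does not prove this lemma itself; it cites \cite{Berestycki2014}, so there is no in-paper proof to compare against. The change-of-variables conjugation $\psi(x)=\varphi(x/\eps)$, the identity $\M_\eps[\psi](x)=\M[\varphi](x/\eps)$, and the observation that $\varphi\mapsto\varphi(\cdot/\eps)$ is a bijection on the cone of positive continuous functions (with $x\mapsto x/\eps$ a bijection of $\R^N$) together show that the admissible sets of $\lambda$ in the two $\sup$-definitions coincide, which is exactly what is needed. Your parenthetical remark about test functions $\varphi$ for which $J\star\varphi$ is $+\infty$ somewhere is appropriately careful: such $\varphi$ fail the test inequality on both sides under the same substitution, so they do not affect either supremum. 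This is the natural and standard scaling argument and is complete as written.
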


\bigskip

Lastly, we recall some recent results obtained in \cite{Berestycki2014} on the characterisation of the generalised principal eigenvalue $\lambda_p(\m_{\eps,m} +a)$. Motivated by the works \cite{Berestycki2006, Berestycki2007, Berestycki2008} on the generalised principal eigenvalue of an elliptic operators, let us introduce the two  definitions : 

\begin{definition} Let $\lb{\O}+a$ be  as in \eqref{bcv-def-opl}.  We define the following quantities:
\begin{align}
&\lambda_p'(\lb{\O}+a)&:=&\inf\{\lambda\in\R \,|\, \exists \varphi\ge 0, \varphi\in C(\O)\cap L^\infty(\O), \textrm{ such that } \oplb{\varphi}{\O}+(a+\lambda)\varphi\geq 0\;\text{ in}\; \O\}\label{bcv-eq-def-lambda'},\\
&\lambda_v(\lb{\O}+a)&:=&\inf_{\varphi \in L^2(\O),\varphi\not \equiv 0} -\frac{\langle\oplb{\varphi}{\O}+a\varphi, \varphi \rangle}{\langle\varphi, \varphi \rangle},\\
&&=&\inf_{\varphi \in L^2(\O),\varphi\not \equiv 0} \frac{\int_{\O}\int_{\O}J(x-y)(\varphi(x)-\varphi(y))^2\,dxdy -\int_{\O}(a-1+k(x))\varphi^2(x)\,dx}{\|\varphi\|_{L^2(\O)}^2}
\label{bcv-eq-def-lambda-v}.
\end{align}
where $k(x):=\int_{\O}J(y-x)\,dy$ and $\langle \cdot, \cdot\rangle$ denotes the standard scalar product in $L^{2}(\O)$.
\end{definition}

These definitions  are natural extension  of the definitions  known for an elliptic operator. It is worth to mention that those definitions  have already been used for the study of \eqref{bcv-eq} in several papers \cite{Coville2008b,Coville2013,2013arXiv1305.7122C,Garcia-Melian2009a,Ignat2012}, but the relation between $\lambda_p,\lambda_p'$ and $\lambda_v$  had not been clarified.

For elliptic operators, the analogues of these three quantities are equivalent on bounded domain \cite{Berestycki1994}. This is not necessarily the case for unbounded domains, where examples can be constructed \cite{Berestycki2006,Berestycki2007,Berestycki2010},  for which  $\lambda_1>\lambda_1'$. Since the operator, $\lb{\O}+a$, shares many properties with elliptic operators, it is  suspected that the three quantities, $\lambda_p,\lambda_p'$ and $\lambda_v$, are not necessarily  equal.  However, for compactly supported kernel ,?, $J$, we have:

 \begin{theorem}[\cite{Berestycki2014}]\label{bcv-thm6}
 Let $J$ be compactly supported satisfying {\rm (H1)--(H2)}. Assume that $a\in C(\R^N)\cap L^{\infty}(\R^N)$. Then we have 
 $$\lambda_p(\m_{\eps,m} +a)=\lambda_p'(\m_{\eps,m} +a)=\lambda_v(\m_{\eps,m} +a).$$
 Moreover, we have the following asymptotic behaviour as either $\eps \to 0$ or $\eps\to \infty$ :
  \begin{itemize}
\item When $0<m\le 2 \qquad\lim_{\eps\to +\infty}\lambda_p(\m_{\eps,m}+a)=-\sup_{x\in\R^N}a(x)$
\item When $m=0, \qquad\lim_{\eps\to +\infty}\lambda_p(\m_{\eps}+a)=1-\sup_{x\in\R^N}a(x)$
\item When $0\le m <2, \qquad\lim_{\eps\to 0}\lambda_p(\m_{\eps,m}+a)=-\sup_{x\in\R^N}a(x)$ 
\item When $m=2$ and $a$ is globally Lipschitz, then 
$$\lim_{\eps\to 0}\lambda_p(\m_{\eps,2}+a)=\lambda_1\left(\frac{D_2(J)}{2N}\Delta +a\right),$$ 
where we recall that $$D_2(J):=\int_{\R^N}J(z)z^2\,dz.$$

and $$\lambda_1\left(\frac{D_2(J)}{2N}\Delta +a\right):=\inf_{\varphi \in H^1_0(\R^N),\varphi\not \equiv 0} \frac{D_2(J)}{2N}\frac{\int_{\R^N}|\nabla\varphi|^2(x)\,dx}{\nlto{\varphi}^2} -\frac{\int_{\R^N}a(x)\varphi^2(x)\,dx}{\nlto{\varphi}^2}. $$
\end{itemize} 
 
 \end{theorem}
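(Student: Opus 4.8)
The proposal has two stages: first the three–way identity $\lambda_p=\lambda_p'=\lambda_v$, then the four asymptotics, which all flow from the variational identity $\lambda_p=\lambda_v$ once it is available, combined with Proposition \ref{bcv-prop-pev}(iv), the scaling Lemma \ref{bcv-lem-scal-eq}, and a Bourgain–Brezis–Mironescu type argument for the singular case.

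\textbf{The equality.} The inequality $\lambda_p(\m_{\eps,m}+a)\le\lambda_v(\m_{\eps,m}+a)$ holds for any kernel and is obtained by the nonlocal ground–state substitution: if $\varphi\in C(\R^N)$, $\varphi>0$, satisfies $\m_{\eps,m}\varphi+(a+\lambda)\varphi\le0$, one writes $\psi=\varphi w$ for $\psi\in C_c(\R^N)$, uses the pointwise identity $\big(\varphi(x)w(x)-\varphi(y)w(y)\big)^2=\varphi(x)\varphi(y)\big(w(x)-w(y)\big)^2+w(x)^2\varphi(x)\big(\varphi(x)-\varphi(y)\big)+w(y)^2\varphi(y)\big(\varphi(y)-\varphi(x)\big)$, and integrates against $\eps^{-m}J_\eps(x-y)$; the symmetry of $J$ and the supersolution inequality for $\varphi$ then show the Rayleigh quotient of $\psi$ defining $\lambda_v$ is $\ge\lambda$, and one concludes by density. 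The inequality $\lambda_v(\m_{\eps,m}+a)\le\lambda_p'(\m_{\eps,m}+a)$ follows by testing $\lambda_v$ against truncations $\psi\zeta_R$ of a bounded positive subsolution $\psi$ ($\zeta_R$ a cut–off equal to $1$ on $B_R$): because $J$ has finite range, the cut–off error is supported in an annulus of fixed width and is $o(\|\psi\zeta_R\|_2^2)$, so the quotient of $\psi\zeta_R$ is $\le\mu+o(1)$. The substantive inequality is $\lambda_p'(\m_{\eps,m}+a)\le\lambda_p(\m_{\eps,m}+a)$: exhaust $\R^N$ by balls $B_n$ and let $\m^{B_n}_{\eps,m}$ be the operator $\m_{\eps,m}$ with the convolution restricted to $B_n$; the bounded–domain theory (Proposition \ref{bcv-prop-phip}, after a harmless perturbation $a\mapsto a-\eta$, $\eta\downarrow0$, handled by the Lipschitz bound of Proposition \ref{bcv-prop-pev}(iii) in the borderline case $\lambda_p(\m^{B_n}_{\eps,m}+a)=-\sup_{B_n}a$) furnishes a positive continuous principal eigenfunction $\varphi_n$ on $\overline{B_n}$ with eigenvalue $\mu_n$; extended by zero, $\varphi_n$ is a bounded, compactly supported, nonnegative global subsolution with parameter $\mu_n$ (inside $B_n$ the equation persists, outside $\m_{\eps,m}\varphi_n\ge0=(a+\mu_n)\varphi_n$), so $\lambda_p'(\m_{\eps,m}+a)\le\mu_n$, while Lemma \ref{bcv-lem-lim} — whose validity is exactly where the compact support of $J$ is indispensable — gives $\mu_n\to\lambda_p(\m_{\eps,m}+a)$. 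Chaining the three inequalities gives the equality; the point needing care is the borderline/boundary–layer case, again controlled by the finite range of $J$.

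\textbf{The asymptotics, except $m=2$, $\eps\to0$.} For $0<m\le2$ and $\eps\to+\infty$, writing $\m_{\eps,m}+a=\eps^{-m}\,J_\eps\star(\cdot)+(a-\eps^{-m})$ exhibits the operator as an integral operator of kernel mass $\eps^{-m}$ plus a bounded potential, so Proposition \ref{bcv-prop-pev}(iv) sandwiches $\lambda_p(\m_{\eps,m}+a)$ in $[-\sup_{\R^N}a,\,-\sup_{\R^N}a+\eps^{-m}]$ and $\eps^{-m}\to0$. For $\eps\to0$ and $0\le m<2$, the same Proposition gives the lower bound $\lambda_p(\m_{\eps,m}+a)\ge-\sup_{\R^N}a$, so by the equality it suffices to bound $\lambda_v$ from above: test it against a fixed bump $\phi\in C_c^\infty(\R^N)$ rescaled to the intermediate scale $\ell=\eps^{(2-m)/4}$ (so $\eps\ll\ell\ll1$) and recentred near a near–maximiser $x_0$ of $a$, i.e. $\varphi(x)=\phi\big((x-x_0)/\ell\big)$; a second–order Taylor expansion gives $\tfrac1{2\eps^m}\iint J_\eps(x-y)\big(\varphi(x)-\varphi(y)\big)^2\,dx\,dy=\tfrac{\eps^{2-m}D_2(J)}{2N}\int|\nabla\varphi|^2+o\!\big(\eps^{(2-m)/2}\|\varphi\|_2^2\big)$, so the Rayleigh quotient of $\varphi$ is $O(\eps^{(2-m)/2})-\int a\varphi^2/\|\varphi\|_2^2\to-a(x_0)$, whence $\limsup_{\eps\to0}\lambda_v(\m_{\eps,m}+a)\le-\sup_{\R^N}a$. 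For $\eps\to+\infty$ with $m=0$ the operator $\m_\eps$ does not vanish; here one again uses the equality and analyses $\lambda_v(\m_\eps+a)=1-\sup_{\|\varphi\|_2=1}\big(\langle J_\eps\star\varphi,\varphi\rangle+\langle a\varphi,\varphi\rangle\big)$, rewriting $\langle J_\eps\star\varphi,\varphi\rangle=\int_{\R^N}\widehat J(\eps\xi)|\widehat\varphi(\xi)|^2\,d\xi$ and exploiting the decay of $\widehat J$ at infinity to identify the limit $1-\sup_{\R^N}a$.

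\textbf{The limit $\eps\to0$, $m=2$ (the main obstacle).} By the equality it again suffices to pass to the limit in
\[\lambda_v(\m_{\eps,2}+a)=\inf_{\varphi\in L^2(\R^N),\ \varphi\not\equiv0}\frac{\tfrac1{2\eps^2}\iint_{\R^N\times\R^N}J_\eps(x-y)\big(\varphi(x)-\varphi(y)\big)^2\,dx\,dy-\int_{\R^N}a\varphi^2}{\|\varphi\|_2^2}.\]
The bound $\limsup_{\eps\to0}\lambda_v(\m_{\eps,2}+a)\le\lambda_1\!\big(\tfrac{D_2(J)}{2N}\Delta+a\big)$ follows by testing with $\varphi\in C_c^\infty(\R^N)$ and the (now prefactor–free) Taylor expansion $\tfrac1{2\eps^2}\iint J_\eps(x-y)\big(\varphi(x)-\varphi(y)\big)^2\to\tfrac{D_2(J)}{2N}\int|\nabla\varphi|^2$. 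The reverse inequality is the heart of the matter and is where the Bourgain–Brezis–Mironescu / Ponce theory of these nonlocal Dirichlet forms enters \cite{Bourgain2001,Brezis2002,Ponce2004a,Ponce2004}: a sequence $\varphi_\eps$ bounded in $L^2$ with $\tfrac1{\eps^2}\iint J_\eps(x-y)\big(\varphi_\eps(x)-\varphi_\eps(y)\big)^2$ bounded is relatively compact in $L^2_{loc}$ with limit $\varphi\in H^1(\R^N)$ satisfying $\liminf_{\eps\to0}\tfrac1{2\eps^2}\iint J_\eps(x-y)\big(\varphi_\eps(x)-\varphi_\eps(y)\big)^2\ge\tfrac{D_2(J)}{2N}\int|\nabla\varphi|^2$. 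Applying this to a normalised near–minimising sequence, together with a concentration–compactness argument excluding loss of mass at infinity — here the global Lipschitz continuity of $a$ makes the potential term a continuous perturbation and prevents escaping mass from pushing the quotient below $\lambda_1$ — gives $\liminf_{\eps\to0}\lambda_v(\m_{\eps,2}+a)\ge\lambda_1\!\big(\tfrac{D_2(J)}{2N}\Delta+a\big)$, hence the stated limit. Carrying out this $\Gamma$–convergence on the unbounded domain $\R^N$, rather than on a bounded set, is the principal technical difficulty; everything else is comparatively soft once the equality, Lemma \ref{bcv-lem-lim} and Proposition \ref{bcv-prop-pev} are in hand.
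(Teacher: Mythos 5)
The paper itself does not prove Theorem \ref{bcv-thm6}: it is imported verbatim from \cite{Berestycki2014} as part of the preliminaries, so there is no in-paper argument to compare your proposal against. Your roadmap (ground-state substitution for $\lambda_p\le\lambda_v$, truncated subsolution tests for $\lambda_v\le\lambda_p'$, exhaustion by balls plus Lemma \ref{bcv-lem-lim} for $\lambda_p'\le\lambda_p$, then asymptotics via the variational form) is a plausible architecture for that reference's proof, and the algebraic identity you use for the ground-state substitution is correct. Nonetheless there are concrete gaps.

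The most serious is the case $m=0$, $\eps\to+\infty$. As literally stated under only $a\in C(\R^N)\cap L^\infty(\R^N)$, the claimed limit is false: for $a\equiv c$ a constant one has, for every $\eps>0$, $\lambda_v(\m_\eps+a)=\lambda_v(\m_\eps)-c=-c$ (test the Rayleigh quotient against $\varphi_n(x)=n^{-N/2}\phi(x/n)$, whose nonlocal Dirichlet energy is $O(\eps^2/n^2)\to 0$), so $\lambda_p\equiv -c\ne 1-c=1-\sup a$. The statement therefore presupposes some unfavourability of $a$ at infinity, e.g. $\limsup_{|x|\to\infty}a(x)<\sup a$, which holds in this paper's applications via (H4) but is not in the displayed hypothesis. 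Your Fourier argument only controls the high-frequency contribution through the decay of $\widehat J$; it does not exclude near-minimizers that spread over larger and larger low-frequency scales, which is exactly the mechanism behind the counterexample. A step showing near-minimizers concentrate on a fixed compact set (using the behaviour of $a$ at infinity) is required and is missing.

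Two further points. In the chain $\lambda_p'\le\lambda_p$, your handling of the borderline case $\lambda_p(\lb{B_n}+a)=-\sup_{B_n}a$ by the shift $a\mapsto a-\eta$, $\eta\downarrow 0$, does not help: shifting $a$ by a constant shifts $\lambda_p(\lb{B_n}+a)$ and $-\sup_{B_n}a$ by the same amount, so the borderline condition of Proposition \ref{bcv-prop-phip} is invariant under it. One needs a perturbation that changes the profile of $a$ near its maximum (e.g. a compactly supported bump, as the paper itself does in Section \ref{bcv-section-crit}, or the criterion $1/(\sup a-a)\notin L^1_{loc}$) or work directly with almost-eigenfunctions at level $\mu_n-\delta$ and send $\delta\downarrow 0$. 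Finally, for $m=2$, $\eps\to 0$, the BBM/Ponce compactness step is the right idea, but the assertion that the global Lipschitz continuity of $a$ ``prevents escaping mass'' is not what is at stake: Lipschitz continuity is what makes the second-order Taylor expansion uniform and $\lambda_1$ stable under localization; concentration–compactness on $\R^N$ again hinges on the behaviour of $a$ at infinity, not on its modulus of continuity, and this part of the argument is asserted rather than carried out.
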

\bigskip

A similar result also holds for the rescaled operator $$\lb{R,\eps,m}:=\frac{1}{\eps^m}\lb{R,\eps}$$ with 
$\lb{R,\eps}$ defined by :
$$\lb{R,\eps}[\varphi](x)=\int_{B_R}J_\eps(x-y)\varphi(y)\,dy -\varphi(x)$$
 with the rescaled kernel $J_\eps$. Namely,
\begin{theorem}[\cite{Berestycki2014}]\label{bcv-thm7}
 Assume $J$  satisfies {\rm (H1)--(H2)} and let  $a(x)\in C(\bar B_R(0))$. Then we have 
 $$\lambda_p(\lb{R,\eps,m} +a)=\lambda_p'(\lb{R,\eps,m} +a)=\lambda_v(\lb{R,\eps,m} +a).$$
  Moreover, we have the following asymptotic behaviour:
   \begin{itemize}
\item When $0<m\le 2 \qquad\lim_{\eps\to +\infty}\lambda_p(\lb{R,\eps,m}+a)=-\sup_{B_R(0)}a(x)$
\item When $m=0, \qquad\lim_{\eps\to +\infty}\lambda_p(\lb{R,\eps}+a)=1-\sup_{B_R(0)}a(x)$
\item When $0\le m <2, \qquad\lim_{\eps\to 0}\lambda_p(\lb{R,\eps,m}+a)=-\sup_{B_R(0)}a(x)$ 
\item When $m=2$ and assuming that the function $a$ is globally Lipschitz, we get
$$\lim_{\eps\to 0}\lambda_p(\lb{R,\eps,2}+a)=\lambda_1\left(\frac{D_2(J)}{2N}\Delta +a, B_R(0)\right)$$ 
\end{itemize}

 \end{theorem}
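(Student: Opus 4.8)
The plan is to establish the identity $\lambda_p=\lambda_p'=\lambda_v$ first and then the four limit formulas, the second assertion relying on the first only in the critical case $m=2$. \emph{Equality of the three quantities.} Since $B_R(0)$ is bounded and $J$ is continuous, for every $\eps>0$ the integral operator $\varphi\mapsto\int_{B_R(0)}J_\eps(x-y)\varphi(y)\,dy$ is compact on $C(\overline{B_R(0)})$ and, being symmetric, compact and self-adjoint on $L^2(B_R(0))$; thus $\lb{R,\eps,m}+a$ is a compact perturbation of the bounded self-adjoint multiplication operator $a-\eps^{-m}$ and $-\lambda_v$ is the top of its spectrum. First, $\lambda_p\le\lambda_v$ follows from the ground-state transformation: for $\lambda<\lambda_p$ there is a positive continuous $\psi$ with $(\lb{R,\eps,m}+a)\psi+\lambda\psi\le0$, and writing a test function as $\varphi=\psi\,w$ the identity
$$-\big\langle(\lb{R,\eps,m}+a)\varphi,\varphi\big\rangle=\frac{1}{2\eps^m}\iint_{B_R(0)^2}\!\!J_\eps(x-y)\psi(x)\psi(y)\big(w(x)-w(y)\big)^2\,dx\,dy-\big\langle w^2\psi,(\lb{R,\eps,m}+a)\psi\big\rangle$$
together with $(\lb{R,\eps,m}+a)\psi\le-\lambda\psi$ shows the Rayleigh quotient of $\varphi$ is $\ge\lambda$ (a density argument covers $\varphi$ not compactly supported in $B_R(0)$, using boundedness of the operator on $L^2$). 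Next, $\lambda_v\le\lambda_p'$ by evaluating the Rayleigh quotient against any admissible super-solution. For the converse, whenever $-\lambda_v$ exceeds the top $\sup_{B_R(0)}a-\eps^{-m}$ of the essential spectrum it is an isolated eigenvalue whose eigenfunction $\varphi_v$ is positive by irreducibility of the kernel (here (H2) is used) and continuous up to $\overline{B_R(0)}$ by the bootstrap $\varphi_v=(\eps^{-m}-\lambda_v-a)^{-1}\eps^{-m}\int_{B_R(0)}J_\eps(\cdot-y)\varphi_v(y)\,dy$; then $\varphi_v$ is simultaneously a sub-solution and a bounded super-solution for $\lambda_v$, so $\lambda_p\ge\lambda_v$ and $\lambda_p'\le\lambda_v$, whence $\lambda_p=\lambda_p'=\lambda_v$. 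The residual case, in which the spectral maximum lies in the essential spectrum, is dispatched using Propositions~\ref{bcv-prop-pev}(iv) and~\ref{bcv-prop-phip}. This is the bounded-domain counterpart of Theorem~\ref{bcv-thm6}, and is easier thanks to the compactness just exploited.

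\emph{The limits as $\eps\to+\infty$, and the lower bounds for $\eps\to0$.} Write $\lb{R,\eps,m}+a=\eps^{-m}P_\eps+(a-\eps^{-m})$ with $P_\eps\varphi(x):=\int_{B_R(0)}J_\eps(x-y)\varphi(y)\,dy$, a positive kernel of mass $k_\eps(x):=\int_{B_R(0)}J_\eps(x-y)\,dy\le1$. The proof of Proposition~\ref{bcv-prop-pev}(iv) uses only positivity of the kernel, so applied to $\eps^{-m}J_\eps$ it gives, for every $\eps$,
$$-\sup_{B_R(0)}\Big(a-\tfrac{1-k_\eps}{\eps^m}\Big)\ \le\ \lambda_p(\lb{R,\eps,m}+a)\ \le\ -\sup_{B_R(0)}\Big(a-\tfrac{1}{\eps^m}\Big).$$
If $m>0$ then $\eps^{-m}\to0$ and $\eps^{-m}(1-k_\eps)\le\eps^{-m}\to0$ as $\eps\to\infty$, so the two sides squeeze to $-\sup_{B_R(0)}a$; if $m=0$ the bracketing reads $-\sup_{B_R(0)}(a-(1-k_\eps))\le\lambda_p\le-\sup_{B_R(0)}(a-1)$ and $k_\eps\le\|J\|_{\infty}|B_R(0)|\,\eps^{-N}\to0$ forces the limit $1-\sup_{B_R(0)}a$. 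Moreover, since $a-\eps^{-m}(1-k_\eps)\le a$, the left-hand estimate immediately yields $\liminf_{\eps\to0}\lambda_p(\lb{R,\eps,m}+a)\ge-\sup_{B_R(0)}a$ for every $m\ge0$.

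\emph{The limit as $\eps\to0$ for $0\le m<2$.} It remains to show $\limsup_{\eps\to0}\lambda_p\le-\sup_{B_R(0)}a$; by the equality above it suffices to build near-minimizers of $\lambda_v$. Given $\delta>0$, fix an interior point $x_\delta\in B_R(0)$ with $a(x_\delta)>\sup_{B_R(0)}a-\delta$ and take $\varphi_\eps$ a smooth bump centred at $x_\delta$, $L^2$-normalised, supported in a fixed ball $B_r(x_\delta)\subset\subset B_R(0)$, of width $\ell_\eps\to0$ chosen so that $\ell_\eps/\eps\to\infty$ and $\eps^{2-m}\ell_\eps^{-2}\to0$ — both possible because $m<2$ (and for $m=0$ one may even keep $\varphi$ fixed). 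Then the rescaled nonlocal energy $\eps^{-m}\iint_{B_R(0)^2}J_\eps(x-y)(\varphi_\eps(x)-\varphi_\eps(y))^2\,dx\,dy$ is $O(\eps^{2-m}\ell_\eps^{-2})$ when $J$ has finite second moment, and $o(\eps^{-m})$ in general by dominated convergence, while the boundary contribution $\eps^{-m}\int_{B_R(0)}(1-k_\eps)\varphi_\eps^2$ vanishes because $\varphi_\eps$ lives at a fixed positive distance from $\partial B_R(0)$, where $1-k_\eps=\int_{\{|w|>c/\eps\}}J\to0$. Hence $\lambda_v(\lb{R,\eps,m}+a)\le o(1)-a(x_\delta)\le-\sup_{B_R(0)}a+2\delta$, and letting $\delta\to0$ gives $\lim_{\eps\to0}\lambda_p(\lb{R,\eps,m}+a)=-\sup_{B_R(0)}a$.

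\emph{The critical case $m=2$, $\eps\to0$, and the main obstacle.} Extending $\varphi\in L^2(B_R(0))$ by $0$ to $\widetilde\varphi$ on $\R^N$, the mass defect $1-k_\eps$ exactly accounts for the interactions across $\partial B_R(0)$:
$$\tfrac12\iint_{B_R(0)^2}\!\!J_\eps(x-y)(\varphi(x)-\varphi(y))^2\,dx\,dy+\int_{B_R(0)}(1-k_\eps)\varphi^2=\tfrac12\iint_{\R^N\times\R^N}\!\!J_\eps(x-y)(\widetilde\varphi(x)-\widetilde\varphi(y))^2\,dx\,dy,$$
so $\lambda_v(\lb{R,\eps,2}+a)=\inf\big\{\tfrac1{2\eps^2}\iint_{\R^N\times\R^N}J_\eps(x-y)(\widetilde\varphi(x)-\widetilde\varphi(y))^2\,dx\,dy-\int_{B_R(0)}a\varphi^2:\ \nlto{\varphi}=1\big\}$. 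I would then invoke the theory of Bourgain--Brezis--Mironescu and Ponce \cite{Bourgain2001,Brezis2002,Ponce2004a,Ponce2004}: the functionals $\varphi\mapsto\tfrac1{2\eps^2}\iint_{\R^N\times\R^N}J_\eps(x-y)(\widetilde\varphi(x)-\widetilde\varphi(y))^2\,dx\,dy$ $\Gamma$-converge in $L^2$ to $\varphi\mapsto\tfrac{D_2(J)}{2N}\int_{\R^N}|\nabla\widetilde\varphi|^2$ (equal to $+\infty$ off $H^1$), and the companion compactness statement supplies equicoercivity; since "$\widetilde\varphi=0$ a.e.\ outside $B_R(0)$" is exactly the constraint $\varphi\in H^1_0(B_R(0))$ and $\varphi\mapsto\int_{B_R(0)}a\varphi^2$ is a strongly continuous perturbation (here the Lipschitz regularity of $a$ and the finiteness of $D_2(J)$ enter), the infima converge and
$$\lim_{\eps\to0}\lambda_v(\lb{R,\eps,2}+a)=\inf_{\varphi\in H^1_0(B_R(0)),\ \nlto{\varphi}=1}\Big(\tfrac{D_2(J)}{2N}\int_{\R^N}|\nabla\varphi|^2-\int_{B_R(0)}a\varphi^2\Big)=\lambda_1\Big(\tfrac{D_2(J)}{2N}\Delta+a,\,B_R(0)\Big),$$
which, combined with $\lambda_p=\lambda_v$, finishes the proof. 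The heart of the matter is this last step — the passage from the nonlocal to the local Dirichlet form and the bookkeeping that converts the mass defect into a Dirichlet boundary condition — together with the elementary but delicate balancing of scales when $0<m<2$; everything else becomes soft once the compactness of the first step is in hand.
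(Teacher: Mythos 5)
This paper does not contain a proof of Theorem~\ref{bcv-thm7}: it is stated as a citation to \cite{Berestycki2014}, together with its full-space companion Theorem~\ref{bcv-thm6}, and used as a black box throughout. There is therefore no in-paper proof to compare your argument against, and I can only assess it on its own merits. Your overall architecture is sensible and is almost certainly in the spirit of what \cite{Berestycki2014} does: compactness of $\varphi\mapsto\int_{B_R(0)}J_\eps(\cdot-y)\varphi(y)\,dy$ on the bounded domain, ground-state transform and test functions for $\lambda_p\le\lambda_v\le\lambda_p'$, positive eigenfunctions from Krein--Rutman/Perron--Frobenius when the spectral top is isolated, the elementary squeeze from Proposition~\ref{bcv-prop-pev}(iv) for $\eps\to\infty$, and a Bourgain--Brezis--Mironescu/Ponce argument (cf.\ \cite{Bourgain2001,Brezis2002,Ponce2004a,Ponce2004}, which the paper itself invokes in Section~\ref{bcv-section-asym}) for $m=2$, $\eps\to0$, with the mass defect $1-k_\eps$ correctly encoding the Dirichlet condition.

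Two steps, however, are genuine gaps. First, in the equality $\lambda_p=\lambda_p'=\lambda_v$, the \emph{residual case} where $-\lambda_v$ coincides with the top of the essential spectrum $\sup_{B_R(0)}(a-\eps^{-m})$ is not ``dispatched by Propositions~\ref{bcv-prop-pev}(iv) and \ref{bcv-prop-phip}'' as you assert. In that case Proposition~\ref{bcv-prop-phip} tells you precisely that no continuous positive eigenfunction exists, so the argument you used to get $\lambda_p\ge\lambda_v$ and $\lambda_p'\le\lambda_v$ breaks down, and Proposition~\ref{bcv-prop-pev}(iv) gives $\lambda_p\le-\sup(a-\eps^{-m})=\lambda_v$ together with a lower bound $\lambda_p\ge-\sup(a-\eps^{-m}(1-k_\eps))$ that does not reach $\lambda_v$ because $k_\eps\not\equiv0$. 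Closing this requires an additional approximation (e.g.\ perturbing $a$ or $\O$ so that the top eigenvalue becomes isolated and passing to the limit), which you should spell out. Second, for $0<m<2$, $\eps\to0$, the phrase ``$o(\eps^{-m})$ in general by dominated convergence'' is not a proof: dominated convergence yields the \emph{qualitative} decay $\iint J_\eps(x-y)(\varphi_\eps(x)-\varphi_\eps(y))^2\,dx\,dy\to0$ but says nothing about the required \emph{rate} $o(\eps^m)$. Splitting $\eps^{-m}\int J(w)\min(C\eps^2|w|^2\ell_\eps^{-2},C')\,dw$ at $|w|=T$ leaves a tail term $C'\eps^{-m}\int_{|w|>T}J$, which for $m>0$ blows up as $\eps\to0$ unless $T=T_\eps\to\infty$ at a rate compatible with the tail decay of $J$. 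Under (H1)--(H2) alone (no moment or compact-support hypothesis) this does not close; your argument does close if $J$ has a finite second moment or compact support, which is the setting in which the theorem is actually invoked in this paper, but you should state the needed tail condition rather than paper over it.
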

\bigskip

\subsection{Existence criteria for the KPP-equation \eqref{bcv-eq-kpp}}

With this notion of generalised principal eigenvalue, it has been shown \cite{Bates2007,Coville2010} that on bounded domains, the existence of a positive stationary solution of \eqref{bcv-eq-kpp} is conditioned by the sign of $\lambda_p(\lb{\O}+\partial_sf (x,0))$. More precisely,
 \begin{theorem}[\cite{Bates2007,Coville2010}]\label{bcv-thm-existence} 
 Let $\O$ be a bounded domain and  $\lb{\O}$ defined as in \eqref{bcv-def-opl}. Assume that $f$ satisfies $(H3)$. Then there exists a unique positive continuous function, $\bar u$, stationary solution  of \eqref{bcv-eq-kpp} if and only if $\lambda_p(\lb{\O}+ \partial_s f (x,0)) < 0$. Moreover,  if   $\lambda_p \ge 0$ then $0$ is the only  non negative bounded stationary solution  of \eqref{bcv-eq-kpp}. In addition, for any positive  continuous solutions of \eqref{bcv-eq-kpp} we have the following dynamics :
 \begin{itemize}
 \item[(i)] When $\lambda_p\ge 0$, $$\lim_{t\to \infty}u(t,x)\to 0 \quad \text{ uniformly in } \O,$$  
 \item[(ii)] When $\lambda_p< 0$, $$\lim_{t\to \infty}u(t,x)\to \bar u \quad \text{ uniformly in } \O.$$
 \end{itemize}  
 \end{theorem}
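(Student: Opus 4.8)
The strategy is to run the nonlocal analogue of the classical sub/super-solution scheme, using the generalised-principal-eigenvalue toolbox (Propositions \ref{bcv-prop-phip}--\ref{bcv-prop-pev}) in place of Krein--Rutman theory — which is unavailable here since $\oplb{\cdot}{\O}$ is not compact — and then transporting the resulting orderings along the parabolic semiflow of \eqref{bcv-eq-kpp}. Write $\beta(x):=\partial_sf(x,0)$, let $\m_{\O}$ be the bounded operator $u\mapsto\oplb{u}{\O}-u$ appearing in \eqref{bcv-eq-kpp} (the bounded-domain analogue of $\m$), and set $\lambda_p:=\lambda_p(\m_{\O}+\beta)$. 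I will use three consequences of (H3): (a) $f(x,s)\le\beta(x)s$ for $x\in\bar\O,\ s\ge0$, strictly when $s>0$, and moreover $\partial_sf(x,s)<f(x,s)/s$ for $s>0$; (b) since $f(x,s)/s\le f(x,S(x))/S(x)\le0$ for $s\ge S(x)$, there is $\bar S\ge\|S\|_{L^{\infty}(\O)}$ with $f(\cdot,\bar S)\le0$ on $\bar\O$, so the constant $\bar S$ is a supersolution of the stationary equation; (c) \eqref{bcv-eq-kpp} generates a well-posed order-preserving semiflow on $C(\bar\O)$, bounded by $\max(\|u_0\|_{\infty},\bar S)$, together with a comparison principle between sub- and super-solutions (all classical for bounded nonlocal operators, see \cite{Bates2007,Coville2010}). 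One preliminary observation: by (a) the map $s\mapsto s-f(x,s)$ is strictly increasing on $\{\,s>0:s-f(x,s)>0\,\}$; rewriting the stationary equation as $\bar u-f(\cdot,\bar u)=\oplb{\bar u}{\O}$, whose right-hand side is continuous and — by (H2) and propagation over neighbourhoods on which $J>0$ — strictly positive on $\bar\O$, one deduces that every bounded nonnegative stationary solution $\bar u\not\equiv0$ is automatically continuous and bounded below by a positive constant on $\bar\O$, so that the "positive continuous" and "positive bounded" solutions coincide.

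\textbf{Non-existence when $\lambda_p\ge0$.} Let $\bar u\not\equiv0$ be a bounded nonnegative stationary solution; by the preliminary observation $\bar u\in C(\bar\O)$ and $\inf_{\bar\O}\bar u>0$. Subtracting the stationary equation gives $\m_{\O}[\bar u]+\beta\bar u=\bigl(\beta-\tfrac{f(\cdot,\bar u)}{\bar u}\bigr)\bar u\ge\eta\,\bar u$ on $\bar\O$, with $\eta:=\min_{\bar\O}\bigl(\beta-\tfrac{f(\cdot,\bar u)}{\bar u}\bigr)>0$ by (a) and compactness of $\bar\O$. Hence $\bar u$ is an admissible test function for $\lambda_p'(\m_{\O}+\beta-\eta)$ at parameter $0$, so $\lambda_p'(\m_{\O}+\beta-\eta)\le0$, i.e. $\lambda_p'(\m_{\O}+\beta)\le-\eta<0$. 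Since $\lambda_p\le\lambda_p'$ always (a short sliding argument, see \cite{Berestycki2014,Coville2010}), this contradicts $\lambda_p\ge0$. Therefore $\bar u\equiv0$, which gives the "only if" direction of the equivalence and the statement that $0$ is the only nonnegative bounded stationary solution when $\lambda_p\ge0$.

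\textbf{Existence and uniqueness when $\lambda_p<0$.} Take $\bar S$ of (b) as supersolution. When $\lambda_p<-\sup_\O\beta$, Proposition \ref{bcv-prop-phip} furnishes a continuous $\varphi_p>0$ on $\bar\O$ with $\m_{\O}[\varphi_p]+\beta\varphi_p=-\lambda_p\varphi_p>0$; since $f(x,s)=\beta(x)s+o(s)$ uniformly in $x\in\bar\O$ as $s\to0^+$, the function $\varepsilon\varphi_p$ satisfies $\m_{\O}[\varepsilon\varphi_p]+f(\cdot,\varepsilon\varphi_p)=\varepsilon\bigl(-\lambda_p\varphi_p+o(1)\bigr)>0$ for $\varepsilon$ small, hence is a positive subsolution with $\varepsilon\varphi_p\le\bar S$. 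In the remaining regime $-\sup_\O\beta\le\lambda_p<0$ no continuous principal eigenfunction need exist, and producing the subsolution here is the main obstacle: one works with an exhausting sequence $\O_k\uparrow\O$ — along which $\lambda_p(\m_{\O_k}+\beta)$ decreases to $\lambda_p$, hence is $<0$ for $k$ large — and, after a harmless lowering of $\beta$ near its maximum so that Proposition \ref{bcv-prop-phip} applies on $\O_k$, obtains a principal eigenfunction $\varphi_k>0$ on $\O_k$ whose extension by $0$ gives, for $\varepsilon$ small, a (possibly discontinuous) positive subsolution of \eqref{bcv-eq-kpp} on $\O$ (this step is carried out in \cite{Bates2007,Coville2010}). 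In all cases the monotone iteration started from $\bar S$ and bounded below by the subsolution converges to a stationary solution $\tilde u$ with $0<\tilde u\le\bar S$, continuous by the preliminary observation. For uniqueness, if $\tilde u_1,\tilde u_2$ are positive stationary solutions set $\tau^{\ast}:=\inf\{\,\tau\ge1:\tau\tilde u_1\ge\tilde u_2\text{ on }\bar\O\,\}$; if $\tau^{\ast}>1$ then by (a) $\tau^{\ast}\tilde u_1$ is a strict supersolution, and evaluating the equation for $w:=\tau^{\ast}\tilde u_1-\tilde u_2\ge0$ at a point where $w$ vanishes forces $\oplb{w}{\O}<0$ there, impossible since $w\ge0$; hence $w>0$ on $\bar\O$, contradicting the minimality of $\tau^{\ast}$. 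So $\tau^{\ast}=1$, $\tilde u_1\ge\tilde u_2$, and by symmetry $\tilde u_1=\tilde u_2$.

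\textbf{Long-time behaviour.} Let $u$ solve \eqref{bcv-eq-kpp} with $0\le u_0\in C(\bar\O)$. By (c), $u(t,\cdot)\le\overline v(t,\cdot)$, where $\overline v$ starts from the constant $\|u_0\|_{\infty}\vee\bar S$; launched from a supersolution, $\overline v(t,\cdot)$ is nonincreasing in $t$, so it converges pointwise — and by Dini's theorem uniformly on the compact $\bar\O$ — to a bounded nonnegative stationary solution $w_\infty$. If $\lambda_p\ge0$ then $w_\infty\equiv0$ by the second paragraph, whence $\|u(t)\|_{\infty}\le\|\overline v(t)\|_{\infty}\to0$. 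If $\lambda_p<0$ then, since $\bar S\ge\tilde u$ and $\tilde u$ is stationary, comparison gives $\overline v(t,\cdot)\ge\tilde u$, so $w_\infty\ge\tilde u>0$; as $\tilde u$ is the only positive stationary solution, $w_\infty=\tilde u$. For the matching lower bound, $u_0>0$ on $\bar\O$ gives $u_0\ge\varepsilon\varphi_p$ (resp. $\ge\varepsilon\varphi_k$) for $\varepsilon$ small, and the solution launched from this subsolution is nondecreasing in $t$ and increases uniformly to a positive stationary solution, necessarily $\tilde u$; squeezing between the two then yields $\|(u-\tilde u)(t)\|_{\infty}\to0$. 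The crux of the whole proof is the construction of the positive subsolution in the regime $-\sup_\O\beta\le\lambda_p<0$, where the generalised principal eigenvalue fails to be an eigenvalue; everything else is the standard sub/super-solution and comparison machinery adapted to the bounded nonlocal operator.
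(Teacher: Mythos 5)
The paper does not prove this statement: Theorem \ref{bcv-thm-existence} is quoted verbatim from \cite{Bates2007,Coville2010} and used as a black box, so there is no in-paper argument to compare yours against. Measured against the standard proof in those references, your reconstruction follows the same route (sub/super-solution scheme driven by the generalised principal eigenvalue, monotone semiflow for the dynamics) and is essentially sound. Two of your ingredients are worth highlighting as correct and slightly sharper than what the paper itself says elsewhere: the inequality $\partial_s\bigl(s-f(x,s)\bigr)\ge \bigl(s-f(x,s)\bigr)/s$, which follows from (H3) alone and shows that every nonzero bounded nonnegative stationary solution on a bounded domain is automatically continuous and bounded below (the paper's remark after Theorem \ref{bcv-thm1} invokes concavity for this, which your computation shows is not needed); and the compactness of $\bar\O$ giving the strict margin $\eta>0$ in the non-existence step, which is why the borderline case $\lambda_p=0$ is painless here while it requires a whole extra lemma in the paper's $\R^N$ argument of Section \ref{bcv-section-crit}.

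The one place where your write-up is genuinely thin is exactly the step you identify as the crux: the subsolution when $\lambda_p=\lambda_p(\m_{\O}+\beta)$ equals its universal upper bound (Proposition \ref{bcv-prop-pev}(iv)) and no continuous principal eigenfunction exists. "A harmless lowering of $\beta$ near its maximum so that Proposition \ref{bcv-prop-phip} applies" does not follow from monotonicity and Lipschitz continuity alone: replacing $\beta$ by $\tilde\beta\le\beta$ raises $\lambda_p$ by at most $\|\beta-\tilde\beta\|_{\infty}$ while lowering $\sup\tilde\beta$ by at most the same amount, so the strict inequality required by Proposition \ref{bcv-prop-phip} is not automatic. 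The fix is to take the specific truncation $\tilde\beta=\min(\beta,\sup_{\O}\beta-\delta)$: its supremum is then attained on an open set, so the non-integrability criterion $1/(\sup\tilde\beta-\tilde\beta)\notin L^1_{loc}$ quoted just before Proposition \ref{bcv-prop-phip} yields a continuous eigenfunction $\tilde\varphi>0$ with eigenvalue $\tilde\lambda\le\lambda_p+\delta<0$ for $\delta$ small, and since $\tilde\beta\le\beta$ one checks directly that $\eps\tilde\varphi$ is a subsolution for the original nonlinearity (this is the same regularisation device the paper uses in Lemma \ref{bcv-lem-esti1}(iii) and in Section \ref{bcv-section-asym}, except there one perturbs upward, which also works because the perturbation is dominated by $-\lambda_p$). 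A second, minor, patchable point: in the long-time lower bound you invoke Dini's theorem for the solution launched from the discontinuous barrier $\eps\varphi_k\chi_{\O_k}$, whose orbit never becomes continuous; it is cleaner to note that $u(t_0,\cdot)\ge\delta\tilde u$ for some $t_0>0$ and $\delta\in(0,1]$, that $\delta\tilde u$ is a continuous subsolution by (H3), and to apply Dini to that orbit instead.
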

 
 \begin{remark}
This  existence criteria is similar to those known for the PDE reaction diffusion versions of \eqref{bcv-eq-kpp} \cite{Berestycki2005,Cantrell1991,Cantrell1998,Englander1999}.
 \end{remark}

\section{Existence/non existence  and  uniqueness of a non-trivial solution }\label{bcv-section-crit}

In this section we construct a non-trivial solution of \eqref{bcv-eq} and prove the necessary and sufficient condition stated in Theorem \ref{bcv-thm1}. We treat successively the  existence of a solution,  its uniqueness and non-existence. 

\subsection{Existence of a non-trivial positive solution}\label{bcv-ss-existence}
 The construction follows a basic approximation scheme previously used for example in \cite{Berestycki2009}.
 We introduce the following approximated problem : 
 \begin{equation}\label{bcv-eq-approx}
 \oplr{u}{R}+f(x,u)=0 \quad \text{ in }\quad \bar B(0,R)
 \end{equation}
where $B(0,R)$ denotes the ball of radius $R$ centred at the origin. By Theorem \ref{bcv-thm-existence}, for any $R>0$ the existence of a unique positive solution of  \eqref{bcv-eq-approx} is conditioned by the sign of $\lambda_p(\lb{R}+ \beta)$ where $\beta(x):=\partial_uf(x,0)$.
 Since $$\lim_{R\to +\infty} \lambda_p(\lb{R}+\beta)=\lambda_p(\M+\beta)<0,$$
by Lemma \ref{bcv-lem-lim} there exists $R_0>0$ such that $$ \forall R\ge R_0, \lambda_p(\lb{R}+\beta)<0.$$ 
As a consequence, by Theorem \ref{bcv-thm-existence}, for all $R>R_0$ there exists a unique positive solution of \eqref{bcv-eq-approx} that we denote $u_R$. Moreover, since for all $R>0, \sup_{B_R(0)}S(x)$ is a super-solution of  \eqref{bcv-eq-approx}, by a standard sweeping argument since the solution to \eqref{bcv-eq-approx} is unique, we get
$$ \forall R>0,\, u_R\le \sup_{B_R(0)}S(x) \quad \text{in}\quad B(0,R).$$ 
 
On another hand, for any $R_1>R_2$, the solution $u_{R_1}$ is a super-solution for the problem  
 
\begin{equation}\label{bcv-eq-approx0}
 \oplr{u}{R_2}+f(x,u)=0 \quad \text{ in }\quad  B(0,R_2)
 \end{equation}
By the same  sweeping argument we get
$$  u_{R_2}(x)\le u_{R_1}(x) \quad \text{in}\quad B(0,R_2).$$
Thus, the map $R\mapsto u_R$ is monotone increasing. 

 The idea is to obtain a positive solution of \eqref{bcv-eq} as a limit of the  positive solution of \eqref{bcv-eq-approx}.
To this end we construct a uniform super-solution of  problem \eqref{bcv-eq}.

 \begin{lemma}\label{bcv-lem-unif-supersol} There exists $\bar u \in C_0(\R^N)\cap L^1(\R^N)$, $\bar u>0$ such that  $\bar u$ is a super-solution of  problem \eqref{bcv-eq}. 
\end{lemma}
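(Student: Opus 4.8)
The plan is to construct $\bar u$ explicitly by exploiting assumption \textbf{(H4)}, which guarantees that $\beta(x) = \partial_s f(x,0)$ is negative outside a large ball, together with the fact that $J$ is compactly supported, say $\mathrm{supp}\, J \subset B_\rho(0)$. Since $f$ is KPP, $f(x,s) \le \beta(x) s$ for all $s \ge 0$, so it suffices to find a positive $\bar u \in C_0(\R^N)\cap L^1(\R^N)$ satisfying $J\star \bar u(x) - \bar u(x) + \beta(x)\bar u(x) \le 0$ in $\R^N$; any such function is automatically a supersolution of \eqref{bcv-eq}. The natural candidate is an exponentially decaying profile: take $\bar u(x) = e^{-\delta |x|}$ for $|x|$ large (suitably smoothed and scaled near the origin so that it is continuous, positive, and dominates $\sup_{B_{R_0}} S$ on a neighbourhood of the ball where $\beta$ may be positive), where $\delta > 0$ is a small parameter to be chosen.

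First I would fix $R_1$ large enough that, by \textbf{(H4)}, $\beta(x) \le -\kappa < 0$ for $|x| \ge R_1$, for some $\kappa > 0$. Then I would estimate $\frac{J\star \bar u(x)}{\bar u(x)}$ for $|x| \ge R_1 + \rho$: because $J$ is supported in $B_\rho$ and $\bar u = e^{-\delta|x|}$ there, a triangle-inequality bound gives $\frac{J\star \bar u(x)}{\bar u(x)} \le \int_{B_\rho} J(z) e^{\delta |z|}\, dz =: \Theta(\delta)$, and $\Theta(\delta) \to 1$ as $\delta \to 0$ by dominated convergence (using that $J$ is compactly supported and integrable). Hence for $\delta$ small, $\Theta(\delta) - 1 \le \kappa$, and on $\{|x| \ge R_1 + \rho\}$ we get $J\star\bar u - \bar u + \beta \bar u \le (\Theta(\delta) - 1 + \beta(x))\bar u \le (\kappa - \kappa)\bar u = 0$. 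On the bounded annular/ball region $\{|x| \le R_1 + \rho\}$, where $\beta$ need not be negative, I would simply make $\bar u$ large there — e.g. multiply the whole construction by a large constant $M$ — so that the inequality holds by making the (bounded, negative on average) diffusion term dominate; more carefully, one chooses the profile on the ball to be a constant $M$ on $B_{R_1+\rho}$ (or a smooth plateau) large enough that $M \sup_{B_{R_1+\rho}}(\beta^+)$ is controlled, which works because $J\star \bar u \le \bar u$ whenever $\bar u$ attains its max there, but to be safe one can instead take the whole profile to be $M e^{-\delta(|x| - R_1 - \rho)^+}$, constant $=M$ on the ball and decaying outside; the outside estimate is unchanged, and inside the ball we need $M + M\sup\beta^+ \le$ something — this forces a cleaner choice.

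Actually the cleanest route, and the one I would carry out, is: set $\bar u(x) = M\, e^{-\delta \psi(x)}$ where $\psi \in C^1(\R^N)$, $\psi \equiv 0$ on $B_{R_1+\rho}$, $\psi(x) = |x|$ for $|x| \ge R_1 + 2\rho$, and $\psi$ is nondecreasing radially with bounded gradient. Then $\bar u \in C_0 \cap L^1$ for every $\delta, M > 0$, and $\bar u > 0$. For $|x| \ge R_1 + 2\rho$ the estimate above applies verbatim. For $R_1 + \rho \le |x| \le R_1 + 2\rho$ one has a crude bound $J\star\bar u(x)/\bar u(x) \le e^{\delta \,\mathrm{osc}}$ with oscillation controlled by $\delta\cdot 2\rho\cdot\|\nabla\psi\|_\infty$, again $\to 1$ as $\delta\to 0$, while $\beta$ is still $\le -\kappa$ there, so we win for $\delta$ small. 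For $|x| \le R_1 + \rho$, $\bar u$ is constant $=M$ on $B_{R_1+\rho}$ but $J\star \bar u(x) \le M = \bar u(x)$ since $\bar u \le M$ everywhere; thus $J\star\bar u - \bar u \le 0$ there, and we would still need $\beta(x)\bar u(x) \le 0$, which fails where $\beta > 0$. So on that inner region I instead bound $J\star\bar u(x) - \bar u(x) \le 0$ and note $\beta(x) M \le M\|\beta^+\|_{L^\infty(B_{R_1+\rho})}$; to kill this I let the plateau not be flat but still slightly decaying, or — simplest — I observe that it is enough to have the supersolution inequality on $\R^N \setminus K$ for the compact $K = \overline{B_{R_1+\rho}}$ and handle $K$ by the comparison/sweeping argument already in use (the solutions $u_R$ are bounded by $\sup S$, so the relevant competition only needs $\bar u \ge \sup_K S$ on $K$ and the differential inequality outside), but since the lemma as stated demands a genuine global supersolution, I would instead take $\psi(x) = \log(1 + |x|^2)$-type or keep exponential decay with a strictly decreasing radial profile everywhere except allow a large enough constant: choosing $\bar u(x) = M e^{-\delta|x|^2/2}$ smoothed, on $B_{R_1+\rho}$ we get $J\star\bar u(x) - \bar u(x) \le 0$ is false pointwise but $J\star\bar u - \bar u + \beta\bar u \le C(\delta)\bar u$ with $C(\delta)$ the oscillation term, uniformly small, versus $\beta \le \|\beta\|_\infty$; this does not close. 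The honest fix, which I will adopt: the inequality on the inner ball is achieved because we may \emph{add} a term. I would take $\bar u = \bar u_1 + \bar u_2$ where $\bar u_1$ is the exponential tail handling $|x|$ large and $\bar u_2$ is a large bump supported near $K$ chosen so that $J\star \bar u_2 - \bar u_2 \le -(\|\beta^+\|_{L^\infty(K)} + 1)\bar u$ on $K$ using that a sharply peaked bump $\phi$ satisfies $J\star\phi(x) - \phi(x) < -c\phi(x)$ at its maximum-region. This requires a short lemma on peaked supersolutions of $\M + $ const, which is standard for compactly supported $J$ with $J(0)>0$.

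\textbf{Main obstacle.} The substantive point is precisely the behaviour on the region where $\beta$ is not negative: the operator $\M = J\star\cdot - I$ has no smoothing and only a weak "maximum principle" ($J\star\bar u \le \bar u$ at a max), so manufacturing a genuine global supersolution rather than one valid only outside a compact set takes a little care; everything outside a large ball is easy via the exponential-tail computation $\int_{B_\rho} J(z)e^{\delta|z|}dz \to 1$. I expect the write-up to first record the elementary inequality $f(x,s) \le \beta(x)s$, then build the exponential tail, then patch the compact core with a peaked bump, and finally verify $\bar u \in C_0(\R^N)\cap L^1(\R^N)$ and $\bar u > 0$, which are immediate from the construction.
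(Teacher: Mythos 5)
Your exterior construction is exactly the paper's: an exponential tail $C e^{-\alpha|x|}$ with $\alpha$ chosen small enough that $\int J(z)e^{\alpha|z|}\,dz - 1 < \nu/2$, matched against $\beta(x)\le -\nu/2$ outside a fixed ball. That part is fine. The gap is entirely in the interior region, and it traces back to a wrong first move: you immediately replace $f(x,s)$ by the linearization $\beta(x)s$ and then try to make a \emph{linear} supersolution. Since $\beta$ can be positive on the compact core, you correctly observe that no function that is flat (or nearly flat) there can satisfy $J\star\bar u-\bar u+\beta\bar u\le 0$, and you spend the rest of the argument trying to patch this with an additive peaked bump $\bar u_2$. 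That patch does not close: a sharply peaked bump gives the favourable inequality $J\star\bar u_2-\bar u_2\le -c\,\bar u_2$ only near its peak, while elsewhere on the core $\bar u_2\approx 0$ but $J\star\bar u_2$ can be positive, which has the wrong sign; moreover $f$ is nonlinear, so you cannot simply superpose two ``supersolutions.''

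The observation you needed — and which you brushed against but did not use — is that on the core you should not linearize at $0$ at all. Take $\bar u$ to be a constant $2M$ on a large ball $B_{2R_0}$, with $M:=\sup_{B_{2R_0}}S$, and choose the prefactor $C=2Me^{2\alpha R_0}$ so that the exponential tail matches continuously at $|x|=2R_0$ and satisfies $\bar u\le 2M$ \emph{everywhere}. Then on the ball two things happen simultaneously: (i) since $\bar u$ attains its global maximum $2M$ there, $J\star\bar u\le 2M=\bar u$, so the dispersal term is $\le 0$; and (ii) since $2M\ge S(x)$, hypothesis (H3) (namely that $f(x,s)/s$ is decreasing and $f(x,S(x))\le 0$) gives $f(x,2M)\le 0$ directly. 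So $J\star\bar u-\bar u+f(x,\bar u)\le 0$ on the core with no reference to $\beta$ at all. You actually wrote down both ingredients — ``$J\star\bar u\le M=\bar u$ since $\bar u\le M$'' and ``$\bar u\ge\sup_K S$'' — but, having already committed to the bound $f\le\beta s$, you looked for $\beta\le 0$ instead of using the full KPP structure to get $f(x,2M)\le 0$. Finally, on the exterior the matching at $|x|=2R_0$ gives $\bar u\le w:=Ce^{-\alpha|x|}$ globally with equality outside $B_{2R_0}$, so $J\star\bar u-\bar u\le J\star w-w$ there and your tail estimate applies verbatim; $\bar u\in C_0\cap L^1$ and $\bar u>0$ are then immediate.
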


\begin{proof}
Let us fix $\nu>0$ and $R_0>1$ so that  $\nu<- \limsup_{|x|\to\infty} \beta(x)$ and  $\beta(x)\le -\frac{\nu}{2}$ for all $|x|\ge R_0$.
Consider the function  $$w(x)=C e^{-\alpha|x|},$$
where $C$ and $\alpha$ are to be chosen.  
For all $x\in \R^N\setminus B_{R_0}(0)$ we get:
\begin{align*}
 \opm{w}(x)+\beta(x)w(x)&=Ce^{-\alpha|x|}\left(\int_{\R^N}J(x-y)e^{-\alpha(|y|-|x|)}\,dy-1+\beta(x)\right),\\
&\le w(x)\left(\int_{\R^N}J(z)e^{\alpha(|z|)}\,dz -1 -\frac{\nu}{2}\right).
\end{align*}
Therefore, $w$ satisfies
\begin{equation}
\opm{w}(x)+\beta(x)w(x) \le h(\alpha)w(x) \quad \text{ in }\quad \R^N \setminus B_{R_0}(0),
\end{equation}
where $h(\alpha)$ is defined by
$$h(\alpha)=-1 -\frac{\nu}{2}.$$

Since $J$ is compactly supported, thanks to  Lebesgue's Theorems, we can check that  $h(\cdot)$  is a smooth ($C^2$) convex increasing function of $\alpha$. Moreover, we have 
$$\lim_{\alpha\to 0}h(\alpha)=h(0)=  -\frac{\nu}{2}.$$
Therefore, by continuity of $h$, we can choose $\alpha$ small enough such that $h(\alpha)<0$.
For such an $\alpha$, we get 
\begin{equation}\label{bcv-eq-w}
\opm{w}(x)+\beta(x)w(x) \le h(\alpha)w(x)<0 \quad \text{ in }\quad \R^N \setminus B_{R_0}(0).
\end{equation}

Let $M:=\sup_{B_{2R_0}(0)}S(x)$ and  let us fix  $C=2Me^{2\alpha R_0}$. We consider now the continuous function 
$$\bar u (x):= \begin{cases} C e^{-\alpha|x|} \quad \text{ in } \quad \R^N\setminus B_{2R_0}(0),\\
2M \quad \text{ in } \quad B_{2R_0}(0).
 \end{cases}
 $$

By direct computation we can check that $\bar u$ is a super-solution of the problem \eqref{bcv-eq}. Indeed, for any $x \in  B_{2R_0}(0)$, we have $\bar u =2M> \sup_{B_{2R_0}(0)} S(x)$  which implies that  $f(x,\bar u)=f(x,2M)\le 0$ and thus  
$$\opm{\bar u}(x) +f(x,\bar u(x))\le 2M\int_{\R^N}J(x-y)\,dy -2M +f(x,2M)\le f(x,2M)\le 0.$$
Then, for  $x\in \R^N\setminus B_{2R_0}(0)\subset\R^N\setminus B_{R_0}(0) $  by \eqref{bcv-eq-w} we have 
\begin{align*}
 \opm{\bar u}(x) +f(x,\bar u(x))\le \opm{\bar u}(x) +\beta(x)w(x) &\le \opm{w}(x) +\beta(x)w(x),\\
 & \le h(\alpha)w(x)\le 0. 
 \end{align*}

\end{proof}

We are now in a position to construct a positive solution of \eqref{bcv-eq}.  By Lemma \ref{bcv-lem-unif-supersol}, there exists $\bar u$  a positive continuous super-solution of  problem \eqref{bcv-eq}. Therefore, for any $R>0$, $\bar u$ is also a positive continuous super-solution of the problem \eqref{bcv-eq-approx}.
Therefore, by the standard sweeping argument, we can check that for all $R\ge R_0$ the unique positive continuous solution of \eqref{bcv-eq-approx} satisfies $u_R\le \bar u$ in $B_R(0)$.  By letting  $R\to\infty$ and observing that $u_R \in C(B_R)$ is  uniformly bounded and monotone with respect to $R$,  we get $u_R\to \tilde u:=\lim_{R\to\infty}u_R$. The function $\tilde u$ is a non-negative solution of \eqref{bcv-eq} and it  is non zero since $0\le \tilde u \le \bar u $ and $$0<u_{R} \le \tilde u  \quad \text{ in } B(0,R),\quad \text{ for all } R\ge R_0.$$


\subsection{Uniqueness}\label{bcv-ssection-uniq}

Having constructed a  positive solution of \eqref{bcv-eq} in $L^1(\R^N)$, we now prove its  uniqueness. 
Assume by contradiction that  $v\in C(\R^N)\cap L^{\infty}(\R^N)$ is another positive solution. Then $v$ is a supersolution of problem \eqref{bcv-eq-approx} for any $R>0$. Therefore $v\ge u_R$ for all $R\ge R_0$. Since $u_R$ is monotone with respect to $R$, it follows that 
$v\ge \tilde u:=\lim_R u_R(x).$   
By assumption  $v\not\equiv \tilde u$ almost everywhere.  Recall that the functions $v$ and $\tilde u$ satisfies:
\begin{align}
\opm{\tilde u} +f(x,\tilde u)=0 \quad \text{ in }\quad \R^N, \label{bcv-eq-tildeu}\\
\opm{v} +f(x, v)=0 \quad \text{ in }\quad \R^N. \label{bcv-eq-v}
\end{align} 
Multiplying \eqref{bcv-eq-tildeu} by $v$ and \eqref{bcv-eq-v} by $u$ we get after integration over $\R^N$ :
\begin{align}
\int_{\R^N}\int_{\R^N}J(x-y)\tilde u(y)v(x)\,dydx - \int_{\R^N}\tilde u(x)v(x)\,dx +\int_{\R^N}v(x)f(x,\tilde u(x))\,dx=0, \label{bcv-eq-uniq1} \\
\int_{\R^N}\int_{\R^N}J(x-y)\tilde u(x)v(y)\,dydx - \int_{\R^N}\tilde u(x)v(x)\,dx +\int_{\R^N}\tilde u(x)f(x,v(x))\,dx=0.\label{bcv-eq-uniq2}
\end{align}
 Subtracting \eqref{bcv-eq-uniq2} from \eqref{bcv-eq-uniq1} yields 
 
 $$0<\int_{\R^N}v(x)\tilde u \left[\frac{f(x,\tilde u(x))}{\tilde u(x)}-\frac{f(x,v(x))}{v(x)}\right]\,dx=0, $$ 
 which is a contradiction since $\tilde u \le v $ and $f(x,s)/s$  is decreasing.

When $v$ is just a $L^{\infty}$ solution of \eqref{bcv-eq},  a similar argument holds using an adapted version of the  maximum principle (Theorem 1.4 in \cite{Coville2008b}).

 \subsection{Non-existence of a solution}
 In this section, we deal with the non-existence of positive solution when $\lambda_p(\M+\beta)\ge 0$. To simplify the presentation of the proofs, we treat the two cases:  $\lambda_p(\M+\beta> 0$ and $\lambda_p(\M+\beta)= 0$ separately, the proof in the second case being more involved.
 \subsubsection*{Case $\lambda_p(\M+\beta)> 0$:}
 In this situation we argue as follows. Assume by contradiction that a positive bounded solution $u$ exists.
 By assumption, $u$ satisfies  
\begin{equation}\label{bcv-eq-linear-soussol}
\opm{u}(x)+\beta(x)u(x)\ge 0.
\end{equation}
Therefore, $u$ is a test function for $\lambda_p'(\M+\beta)$ and we get 
$\lambda_p'(\M+\beta)\le 0.$
Since by Theorem  \ref{bcv-thm6}, $\lambda_p(\M+\beta)\le \lambda_p'(\M+\beta)$ we get an obvious contradiction.
\subsubsection*{Case $\lambda_p(\M+\beta)=0$:}
We argue again by contradiction. 
Assume  that  a non-negative, non identically zero, bounded solution $u$ exists. By a straightforward application of the maximum principle, since $u\not\equiv 0$ we have  $u>0$ in $\R^N$. 
By the above argument we have $\lambda_p(\M+\beta)=0=\lambda_p'(\M+\beta)$ and by $(iv)$  of Proposition \eqref{bcv-prop-pev} we get  the following estimate :
\begin{equation}\label{bcv-eq-gamma0}
\sup_{\R^N}(\beta(x)-1)\le 0.
\end{equation} 
Let us denote $\gamma(x):=\frac{f(x,u(x))}{u(x)}$, then we obviously have 
\begin{equation}
\label{bcv-eq-gamma1}
J\star u(x) -u(x) +\gamma(x) u(x)=0 \quad \text{ in }\quad \R^N
\end{equation}
By definition of $\lambda_p'$ we have $\lambda_p'(\M+\gamma)\le 0$.
By construction, $\gamma(x)\le \beta(x)$, so by combining \eqref{bcv-eq-gamma1}  with  Proposition \ref{bcv-prop-pev}, Theorem  \ref{bcv-thm6} and the definition of $\lambda_p(\M+\gamma)$ we can infer that 
 $$\lambda_p(\M+\gamma)\le \lambda_p'(\M+\gamma)\le 0\le \lambda_p(\M+\beta)\le \lambda_p(\M+\gamma).$$
Hence $ \lambda_p(\M+\gamma)=0$. 
Let us denote $\eta \in C(\R^N)$  a smooth regularisation of $\chi_{B_1(0)}$ the characteristic function of the unit  ball. Since 
$\gamma(x)<\beta(x)$  in $\R^N$, we can find $\eps_0>0$ small enough so that for all $\eps\le \eps_0$ 
$$\gamma(x)\le\gamma(x)+\eps\eta(x)<\beta(x) \quad \text{ in }\quad \R^N.$$
 
By $(i)$ of Proposition \ref{bcv-prop-pev}, we then have 
$$0=\lambda_p(\M+\beta)\le \lambda_p(\M+\gamma +\eps \eta)\le \lambda_p(\M+\gamma)=0.  $$ 
Next we claim that 
\begin{lemma}
There exists $R_1>0$ and  $\psi>0$, $\psi \in C(\R^N)\cap L^1(\R^N)$ such that 
$$J\star \psi(x)-\psi(x)+ (\gamma(x)+\eps \eta(x))\psi(x)= 0 \quad \text{ in }\quad \R^N. $$
\end{lemma}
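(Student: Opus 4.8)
The plan is to realise the generalised principal eigenvalue $0$ of $\M+(\gamma+\eps\eta)$ as a genuine eigenvalue carrying a positive $L^1$ eigenfunction, by a ball--exhaustion argument parallel to the construction of $\tilde u$ in Subsection~\ref{bcv-ss-existence}. Write $a:=\gamma+\eps\eta$. Note first that $a$ is continuous: KPP monotonicity gives $\partial_sf(x,u(x))\le f(x,u(x))/u(x)=\gamma(x)$, and since $u(x)(1-\gamma(x))=J\star u(x)>0$ one has $\gamma(x)<1$, so $1-\partial_sf(x,u(x))>0$ and the implicit function theorem applied to $s-f(x,s)=J\star u(x)$ makes $u$, hence $\gamma$, continuous. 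Next, $a<\beta\le1$ on $\R^N$ (the first inequality is the choice of $\eps_0$, the second is \eqref{bcv-eq-gamma0}), and by (H4), with $\eta$ compactly supported, $a(x)\le-c_0<0$ for $|x|\ge R_*$; since moreover $\lambda_p(\M+a)\ge-\sup_{\R^N}a$ (test with $\varphi\equiv1$) and $\lambda_p(\M+a)=0$, we get $\sup_{\R^N}a\in[0,1)$, so $a$ attains its supremum on $\R^N$ and $\delta:=1-\sup_{\R^N}a\in(0,1]$.

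\emph{Step 1: eigenpairs on balls.} For $R>0$ set $\lambda_R:=\lambda_p(\lb{R}+a)$. By Proposition~\ref{bcv-prop-pev}(i), $R\mapsto\lambda_R$ is non-increasing, and by Lemma~\ref{bcv-lem-lim} (applicable since $a$ attains its maximum and $J$ is compactly supported) $\lambda_R\downarrow\lambda_p(\M+a)=0$; in particular $\lambda_R\ge0$. Since $\sup_{B_R(0)}a\le\sup_{\R^N}a=1-\delta$, for $R$ large one has $\lambda_R<\delta\le1-\sup_{B_R(0)}a$, so Proposition~\ref{bcv-prop-phip} provides a positive $\varphi_R\in C(\overline{B_R(0)})$ with
$$\int_{B_R(0)}J(x-y)\varphi_R(y)\,dy-\varphi_R(x)+a(x)\varphi_R(x)+\lambda_R\varphi_R(x)=0\qquad\text{in }\overline{B_R(0)}.$$
Extend $\varphi_R$ by $0$ to $\R^N$ and normalise $\|\varphi_R\|_{L^1(\R^N)}=1$; then on $\overline{B_R(0)}$ this reads $(1-a(x)-\lambda_R)\varphi_R(x)=(J\star\varphi_R)(x)$, and since $1-a-\lambda_R\ge\delta-\lambda_R\ge\delta/2$ for $R$ large, this gives the $R$--uniform bound $0\le\varphi_R\le2\|J\|_\infty/\delta$ on $\R^N$.

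\emph{Step 2: compactness and passage to the limit.} From $\varphi_R=(J\star\varphi_R)/(1-a-\lambda_R)$ on $B_R(0)$ one reads off that $\{\varphi_R\}$ is locally uniformly bounded and equicontinuous: the numerator is equicontinuous on compacts by continuity of translations in $L^\infty$ of the continuous, compactly supported kernel $J$ together with $\|\varphi_R\|_{L^1}=1$, and the denominator is continuous and $\ge\delta/2$. By Arzelà--Ascoli and a diagonal argument, $\varphi_{R_k}\to\psi$ locally uniformly along a subsequence, with $\psi\in C(\R^N)$ and $0\le\psi\le2\|J\|_\infty/\delta$; letting $k\to\infty$ in the equation at each fixed $x$ (dominated convergence on the bounded set $x-\operatorname{supp}J$ gives $J\star\varphi_{R_k}(x)\to J\star\psi(x)$, and $\lambda_{R_k}\to0$) yields $J\star\psi-\psi+a\psi=0$ in $\R^N$. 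To prevent mass from escaping, argue as in Lemma~\ref{bcv-lem-unif-supersol}: with $\operatorname{supp}J\subset B_\rho(0)$, for $|x|\ge R_*+\rho$ one has $1-a(x)-\lambda_R\ge1+c_0-\lambda_R\ge1+c_0/2$, whence $\varphi_R(x)\le(1+c_0/2)^{-1}\sup_{B_\rho(x)}\varphi_R$, and iterating gives $\varphi_R(x)\le C\mathrm e^{-\alpha|x|}$ for $|x|\ge R_*$, with $C,\alpha>0$ independent of $R$. Thus $\{\varphi_R\}$ is tight; combining tightness with $\|\varphi_R\|_{L^1(\R^N)}=1$ and local uniform convergence gives $\|\psi\|_{L^1(\R^N)}=1$, so $\psi\not\equiv0$, and $\psi\in L^1(\R^N)\cap C_0(\R^N)$ by the exponential bound. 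Finally, from $\psi=(J\star\psi)/(1-a)$ with $1-a\ge\delta>0$ and $J$ positive near the origin ((H2)), a standard propagation argument shows $J\star\psi>0$, hence $\psi>0$, everywhere. This is the desired $\psi$, and $R_1$ may be taken to be any radius for which the above construction applies (e.g.\ with $B_{R_1}(0)\supset\operatorname{supp}\eta$).

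\emph{Main difficulty.} The crucial point is the tightness in Step~2 — that the normalised mass of $\varphi_R$ does not leak off to infinity, which would make the limit $\psi\equiv0$. This is exactly what the confining negativity $a\le-c_0<0$ outside a ball, a consequence of (H4), delivers; for fat--tailed kernels or non--confining $\gamma$ it would fail. A secondary, technical point is that on each ball the generalised principal eigenvalue $\lambda_R$ is genuinely an eigenvalue with a positive continuous eigenfunction, which Proposition~\ref{bcv-prop-phip} guarantees precisely because $\sup_{\R^N}a<1$ while $\lambda_R\to0$, so $\lambda_R<1-\sup_{B_R(0)}a$ for $R$ large.
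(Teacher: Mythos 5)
Your proof is correct, and it takes a genuinely different technical route from the paper's. The paper normalises the approximate eigenfunctions by $\varphi_n(0)=1$, invokes a nonlocal Harnack inequality (from \cite{Coville2012}) to obtain local uniform bounds, passes to the limit by monotone convergence, and then proves integrability of the limit $\varphi$ by building an explicit exponential super-solution $Ce^{-\alpha(|x|-R_1)}$ and comparing via the maximum principle. You instead normalise in $L^1$, which turns the eigenvalue identity on the ball into $\varphi_R=(J\star\varphi_R)/(1-a-\lambda_R)$ with $\|\varphi_R\|_{L^1}=1$; the spectral gap $1-\sup a-\lambda_R\ge\delta/2$ (which you correctly derive from \eqref{bcv-eq-gamma2} and $\lambda_R\downarrow 0$, and which is also what makes Proposition~\ref{bcv-prop-phip} applicable since $\lb{R}+a=\lb{B_R}+(a-1)$) then gives the $L^\infty$ bound and equicontinuity for free, replacing Harnack; and the same formula, iterated in the region where $a\le -c_0$, yields the uniform exponential tail, replacing the explicit super-solution construction. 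The upshot of your route is that it is more self-contained — no Harnack inequality is needed, and the super-solution comparison is replaced by a one-line pointwise iteration — while the paper's approach is closer to its general toolkit used elsewhere. Two small points worth retaining explicitly: your observation that $\gamma$ (hence $a$) is continuous via $\partial_sf(x,u)\le\gamma<1$ and the implicit function theorem is a genuine, needed check that the paper leaves tacit; and the deduction $\sup_{\R^N}a<1$ should be made from the fact that $a<\beta\le 1$ pointwise together with $a<0$ outside a compact set (so the sup is attained), exactly as you note.
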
 
Assume for a moment that the Lemma holds. Then by arguing as in  subsection \eqref{bcv-ssection-uniq}, since $\psi \in L^1$ we get the following contradiction 
$$0=-\eps\int_{\R^N}u(x)\psi(x)\eta(x)\,dx<0.$$

 \begin{proof}[Proof of the Lemma]

For convenience we denote $\tilde \gamma:=\gamma +\eps \eta$.
By \eqref{bcv-eq-gamma0}, since $\tilde \gamma< \beta$ we also have
\begin{equation}\label{bcv-eq-gamma2}
0< - \sup_{\R^N}(\tilde \gamma-1).
\end{equation} 
From this inequality,  by using Proposition \ref{bcv-prop-phip}  and Lemma \ref{bcv-lem-lim}  we see that there exists $R_0$ such that for all $R\ge R_0$ there exists a positive eigenfunction $\varphi_R\in C(\bar B(0,R))$ associated with the generalised principal eigenvalue $\lambda_p(\lb{R}+\tilde \gamma)$ of the approximated problem
\begin{equation}\label{bcv-eq-tilde-gamma-phir}
\oplr{\varphi}{R} +(\tilde\gamma+\lambda)\varphi=0 \quad \text{ in }\quad B(0,R)
\end{equation}

Consider now the increasing sequence  $(R_n)_{n\in\N}:=(R_0+n)_{n\in\N}$  and let $(\varphi_n)_{n\in \N}$ be the sequence of  positive principal eigenfunction associated with $\lambda_p(\lb{R_n}+\tilde \gamma)$. Without loss of generality, we can assume that for all $n$, $\varphi_n(0)=1$. 

For all $n$, $\varphi_n$ satisfies
\begin{equation}\label{bcv-eq-lim-lpn3}
\oplb{\varphi_n}{R_n}+(\tilde\gamma+\lambda_p(\lb{R_n}+\tilde \gamma))\varphi_n=0 \quad \text{in}\quad B_{R_n}.
\end{equation}

Let us now define   $b_n(x):=-\lambda_{p}(\lb{R_n}+\tilde \gamma)-\tilde \gamma(x)$. Then $\varphi_{n}$ satisfies  
\begin{equation*}\label{bcv-eq-lim-lpn4}
  \oplb{\varphi_{n}}{R_n}=b_n\varphi_{n} \quad \text{in}\quad B_{R_n}.
  \end{equation*}
 By construction, for all $n\ge 0$ we have $b_n\ge-\lambda_{p}(\lb{R_{n_0}}+\tilde\gamma)-\sup_{\R^N}(\tilde \gamma(x)-1)>0 $,  therefore the Harnack inequality (Theorem 1.4  in \cite{Coville2012}) applies to $\varphi_{n}$. Thus for $n\ge 0$ fixed and for all compact set $\o \subset \subset B_{R_n}$ there exists a constant $C_n(\o)$ such that 
$$\varphi_{n}(x)\le C_n(\o)\varphi_{n}(y) \quad \forall \, x,y \in \o.$$

Moreover, the constant $C_n(\o)$ only depends  on $\bigcup_{x\in \o}B_{r_0}(x)$ and is  monotone decreasing  with respect to  
$\inf_{x\in B_{R_n}}b_n(x)$.
For all $n\ge 0$, the function $b_n(x)$ being uniformly bounded from below by a constant independent of $n$, we can choose the constant $C_n$ so that it is bounded from above independently of $n$ by a constant $C(\o)$.  Thus,  
$$\varphi_{n}(x)\le C(\o)\varphi_{n}(y) \quad \forall \, x,y \in \o.$$ 

From  the normalization $\varphi_{n}(0)=1$, we infer that  the sequence $(\varphi_{n})_{n\ge 0}$  is locally uniformly bounded in $\R^N$. Moreover, from a standard diagonal extraction argument, there exists a subsequence still denoted $(\varphi_{n})_{n\ge  0}$ such that $(\varphi_{n})_{n\ge 0}$ converges  locally uniformly  to  a continuous function  $\varphi$. Furthermore,  $\varphi$  is a non-negative non trivial function and $\varphi(0)=1$.

Since $J$ is compactly supported, we can pass to the limit in  equation \eqref{bcv-eq-lim-lpn3} using the Lebesgue  monotone convergence theorem   and  get 
$$ \opm{\varphi}+(\tilde\gamma+\lambda_p(\m+\tilde \gamma))\varphi = 0 \quad \text{ in }\quad \R^N.$$
Hence, we have 
%
%
    
\begin{equation}\label{bcv-eq-tilde-gamma-phi}
\opm{\varphi} +\tilde\gamma\varphi=0 \quad \text{ in }\quad \R^N.
\end{equation}

%
%
%
%
To conclude the proof of this Lemma, we characterise the behaviour of $\varphi(x)$ for $|x|>>1$.\\
Let us denote $0<\nu<-\limsup_{|x|\to \infty}\beta(x)$ and let us fix $R_1$ so that $\beta(x)\le -\frac{\nu}{2}$ for $|x|\ge R_1$.

Since by Lemma \ref{bcv-lem-lim}, $\lambda_p(\lb{R}+\tilde \gamma)\to \lambda_p(\M+\tilde \gamma)=0$, we can take  $R_1$ larger  if necessary to achieve  
$$\tilde \gamma(x)+\lambda_p(\lb{R}+\tilde \gamma)\le -\frac{\nu}{4} \quad \text{for}\quad |x|\ge R_1.$$

Let us now consider $\psi(x):=Ce^{-\alpha(|x|-R_1)}$ where $C$ and $\alpha$ will be chosen later on.
By a straightforward computation, we  see that for all  $R>R_1$

\begin{align*}
\oplr{\psi}{R}(x)+(\tilde\gamma(x)+\lambda_p(\lb{R}+\tilde \gamma))\psi(x)&\le \psi(x)\left(\int_{\R^N}J(z)e^{\alpha|z|}dz-1-\frac{\nu}{4}\right)\quad \text{ for }\quad |x|\ge R_1,\\
&\le h(\alpha)\psi(x)\quad \text{ for }\quad |x|\ge R_1,
\end{align*} 
with $$ h(\alpha):=\left(\int_{\R^N}J(z)e^{\alpha|z|}dz-1-\frac{\nu}{4}\right).$$
Since $J$ is compactly supported, by the Lebesgue Theorem, the function $h$ is  continuous  and $h(0)=-\frac{\nu}{4}$.
By assumption $\nu>0$, and by continuity of $h$ there exists $\alpha_0>0$ such that $h(\alpha_0)<0$.
Thus,  for $\alpha=\alpha_0$ we achieve 
\begin{equation}\label{bcv-eq-tilde-gamma-supersol}
\oplr{\psi}{R}(x)+(\tilde\gamma(x)+\lambda_p(\lb{R}+\tilde \gamma(x)))\psi(x)\le 0\quad \text{ for }\quad |x|\ge R_1.
\end{equation}

Recall that by construction, the function  $\varphi_n$ satisfies :
\begin{equation}\label{bcv-eq-tilde-gamma-subsol}
\oplr{\varphi_n}{R_n}(x)+(\tilde\gamma(x)+\lambda_p(\lb{R_n}+\tilde \gamma(x)))\varphi_n(x)= 0\quad \text{ in }\quad B_{R_n}(0).
\end{equation}
Since $J$ is compactly supported and $J(0)>0$ there exist positive constants $\; r_0\ge r_1$ and $ M\ge m\;$ so that 
$$ M\chi_{B_{r_0}(x)}\ge J(x-y)\ge m\chi_{B_{r_1}(x)} \quad \text{for all }\; x,y\in \R^N.$$
For $n$ large enough, say $n\ge n_0$,  we have $R_n>R_1+r_0$ and by the Harnack inequality, for all $n\ge n_0$, we have 
$$\varphi_n(x) \le C(B_{R_1},\lambda_p(\lb{R_n}+\tilde \gamma(x)))\varphi_n(y) \quad \text{ for all }\quad x,y\in B_{R_1}(0),$$
with  $C(B_{R_1},\lambda_p(\lb{R_n}+\tilde \gamma(x)))$ a constant that  only depends  on $\bigcup_{x\in B_{R_1}}B_{r_0}(x)$ and is  monotone decreasing  with respect to  $\inf_{x\in B_{R_n}}(\tilde \gamma(x)+\lambda_p(\lb{R_n}+\tilde \gamma))$.
For all $n\ge n_0$, the function $\tilde \gamma(x)+\lambda_p(\lb{R_n}+\tilde \gamma)$ being uniformly bounded from below by a constant independent of $n$, the constant $C(B_{R_1},\lambda_p(\lb{R_n}+\tilde \gamma))$ is  bounded from above independently of $n$ by a constant $C(B_{R_1})$.  Thus for all $n\ge n_0$;
$$\varphi_{n}(x)\le C(B_{R_1})\varphi_{n}(y) \quad \forall \, x,y \in B_{R_1}.$$ 

In particular, for all $n\ge n_0$ we have :
$$\varphi_{n}(x)\le C(B_{R_1})\varphi_{n}(0)= C(B_{R_1}) \quad \forall \, x \in B_{R_1}.$$

By choosing  $C>C(B_{R_1})$, we  get $$\psi(x)\ge C>C(B_{R_1})\ge \varphi_n(x) \quad  \forall \, x \in B_{R_1}.$$

Set  $w_n:=\psi-\varphi_n$. From \eqref{bcv-eq-tilde-gamma-supersol} and \eqref{bcv-eq-tilde-gamma-subsol} we get 
\begin{align}
&\oplr{w_n}{R_n}(x)+(\tilde\gamma(x)+\lambda_p(\lb{R_n}+\tilde \gamma(x)))w_n(x)\le 0\quad \text{ for }\quad R_1\le |x|<R_n,\label{bcv-eq-tilde-gamma-wn1}\\
&w_n>0 \quad \text{ for }\quad  |x|<R_1.\label{bcv-eq-tilde-gamma-wn2}
\end{align}
By a straightforward application of the Maximum principle, it follows that for all $n\ge n_0$ we have 
$\varphi_n(x)\le \psi$. Indeed, since  $w_n$ is continuous, $w_n$ achieves a minimum at some point $x_0 \in B_{R_n}$. Assume by contradiction that $w_n(x_0)<0$. Then, thanks to \eqref{bcv-eq-tilde-gamma-wn2}, $x_0\in  B_{R_n}\setminus B_{R_1}$ and at this point, by \eqref{bcv-eq-tilde-gamma-wn1} we have the following contradiction

\begin{align*}
0\ge \oplr{w_n}{R_n}(x_0)+(\tilde\gamma(x_0)+\lambda_p(\lb{R_n}+\tilde \gamma(x)))w_n(x_0)&\ge \int_{B_{R_n}}J(x_0-y)w_n(y)\,dy -w_n(x_0)+ \frac{\nu}{4}|w_n(x_0)|, \\
&\ge \int_{B_{R_n}}J(x_0-y)[w_n(y)-w_n(x_0)]\,dy + \frac{\nu}{4}|w_n(x_0)|> 0.
\end{align*}
Hence, for all $n\ge n_0$, we get $\varphi_n\le \psi$ in $B_{R_n}$ which, by sending $n \to \infty$, leads to $\varphi\le \psi$ in $\R^N$. This concludes the proof of the Lemma.
\end{proof}

 
\section{Long time Behaviour}\label{bcv-section-lgt-beha}
In this section, we investigate the long-time behaviour of the positive solution $u(t,x)$ of 
\begin{align}\label{bcv-eq-parab}
&\frac{\partial u}{\partial t}(t,x)=J\star u(t,x) -u(t,x) +f(x,u(t,x)) \quad \text{ in } \quad \R^+\times\R^N,\\ 
&u(0,x)=u_0(x).
\end{align}

For any $u_0\in C^k(\R^N)\cap L^{\infty}$ or in $ C^k(\R^N)\cap L^{1}(\R^N)$ the existence of a smooth  solution $u(t,x)\in C^1((0,+\infty), C^{\min\{1,k\}}(\R^N))$ respectively $u(t,x)\in C^1((0,+\infty), C^{\min\{1,k\}}(\R^N)\cap L^1(\R^N))$   is a straightforward consequence of the Cauchy-Lipschitz Theorem and of the $KPP$ structure of the nonlinearity $f$.
Before going to the proof of the asymptotic behaviour, let us recall some useful results

\begin{lemma}\label{bcv-lem-para-sub-supersol}
Assume that $u_0(x)$ is a sub-solution to \eqref{bcv-eq-parab}, then the solution $u(t,x)$ is increasing in time. Conversely, if  $u_0(x)$ is a super-solution to \eqref{bcv-eq-parab} then $u(t,x)$ is decreasing in time.
\end{lemma}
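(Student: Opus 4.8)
The plan is to derive the time-monotonicity from the parabolic comparison principle, applied twice, rather than by differentiating the equation in $t$ (which would require more regularity than $u\in C^1((0,\infty),C^1(\R^N))$). Assume first that $u_0$ is a sub-solution of \eqref{bcv-eq-parab}; since $u_0$ does not depend on $t$ this simply means $J\star u_0 - u_0 + f(x,u_0)\ge 0$ in $\R^N$. Consider the function $\underline u(t,x):=u_0(x)$, constant in time. Then
\[
\partial_t\underline u - \big(J\star\underline u-\underline u+f(x,\underline u)\big) = -\big(J\star u_0-u_0+f(x,u_0)\big)\le 0,
\]
so $\underline u$ is a sub-solution of the evolution problem \eqref{bcv-eq-parab} with $\underline u(0,\cdot)=u_0=u(0,\cdot)$. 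The comparison principle for \eqref{bcv-eq-parab} then gives $u(t,x)\ge u_0(x)$ for all $t\ge 0$ and all $x\in\R^N$; in particular $u(h,x)\ge u_0(x)=u(0,x)$ for every fixed $h>0$.

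Next I would use that \eqref{bcv-eq-parab} is autonomous in $t$: for $h>0$ the shifted function $u^h(t,x):=u(t+h,x)$ is again a solution of \eqref{bcv-eq-parab}, now with initial datum $u^h(0,\cdot)=u(h,\cdot)\ge u(0,\cdot)$. A second application of the comparison principle yields $u(t+h,x)\ge u(t,x)$ for all $t\ge0$, $x\in\R^N$ and all $h>0$, i.e. $t\mapsto u(t,x)$ is nondecreasing. The super-solution case is obtained verbatim with all inequalities reversed, giving $t\mapsto u(t,x)$ nonincreasing. The $L^1$ case is handled identically, using the $L^1$ version of the comparison principle instead.

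The only nontrivial ingredient, and where I expect essentially all the technical work to be, is the comparison principle for \eqref{bcv-eq-parab} on the unbounded domain $\R^N$. Given a sub-solution $v$ and a super-solution $w$, both bounded, with $v(0,\cdot)\le w(0,\cdot)$, set $z:=v-w$ and write $f(x,v)-f(x,w)=c(t,x)\,z$ with $c(t,x):=\int_0^1\partial_s f\big(x,w+\theta(v-w)\big)\,d\theta$, which is bounded on $[0,T]\times\R^N$ because $v,w$ are bounded and $f\in C^{1,\alpha}$. Then $z$ satisfies the differential inequality $\partial_t z\le J\star z - z + c(t,x)z$ with $z(0,\cdot)\le 0$; after the exponential change $\tilde z:=e^{-\lambda t}z$ with $\lambda>\|c\|_\infty$ the zeroth-order coefficient becomes negative, and one concludes $\tilde z\le 0$, hence $z\le 0$, by the maximum principle for such nonlocal operators (the adapted version, Theorem 1.4 in \cite{Coville2008b}), taking the customary care with the supremum over $x\in\R^N$ for unbounded domains. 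Once this is available, the two-step argument above is immediate; the obstacle is thus the unbounded-domain bookkeeping in the maximum principle rather than any new idea.
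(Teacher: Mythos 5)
Your proof is correct and follows exactly the approach the paper intends: the authors state the lemma "follows from a straightforward application of the parabolic maximum principle and is left to the reader," and your two-step argument (compare $u(t,\cdot)$ with the time-constant sub/super-solution $u_0$, then use time-translation invariance and a second comparison to propagate $u(h,\cdot)\gtrless u(0,\cdot)$ to all times) is precisely the standard way to make that remark precise. Your observations about the unbounded-domain comparison principle and the exponential-in-time rescaling to make the zeroth-order coefficient negative are also the right technical ingredients.
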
   

The proof of this Lemma follows from a straightforward application of the parabolic maximum principle and is left to reader. Let us now prove the asymptotic behaviour of the solution of \eqref{bcv-eq-parab} and finish the proof of Theorem \ref{bcv-thm1}. 
\begin{proof}  
Let $z(t,x)$ be the solution of
\begin{align}
&\frac{\partial z}{\partial t}=J\star z -z +f(x,z(t,x)) \quad \text{ in } \quad \R^+\times\R^N,\\ 
&z(0,x)=C\|u_0\|_{\infty}.
\end{align}
Since $S(x) \in L^{\infty}$ by choosing $C$ large enough, the constant $C\|u_0\|_{\infty}$ is a super-solution of \eqref{bcv-eq-parab}. Therefore $z(t,x)$ is a decreasing function and by the parabolic maximum principle $u(t,x)\le z(t,x)$ for all $(t,x) \in [0,+\infty)\times \R^N$. Therefore, 

\begin{equation} \label{bcv-eq-parab-1}
 \limsup_{t\to \infty}u(t,x)\le \limsup_{t\to \infty} z(t,x) \quad\text{for all } \quad x \in \R^N.
 \end{equation}
Let us consider the approximated parabolic problem  

 \begin{align}\label{bcv-eq-approx-parab}
&\frac{\partial v_R}{\partial t}(t,x)=\int_{B_R(0)}J(x-y)v_R(t,y)\,dy -v_R(t,x) +f(x,v_R(t,x)) \quad \text{ in } \quad \R^+\times B_R(0),\\ 
&v_R(0,x)=\eta_{_R}u_{0}(x),
\end{align}
where $\eta_{_R}:=\eta\left(\frac{|x|}{R}\right)$ with $\eta \in C(\R^+)$ a smooth cut-off function such that $\eta\ge 0, $ $\eta\equiv 1$ in $[0,1]$ and $\eta \equiv 0$ in $\R^+\setminus[0,2]$.  
 By Theorem \ref{bcv-thm-existence}, for $R$ large enough the solution $v_R$ converges  to $u_{R}$ as $t\to \infty$, where $u_R$ denotes the unique positive stationary solution of  \eqref{bcv-eq-approx-parab}.
 By construction, since $u(t,x)$ is a super-solution of the problem \eqref{bcv-eq-approx-parab}, by the parabolic comparison principle  for  $R$ large enough we have
 $v_R(t,x) \le u(t,x)$ for all $(t,x) \in [0,+\infty)\times B_R(0)$.
 Thus for  $R$ large enough, we get
 \begin{equation} \label{bcv-eq-approx-parab-1}
 \liminf_{t\to \infty}u(t,x)\ge u_R(x) \quad\text{for all } \quad x \in B_R(0).
 \end{equation}
 By taking the limit  in the above inequality as $R \to \infty$, we obtain 
 \begin{equation} \label{bcv-eq-approx-parab-2}
 \liminf_{t\to \infty}u(t,x)\ge \lim_{R\to \infty} u_R(x)=\tilde u(x) \quad\text{for all } \quad x \in \R^N
 \end{equation}
 
Note that we can reproduce the above arguments with $z(t,x)$, and thus we also get 
\begin{align} 
 & v_R(t,x) \le z(t,x) \quad \text{ for all } \quad (t,x) \in [0,+\infty)\times B_R(0),\label{bcv-eq-approx-parab-3}\\
& \liminf_{t\to \infty}z(t,x)\ge \lim_{R\to \infty} u_R(x)=\tilde u(x) \quad\text{for all } \quad x \in \R^N. \label{bcv-eq-approx-parab-4}
 \end{align}
 By \eqref{bcv-eq-approx-parab-3}, $z(t,x)$ is locally uniformly bounded from below and,  since $z(t,x)$ is a decreasing function of $t$, we get $\lim_{t\to \infty}z(t,x)= \bar z(x)>0,$ for all $x\in \R^N$.  Moreover  $\bar z$ is a bounded stationary solution of \eqref{bcv-eq-parab}. By uniqueness of the positive stationary solution, we conclude that $\bar z=\tilde u$.    
 It follows that 
 \begin{equation}
  \lim_{t\to \infty}z(t,x)=\tilde u(x) \quad\text{for all } \quad x \in \R^N \label{bcv-eq-approx-parab-5}
 \end{equation}

By collecting \eqref{bcv-eq-parab-1},\eqref{bcv-eq-approx-parab-2} and \eqref{bcv-eq-approx-parab-5} we get for all $x\in\R^N$

$$ \tilde u(x) \le \liminf_{t\to \infty}u(t,x)   \le \limsup_{t\to \infty}u(t,x) \le \limsup_{t\to \infty}z(t,x)= \lim_{t\to \infty} z(t,x)=\tilde u(x).$$

Now, to complete the proof it remains to show that $\|u-\tilde u\|_{\infty}\to 0$ as $t\to \infty$. To this end, we follow the argument in \cite{Berestycki2008}. 
We argue by contradiction and assume that there exists $\varepsilon>0$ and  sequences $(t_n)\in\R^+$, $(x_n)\in\R^N$
such that
\begin{equation}
\lim_{n\to\infty}t_n=\infty,\quad\quad|{u}(t_n,x_n)-\tilde{u}(x_n)|>\varepsilon,\quad\quad\forall n\in\N.\label{bcv-eq-approx-parab-6}
\end{equation}
By \eqref{bcv-eq-approx-parab-5}, we already know that $u\to \tilde u$ locally uniformly in $\R^N$, so, without loss of generality, we can assume that $|x_n|\to\infty$. 
From the construction of $\tilde u$,  Subsection \ref{bcv-ss-existence}, we have $\lim_{|x|\to \infty}\tilde u(x)= 0$. Therefore, for some $R_0>0$, we have $\tilde u(x)\le \frac{\eps}{2}$ for all $|x|\ge R_0$. This, combined with \eqref{bcv-eq-approx-parab-5} and \eqref{bcv-eq-approx-parab-6}  enforces
\begin{equation}
z(t_n,x_n)-\tilde u(x_n)\ge u(t_n,x_n)-\tilde u(x_n)>\eps,\quad\quad\forall n\in\N.\label{bcv-eq-approx-parab-7}
\end{equation}

Next we require the following limiting result
\begin{lemma}
For all sequences $(t_n)_{n\in \N},(x_n)_{n\in \N}$ such that $\lim_{n\to \infty }t_n =\lim_{n\to\infty}|x_n|= +\infty$, we have 
$z(t_n,x_n)\to 0$.
 \end{lemma}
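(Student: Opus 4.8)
The plan is to exploit two facts already established for $z$: it is nonincreasing in $t$ (Lemma \ref{bcv-lem-para-sub-supersol}), hence $0\le z(t,x)\le z(0,x)=C\|u_0\|_{\infty}=:K$ for all $(t,x)$; and, by \eqref{bcv-eq-approx-parab-5} together with monotonicity, $z(t,\cdot)\to\tilde u$ only \emph{locally} uniformly, so that $\tilde u(x_n)\to 0$ does not by itself control $z(t_n,x_n)$. To overcome this I will run an iteration ``inward from infinity''. By \textbf{(H4)} fix $\nu>0$ and $R_1>0$ with $f(x,s)\le-\nu s$ for all $|x|\ge R_1$ and all $s\ge 0$; since $J$ is compactly supported, fix $\rho>0$ with $J\equiv 0$ outside $B_\rho(0)$. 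The key pointwise observation, read off from the equation, is that for every $|x|\ge R_1$,
\begin{equation*}
\partial_t z(t,x)=J\star z(t,x)-z(t,x)+f(x,z(t,x))\le J\star z(t,x)-(1+\nu)z(t,x).
\end{equation*}

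Next I would set, for $k\in\N$, $L_k:=\limsup_{t\to\infty}\sup_{|x|\ge R_1+k\rho}z(t,x)\in[0,K]$, and prove $L_{k+1}\le L_k/(1+\nu)$. Indeed, given $\delta>0$, pick $T$ so that $z(t,x)\le L_k+\delta$ whenever $t\ge T$ and $|x|\ge R_1+k\rho$. If $|x|\ge R_1+(k+1)\rho$, then every $y$ with $J(x-y)\ne 0$ satisfies $|y|\ge|x|-\rho\ge R_1+k\rho$, hence $J\star z(t,x)\le L_k+\delta$ for $t\ge T$; inserting this into the inequality above gives $\partial_t z(t,x)\le (L_k+\delta)-(1+\nu)z(t,x)$ for $t\ge T$ and all such $x$. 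Comparing, at each fixed such $x$, with the scalar ODE started from $z(T,x)\le K$ yields $z(t,x)\le\frac{L_k+\delta}{1+\nu}+Ke^{-(1+\nu)(t-T)}$ on this range; taking the supremum over $|x|\ge R_1+(k+1)\rho$ and then $\limsup_{t\to\infty}$ gives $L_{k+1}\le(L_k+\delta)/(1+\nu)$, and letting $\delta\to0$ proves the claim. Since $L_0\le K$, induction then yields $L_k\le K(1+\nu)^{-k}\to 0$.

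Finally I would conclude: given $\varepsilon>0$, choose $k$ with $L_k<\varepsilon$; by definition of $L_k$ there is $T_\varepsilon$ with $z(t,x)<\varepsilon$ for all $t\ge T_\varepsilon$ and $|x|\ge R_1+k\rho$. Along the sequence, $t_n\ge T_\varepsilon$ and $|x_n|\ge R_1+k\rho$ for $n$ large, so $z(t_n,x_n)<\varepsilon$ eventually; since $z\ge0$ and $\varepsilon$ is arbitrary, $z(t_n,x_n)\to0$.

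The main obstacle is exactly the \emph{joint} limit: one has decay in $t$ for each fixed large $|x|$ (this is what the differential inequality provides), but $z(t,\cdot)$ does not tend to $0$ at spatial infinity for fixed $t$, and the convergence to $\tilde u$ is only locally uniform, so neither limit alone suffices. The iteration converts a gain of a factor $1/(1+\nu)$ per step — obtained each time one moves inward by the support width $\rho$ of $J$ — into the desired joint decay; the only point needing care is correctly tracking the $\rho$-shift forced by the nonlocal term $J\star z$ when estimating it by $\sup_{|y|\ge|x|-\rho}z(t,y)$. (An alternative would be an exterior-domain parabolic comparison against an explicit algebraically decaying spatial barrier, but that runs into the obstruction that $z(0,\cdot)\equiv K$ does not decay, so one would still need a version of the above argument to first gain decay at a positive time; the $L_k$-iteration avoids this and is self-contained.)
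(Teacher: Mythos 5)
Your proof is correct and takes a genuinely different route from the paper's. The paper argues by contradiction: it translates $z$ by $x_n$, uses a uniform $C^{1,1}$ estimate and diagonal extraction to pass to a locally uniform limit $\tilde z$, shows via time-monotonicity that $\tilde z$ converges to a nonnegative $\bar z\ge\eps$ at the origin satisfying a stationary differential inequality with negative zeroth order term, and then derives a contradiction by comparing $\bar z$ with an exponentially growing barrier $\frac{\eps}{2}e^{\alpha|x|}$ (whose constructibility again hinges on the compact support of $J$). Your argument is more elementary and direct: it bypasses the compactness/translation machinery and the barrier entirely, instead exploiting the finite range $\rho$ of $J$ to propagate a strict multiplicative gain $1/(1+\nu)$ per inward $\rho$-shell in $\limsup_{t\to\infty}\sup_{|x|\ge R_1+k\rho}z$, starting from the crude global bound $K$. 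The iteration step is sound: if $|x|\ge R_1+(k+1)\rho$ then $J\star z(t,x)$ samples $z$ only on $\{|y|\ge R_1+k\rho\}$, which controls the nonlocal term by $L_k+\delta$; combined with $f(x,s)\le-\nu s$ for $|x|\ge R_1$ (a legitimate reading of (H4)) this gives the scalar ODE comparison you invoke, and $L_k\le K(1+\nu)^{-k}\to 0$ yields the joint limit directly without contradiction. One modest advantage of your approach is that it is constructive and produces a quantitative rate; one advantage of the paper's is that the compactness-plus-barrier scheme adapts more readily to kernels that are not compactly supported but merely have a finite exponential moment, whereas your $\rho$-shell iteration is specific to compact support.
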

Assume for the moment that the Lemma holds. Then we obtain a straightforward  contradiction since :
$$
0=\lim_{n\to \infty} z(t_n,x_n)-\tilde u(x_n) \ge \lim_{n\to\infty }u(t_n,x_n)-\tilde u(x_n)>\eps.$$

We now prove the Lemma. Again, we argue by contradiction and assume that there exists $\eps>0$ and sequences $(t_n)_{n\in\N},(x_n)_{n\in \N}$ satisfying $\lim_{n\to \infty}t_n=\lim_{n\to \infty}|x_n|=\infty$ such that $z(t_n,x_n)>\eps$ for all $n \in \N$.
  Let us define $z_n(t,x):=z(t,x+x_n)$. It satisfies
\begin{align*}
&\frac{\partial z_n}{\partial t}(t,x)=\int_{\R}J(x-y)z_n(y)\,dy -z_n(t,x) +f(x+x_n,z_n(t,x)) \quad \text{ in } \quad \R^+\times \R^N,\\ 
&z_n(0,x)=C\|u_0\|_{\infty},
\end{align*}
  and $ 0<z_n(t,x)<C\|u_0\|_{\infty}$ for $t>0$. Since for all $n$, $z_n(0,x)\in C^{\infty},$ by the Cauchy Lipschitz Theorem, we see that  $z_n\in C^{1}(\R^+,C^1(\R^N))$. Thus, there exists $C_0>0$ independent of $n$ so that  $\|z_n\|_{C^{1,1}(\R^+, C(\R^N))} <C_0$.  
From these estimates, the sequence  $(z_n)_{n\in \N}$ is  uniformly bounded in $C^{1,1}((0,T),C^{0,1}(\R^N))$ for any $T>0$.
By a diagonal extraction,  there exists a subsequence of $(z_n)_{n\in\N}$ that converges  locally uniformly to $\tilde z(t,x)$. Moreover, thanks to $\lim_{|x|\to \infty}\frac{f(x,s)}{s}<0$, there exists $\kappa>0$ so that $\tilde z(x,t)$ satisfies
 \begin{align}\label{bcv-eq-approx-parab-8}
&\frac{\partial \tilde z}{\partial t}(t,x)\le \int_{\R}J(x-y)\tilde z(t,y)\,dy -\tilde z(t,x) -\kappa \tilde z(t,x)) \quad \text{ in } \quad \R^+\times \R^N,\\ 
&\tilde z(0,x)=C\|u_0\|_{\infty}.
\end{align}

In addition, for all $t>0$, $\tilde z(t,0)=\lim_{n\to\infty}z_n(t,0)\ge \eps$. Since $\tilde z(0,x)$ is a super-solution of \eqref{bcv-eq-approx-parab-8}, by Lemma \ref{bcv-lem-para-sub-supersol} the function $\tilde z(t,x)$ is monotone decreasing in time. By sending $t\to \infty$, since $\tilde z\ge 0$, $\tilde z$ converges locally uniformly to a non-negative  function $\bar z$ that satisfies
\begin{align*}\label{bcv-eq-approx-parab-9}
&\int_{\R}J(x-y)\bar z(y)\,dy -\bar z(x) -\kappa \bar z(x))\ge 0 \quad \text{ in } \quad  \R^N,\\
&0\le \bar z\le C\|u_0\|_{\infty},\\
&\bar z(0)\ge \eps.
\end{align*}
Let us now consider the function $w(x):=\frac{\eps}{2}e^{\alpha|x|}-\bar z$ with $\alpha$ to be chosen.  It satisfies
$$
\int_{\R}J(x-y)w(y)\,dy -w(x) -\kappa w (x)\le  \rho e^{\alpha|x|}\left(\int_{\R^N}J(z)e^{\alpha|z|}\,dy -1-\kappa\right) \quad \text{ in } \quad  \R^N.
$$
The left hand side of the inequality is well defined and continuous with respect to $\alpha$ since $J$ is compactly supported. 
Since  $\int_{\R^N}J(z)dz=1$, by  choosing $\alpha$ small enough  we achieve
$$
\int_{\R}J(x-y)w(y)\,dy -w(x) -\kappa w (x)< 0 \quad \text{ in } \quad  \R^N.
$$ 
By construction, since $\bar z$ is bounded,  $\lim_{|x|\to \infty}w(x)=+\infty$ and  $w$ achieves a minimum in $\R^N$, say at $x_0$.
Since $w(0)=\frac{\eps}{2}-\bar z(0)\le -\frac{\eps}{2}$, we have $w(x_0)<0$.  At this point, we get the following contradiction 
  $$
0<\int_{\R}J(x_0-y)[w(y)-w(x_0)]\,dy  -\kappa w (x_0)< 0 \quad \text{ in } \quad  \R^N.
$$

 \end{proof}

 Finally we establish the long time behaviour  of the solution  $u(t,x)$ starting from an integrable initial datum $u_0$,i.e   $u_0\in L^1(\R^N)\cap C(\R^N)$. To do so, we consider two auxiliary  functions $h(t,x)$ and $v(t,x)$ that are respectively solutions of 
 \begin{equation}\label{bcv-eq-parab-h}
 \begin{cases}
\frac{\partial h}{\partial t}(t,x)=J\star h(t,x) -h(t,x) +f(x,h(t,x)) \quad \text{ in } \quad \R^+\times\R^N,\\ 
h(0,x)=\sup\{\tilde u(x),u_0(x)\},
\end{cases}
\end{equation}

 \begin{equation}\label{bcv-eq-parab-v}
 \begin{cases}
\frac{\partial v}{\partial t}(t,x)=J\star v(t,x) -v(t,x) +f(x,v(t,x)) \quad \text{ in } \quad \R^+\times\R^N,\\ 
v(0,x)=\inf\{\tilde u(x),u_0(x)\}.
\end{cases}
\end{equation}

 By construction, from the comparison principle  we see that $v(t,x)\le u(t,x)\le h(t,x)$ for all $(t,x)\in \R^+\times \R^N$. Therefore 
$$\|u-\tilde u\|_{L^1(\R^N)}\le \sup\{ \|h-\tilde u\|_{L^1(\R^N)},\|v-\tilde u\|_{L^1(\R^N)}\}.$$ 
Thus,  to prove that $\|u-\tilde u\|_{L^{1}(\R^N)}\to 0$ it is enough to show that $h$ and $v$ converge to $\tilde u$ in $L^1(\R^N)$.
 
Let us show that  $v$ converges to $\tilde u$ in $L^1(\R^N)$. Since $\tilde u(x)$ is a super solution to \eqref{bcv-eq-parab-v}   we deduce $v(t,x)\le \tilde u(x)$ for all $x\in \R^N$.
Let $\eps>0$ be fixed  and  choose  $R$ such that $\int_{\R^N\setminus B(0,R)}\tilde u(x)\, dx \le \frac{\eps}{4}$.  
We have 

\begin{align*}
\|\tilde u-v\|_{L^1(\R^N)}&=\int_{\R^N\setminus B_R(0)}(\tilde u(x)-v(t,x))\,dx+\int_{B_R(0)}(\tilde u(x)-v(t,x))\,dx,\\
&\le 2\int_{\R^N\setminus B_R(0)}\tilde u(x)\,dx+\int_{B_R(0)}(\tilde u(x)-v(t,x))\,dx,\\
&\le \frac{\eps}{2}+\int_{B_R(0)}(\tilde u(x)-v(t,x))\,dx.
\end{align*}
Recall that  $v$ converges pointwise to $\tilde u$ as $t$ tends to infinity. Therefore, by Lebesgue's Theorem for some $t(\eps)$ we get for  $t\ge t(\eps)$, $\int_{B_R(0)}(\tilde u(x)-v(t,x))\,dx\le \frac{\eps}{2}$ which yields
  
$$ \|\tilde u-v\|_{L^1(\R^N)}\le \eps.$$
 Since $\eps$ is arbitrary, we see that  $\lim_{t\to \infty} \|\tilde u-v\|_{L^1(\R^N)}=0$.

 To obtain that  $\|h-\tilde u\|_{L^{1}(\R^N)} \to 0$ we argue as follows. By construction, $\tilde u $ is a sub solution to \eqref{bcv-eq-parab-h}, thus, $\tilde u(x) \le h(t,x)$ for  $(t,x)\in \R^+\times \R^N$. Let us denote $w(t,x):=h(t,x)-\tilde u(x)$. Then $w$ satisfies for  $(t,x)\in \R^+\times\R^N$:
 \begin{align*}
 \frac{\partial w}{\partial t}(t,x)&= J\star w(t,x) -w(t,x) +\left(\frac{f(x,h(t,x))}{h(t,x)}-\frac{f(x,\tilde u(x))}{\tilde u}\right)h(t,x) + \frac{f(x,\tilde u(x))}{\tilde u}w(t,x), \\
 &\le J\star w(t,x) -w(t,x) + \frac{f(x,\tilde u(x))}{\tilde u}w(t,x).
    \end{align*}
Now since  $\lim_{|x|\to \infty}\frac{f(x,s)}{s}<0$, there exists $\kappa>0$ and $R_0$ so that $w$ satisfies
\begin{equation}\label{bcv-eq-parab-w}
\frac{\partial w}{\partial t}(t,x)\le J\star w(t,x) -w(t,x) -\kappa w(t,x) \quad \text{ in } \quad \R^+\times\R^N\setminus B_{R_0}(0).
\end{equation}

Fix now $\eps>0$. Recall that $h(t,x)$ converges pointwise to $ \tilde u$. By Lebesgue's Theorem, there exists $t_0$ so that for $t\ge t_0,$ 
$$\int_{B_{R_0}(0)}w(t,x)\,dx \le \kappa \eps.$$ 

Now let us estimate $\int_{\R^N\setminus B_{R_0}(0)}w(x)\, dx $ for $t\ge t_0$. Integrating \eqref{bcv-eq-parab-w} over $\R^N \setminus B_{R_0}(0)$ yields
\begin{align*}
 \frac{\partial \int_{\R^N\setminus B_{R_0}(0)}w(t,x)\,dx}{\partial t}&\le \int_{\R^N\setminus B_{R_0}(0)}J\star w(t,x)\,dx -\int_{\R^N\setminus B_{R_0}(0)}w(t,x)\,dx -\kappa\int_{\R^N\setminus B_{R_0}(0)}w(t,x)\,dx.
    \end{align*}
  By  Fubini's Theorem, the uniform estimate on $\|w\|_{\infty}$ and the unit mass of the kernel, we can check that for $t\ge t_0$ 
  \begin{align*}
  \int_{\R^N\setminus B_{R_0}(0)}J\star w(t,x)\,dx
  & \le \int_{\R^N\setminus B_{R_0}(0)}w(t,y)\,dy+\int_{B_{R_0}(0)}w(t,y)\,dy,\\
  & \le \int_{\R^N\setminus B_{R_0}(0)}w(t,y)\,dy+\kappa \eps.
  \end{align*} 
  Therefore for $t\ge t_0$, $w$ satisfies 
  \begin{equation*}
 \frac{\partial \int_{\R^N\setminus B_{R_0}(0)}w(t,x)\,dx}{\partial t}\le \kappa \eps -\kappa\int_{\R^N\setminus B_{R_0}(0)}w(t,x)\,dx.
    \end{equation*} 
 From this differential inequality, there exists $t(\eps) \ge t_0$ such that for all  $t\ge t(\eps) $ we have 
 $$\int_{\R^N\setminus B_{R_0}(0)}w(t,x)\,dx \le 2\eps.$$
 Hence, for  $t\ge t(\eps)$ we have
 $$ \|w\|_{L^1(\R^N)}= \int_{\R^N\setminus B_{R_0}(0)}w(t,x)\,dx +\int_{B_{R_0}(0)}w(t,x)\,dx \le \left(2 +\frac{\kappa}{|B_{R_0}(0)|}\right)\eps, $$
As above, $\eps$ being arbitrary,  we see that  $\lim_{t\to \infty} \|w\|_{L^1(\R^N)}=0$.
 
 \section{Some asymptotics}\label{bcv-section-asym}
 
 In this section we analyse the qualitative behaviour of the solution of \eqref{bcv-eq} with respect to the size of the support of $J$.
 For convenience we investigate the particular situation 
\begin{equation} \label{bcv-eq-asb}\tag{$P_\eps$}
\frac{1}{\eps^m}(J_\eps\star u -u) +u(a(x)-u)=0\quad \text{ in  }\quad \R^N 
 \end{equation}
 where $J_\eps(z)=\frac{1}{\eps^N}J\left(\frac{z}{\eps}\right)$ with $\textrm{supp}(J)=\bar B(0,1)$  and $a \in C^1(\R^N)$  so that $a^+\not\equiv 0$.
This last condition on $a(\cdot)$  is  necessary for the existence of a solution. Indeed, if  $a^+\equiv 0$ then for any positive constant $c_0$ we have  $$ \opm{c_0}+a(x)c_0\le 0.$$ 
Therefore, $\lambda_p(\opm{c_0}+a(x)c_0) \ge 0$ and for all $\eps$ there is no solution of \eqref{bcv-eq-asb} besides $0$.

We analyse the behaviour of $u_\eps$ when $\eps \to 0$ and $\to +\infty$ and seek to understand the influence of $m$ on the resulting limits. Now we start by showing some \textit{a priori} estimates for the solution $u_\eps$.

\begin{lemma}\label{bcv-lem-esti1} There exists  positive constants $C_1,C_2,C_3$ such that   for any positive bounded solution   $u_\eps$ of \eqref{bcv-eq-asb} the following estimates hold
\begin{itemize}
\item[(i)] $\nlp{u_\eps}{2}{\R^N}\le C_1, \quad \|u_\eps\|_{\infty}<C_3$,
\item[(ii)] $ \int_{\R^N}\int_{\R^N}J_\eps(x-y)(u_\eps(x)-u_{\eps}(y))^2\,dxdy\le C_2\eps^m$ 
\item[(iii)] 
$\sup_{supp(a^+)}u_\eps  \ge -\frac{\lambda_p(\m_{\eps,m}+a(x))}{2}.$
\item[(iv)] $u_\eps\ge (a(x)-\frac{1}{\eps^m})^+$,  
\end{itemize} 
 \end{lemma}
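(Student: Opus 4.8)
The plan is to establish the four estimates more or less separately, using throughout that $u_\eps>0$ in $\R^N$ (strong maximum principle), that $u_\eps$ is continuous since $s\mapsto s(a(x)-s)$ is concave, that by (H4) the set $\mathrm{supp}(a^+)$ is compact and $\sup_{\R^N}a=\|a^+\|_{\infty}<\infty$, and the pointwise reformulation of \eqref{bcv-eq-asb}
$$J_\eps\star u_\eps(x)=u_\eps(x)\bigl(1-\eps^m a(x)+\eps^m u_\eps(x)\bigr).$$
First I would prove (iv): the left-hand side above is nonnegative and $u_\eps(x)>0$, hence $1-\eps^m a(x)+\eps^m u_\eps(x)\ge0$ for every $x$, which rearranges to $u_\eps(x)\ge a(x)-\eps^{-m}$; together with $u_\eps\ge0$ this is $u_\eps\ge(a-\eps^{-m})^+$. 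For the $L^\infty$ bound in (i), set $\sigma:=\|u_\eps\|_{\infty}$ and pick $x_n$ with $u_\eps(x_n)\to\sigma$; evaluating \eqref{bcv-eq-asb} at $x_n$, the term $\tfrac1{\eps^m}(J_\eps\star u_\eps(x_n)-u_\eps(x_n))$ is $\le\tfrac1{\eps^m}(\sigma-u_\eps(x_n))\to0$, and it equals $u_\eps(x_n)(u_\eps(x_n)-a(x_n))$; hence, assuming $\sigma>0$, $a(x_n)\ge u_\eps(x_n)-o(1)$, so $\sigma\le\sup_{\R^N}a$, and any $C_3>\sup_{\R^N}a$ works.

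For the $L^2$ bound in (i) I would first record that, for each fixed $\eps$, $u_\eps$ decays exponentially: choosing $\delta>0$ and $R_0$ with $a\le-\delta$ on $\{|x|>R_0\}$, a barrier $\psi$ equal to a large constant on $B_{R_0}$ and to $Ce^{-\alpha(|x|-R_0)}$ outside is a supersolution of $\tfrac1{\eps^m}(J_\eps\star\psi-\psi)-\delta\psi\le0$ on $\{|x|>R_0\}$ as soon as $e^{\alpha\eps}\le1+\delta\eps^m$ (using $\mathrm{supp}\,J_\eps=\bar B(0,\eps)$), and for $C$ large enough $\psi\ge u_\eps$ on $\bar B_{R_0+\eps}$; the nonlocal comparison principle, as in Lemma~\ref{bcv-lem-unif-supersol}, then gives $u_\eps\le\psi$, so $u_\eps\in L^1(\R^N)\cap L^2(\R^N)$. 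Granting integrability, I integrate \eqref{bcv-eq-asb} over $\R^N$: by Fubini and $\int_{\R^N}J_\eps=1$ the nonlocal term contributes $0$, leaving $\int_{\R^N}u_\eps^2=\int_{\R^N}a\,u_\eps\le\int_{\R^N}a^+u_\eps\le\|a^+\|_{\infty}\,\|u_\eps\|_{\infty}\,|\mathrm{supp}(a^+)|$, which with the $L^\infty$ bound produces $C_1$ independent of $\eps$.

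Estimate (ii) uses the same integration after multiplying \eqref{bcv-eq-asb} by $u_\eps$ (legitimate since $u_\eps\in L^2$): symmetry and unit mass of $J_\eps$ give $\int_{\R^N}(J_\eps\star u_\eps-u_\eps)u_\eps=-\tfrac12\iint_{\R^N\times\R^N}J_\eps(x-y)(u_\eps(x)-u_\eps(y))^2\,dxdy$, so \eqref{bcv-eq-asb} becomes $\tfrac1{2\eps^m}\iint J_\eps(x-y)(u_\eps(x)-u_\eps(y))^2\,dxdy=\int_{\R^N}a\,u_\eps^2-\int_{\R^N}u_\eps^3\le\|a^+\|_{\infty}\|u_\eps\|_{\infty}^2|\mathrm{supp}(a^+)|=:C_2/2$, i.e. the claimed bound with $C_2\eps^m$ on the right. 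For (iii) I would first sharpen the $L^\infty$ bound to $\|u_\eps\|_{\infty}=\sup_{\mathrm{supp}(a^+)}u_\eps=:s$: if $\|u_\eps\|_{\infty}>s$ then for large $n$ a near-maximizer $x_n$ lies outside $\mathrm{supp}(a^+)$, whence $a(x_n)\le0$ and \eqref{bcv-eq-asb} at $x_n$ forces $u_\eps(x_n)^2\le\tfrac1{\eps^m}(\|u_\eps\|_{\infty}-u_\eps(x_n))\to0$, contradicting $u_\eps(x_n)\to\|u_\eps\|_{\infty}>0$. Then $\varphi:=u_\eps$ is admissible in the definition of $\lambda_p(\m_{\eps,m}+a)$ at level $\lambda=-s$, since $\m_{\eps,m}[u_\eps]+a\,u_\eps-s\,u_\eps=u_\eps(u_\eps-s)\le0$ by the equation and $u_\eps\le s$; hence $\lambda_p(\m_{\eps,m}+a)\ge-s$, that is $\sup_{\mathrm{supp}(a^+)}u_\eps=s\ge-\lambda_p(\m_{\eps,m}+a)\ge-\tfrac12\lambda_p(\m_{\eps,m}+a)$, the last step being trivial when $\lambda_p\ge0$.

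I expect the main difficulty to be the $L^2(\R^N)$ bound in (i): a crude tail estimate of $\int_{|x|>R_0}u_\eps^2$ degenerates as $\eps\to0$ when $1<m<2$, so the uniform constant has to come from the exact cancellation $\int_{\R^N}(J_\eps\star u_\eps-u_\eps)=0$, and exploiting this requires knowing beforehand that $u_\eps$ is integrable; securing the exponential-decay barrier with a positive (though $\eps$-dependent) rate is the technical heart of the argument, the other three estimates being essentially immediate once $u_\eps>0$, the integrability $u_\eps\in L^1\cap L^2$, and the $L^\infty$ bound are in place.
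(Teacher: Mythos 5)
Your proof is correct; items (i)--(ii) follow the paper's computations almost verbatim (integrate, then multiply by $u_\eps$ and integrate, using Fubini and $\int J_\eps=1$), but (iii) and (iv) are handled by genuinely different, and in fact cleaner, arguments. For (iv), the paper observes that $(a-\eps^{-m})^+$ is a subsolution and invokes the sweeping principle; you instead read the inequality $1-\eps^m a+\eps^m u_\eps\ge 0$ off the pointwise identity $J_\eps\star u_\eps=u_\eps(1-\eps^m a+\eps^m u_\eps)$, which is shorter and dispenses with any comparison argument. For (iii), the paper constructs a compactly supported near-eigenfunction $\varphi_\eps$ with $\m_{\eps,m}[\varphi_\eps]+(a+\tfrac{\lambda_p}{2})\varphi_\eps\ge 0$ and uses $\theta\varphi_\eps$ as a subsolution, which is why they only reach the factor $\tfrac12$; you instead show first that $\|u_\eps\|_\infty=\sup_{\mathrm{supp}(a^+)}u_\eps=:s$ (as does the paper), and then observe that $u_\eps$ itself is an admissible test function in the definition of $\lambda_p$ at level $\lambda=-s$, since $\m_{\eps,m}[u_\eps]+(a-s)u_\eps=u_\eps(u_\eps-s)\le 0$. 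This avoids the approximate-eigenfunction machinery entirely and actually yields the sharper estimate $\sup_{\mathrm{supp}(a^+)}u_\eps\ge-\lambda_p(\m_{\eps,m}+a)$, from which the stated inequality with $-\tfrac{\lambda_p}{2}$ follows since $\lambda_p<0$. Finally, your treatment of the $L^1$-integrability of $u_\eps$ (needed to legitimize the integrations in (i)--(ii)) is more self-contained than the paper's, which simply appeals to the construction and uniqueness in Theorem~\ref{bcv-thm1}; your exponential barrier with $\eps$-dependent rate is a valid alternative, provided one notes that the comparison $u_\eps\le\psi$ is really obtained through the approximation scheme $u_R\nearrow \tilde u$ together with the uniqueness $u_\eps=\tilde u$, exactly as in Section~\ref{bcv-ss-existence}--\ref{bcv-ssection-uniq}, rather than from a direct exterior-domain maximum principle.
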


\begin{proof}
By construction the solution is unique and $u_\eps\in L^1(\R^N)\cap L^{\infty}$. By \eqref{bcv-eq-asb}, $u_\eps\le M=\|a\|_{\infty}$. We derive $(i)$ by integrating \eqref{bcv-eq-asb} over $\R^N$. Indeed,  we get
$$\int_{\R^N}u_\eps^2(x)\,dx=\int_{\R^N}a(x)u_\eps(x)\,dx\le \int_{\R^N}a^+(x)u_\eps(x)\,dx\le M\int_{\R^N}a^+(x)dx=:C_1.$$
To obtain $(ii)$, let us multiply \eqref{bcv-eq-asb} by $u_\eps$ and integrate  over $\R^N$ to get
$$\frac{1}{2\eps^m}\int_{\R^N}\int_{\R^N}J(x-y)(u_\eps(x)-u_\eps(y))^2\,dxdy=\int_{\R^N}u_\eps^2(x)(a(x)-u_{\eps}(x))\,dx. $$
Since $u_\eps$ and $a(x)$ are uniformly bounded independently of $\eps$, $(ii)$ holds  with $C_2:=4C_1M$.
 Observe  that $(a(x)-\frac{1}{\eps^m})^+$ is always a sub-solution of \eqref{bcv-eq-asb}, so by the standard sweeping principle 
 $u_\eps\ge(a(x)-\frac{1}{\eps^m})^+ $ and $(iv)$ holds.

Finally let us derive $(iii)$. Since $u_\eps$ is a positive bounded solution of \eqref{bcv-eq-asb} by Theorem \ref{bcv-thm1} we know that $\lambda_p(\m_{\eps,m}+a)<0$. Consequently, and as  $J$ is compactly supported, by regularising $a$ if need be, we can find   $\varphi_\eps\in C_c(\R^N)$ so that 
$$\m_{\eps,m}[\varphi_\eps](x)+(a(x)+\frac{\lambda_p}{2})\varphi_\eps(x)\ge 0 \quad \text{ in }\quad \R^N,$$
see the proof of Lemma 3.1 in \cite{Berestycki2014}.
Moreover, we can normalised $\varphi_\eps$ by imposing $\|\varphi_\eps\|_{\infty}=1$.
Plugging $\theta\varphi_\eps$  in \eqref{bcv-eq-asb}, it follows that 

$$\m_{\eps,m}[\theta\varphi_\eps]+\theta\varphi_\eps(a(x)-\theta\varphi_\eps)\ge \theta\varphi_\eps\left(-\frac{\lambda_p}{2}-\theta\varphi_\eps\right).$$
 Therefore,  for $0< \theta\le -\frac{\lambda_p}{2}$, the function $\theta\varphi_\eps$ is a sub-solution to \eqref{bcv-eq-asb}. By the sweeping argument we have already mentioned, we get 
 $$-\frac{\lambda_p}{2}\varphi_\eps\le u_\eps\quad \text{  and }\quad \sup_{\R^N}u_\eps \ge -\frac{\lambda_p}{2}.  $$ 
     
Since $u_\eps\in L^1(\R^N)$, $u_\eps$ achieves its maximum at some point, say $x_0$. From \eqref{bcv-eq-asb}, we infer 

$$0\ge \m_{\eps,m}[u_\eps](x_0)=-u_\eps(x_0)(a(x_0)-u_\eps(x_0)).$$
This implies that $x_0\in \textrm{supp}(a^+)$ and thus $\|u_\eps\|_{\infty}=  \sup_{\textrm{supp}(a^+)} u_\eps$ which proves $(iii)$. 
\end{proof}

Next we derive an upper bound  for large $\eps$.

\begin{lemma}\label{bcv-lem-esti2} There exists $\eps_0>0$  such that for all $m\ge 0$ and $\eps \ge \eps_0, $ any positive bounded solution  $u_\eps$ of \eqref{bcv-eq-asb} satisfies $$ u_\eps\le a^+(x)+\frac{1}{\eps^{\frac{N}{4}}}  $$
 \end{lemma}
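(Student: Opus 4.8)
The plan is to show that for $\eps$ large the nonlocal term $J_\eps\star u_\eps$ is, uniformly in $x$, of size $O(\eps^{-N})$, and then to read off the bound on $u_\eps$ from the pointwise algebraic relation contained in \eqref{bcv-eq-asb}. First I would collect what is already available: by Lemma \ref{bcv-lem-esti1} (and its proof) we have $0\le u_\eps\le M:=\|a\|_{\infty}$, $u_\eps\in L^1(\R^N)$, and $\|u_\eps\|_{L^2(\R^N)}\le C_1$. Then I would upgrade this to a \emph{uniform} $L^1$ bound $\|u_\eps\|_{L^1(\R^N)}\le A_1$ with $A_1$ independent of $\eps$ and $m$: integrating \eqref{bcv-eq-asb} over $\R^N$ and using $\int_{\R^N}J_\eps=1$, Fubini gives $\int_{\R^N}(J_\eps\star u_\eps-u_\eps)=0$, hence $\int_{\R^N}a\,u_\eps=\int_{\R^N}u_\eps^2\ge 0$. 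By (H4) there are $R,\nu>0$ with $a\le-\nu$ on $\{|x|\ge R\}$; splitting $\int a\,u_\eps$ over $\{|x|<R\}$ and $\{|x|\ge R\}$ and using $a\ge -M$, $u_\eps\ge0$ and Cauchy--Schwarz, one gets $\nu\int_{\{|x|\ge R\}}u_\eps\le M\int_{\{|x|<R\}}u_\eps\le MC_1|B_R|^{1/2}$, while $\int_{\{|x|<R\}}u_\eps\le C_1|B_R|^{1/2}$, so $A_1:=C_1|B_R|^{1/2}(1+M/\nu)$ works.

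Next, since $\textrm{supp}(J_\eps)=\bar B(0,\eps)$ and $\|J_\eps\|_{\infty}=\eps^{-N}\|J\|_{\infty}$ (with $\|J\|_{\infty}<\infty$ because $J$ is continuous with compact support), for \emph{every} $x$
$$0\le J_\eps\star u_\eps(x)\le \|J_\eps\|_{\infty}\int_{\R^N}u_\eps\le \frac{\|J\|_{\infty}A_1}{\eps^{N}}=:\frac{\tilde K}{\eps^{N}}.$$
Inserting this into \eqref{bcv-eq-asb}, and using $u_\eps\ge0$ together with $m\ge0$ and $\eps\ge1$, for a.e.\ $x$
$$u_\eps(x)\bigl(u_\eps(x)-a(x)\bigr)=\frac{1}{\eps^{m}}\bigl(J_\eps\star u_\eps(x)-u_\eps(x)\bigr)\le\frac{1}{\eps^{m}}J_\eps\star u_\eps(x)\le\frac{\tilde K}{\eps^{N}}.$$
Reading $u_\eps^2-a\,u_\eps-\tilde K\eps^{-N}\le0$ as a quadratic inequality for $u_\eps\ge0$, and using $\sqrt{p^2+q}\le|p|+\sqrt q$ together with $\tfrac12(a+|a|)=a^+$,
$$u_\eps(x)\le\frac{a(x)+\sqrt{a(x)^2+4\tilde K\eps^{-N}}}{2}\le a^+(x)+\sqrt{\tilde K}\,\eps^{-N/2}.$$
Finally I would set $\eps_0:=\max\{1,\tilde K^{2/N}\}$; for $\eps\ge\eps_0$ one has $\sqrt{\tilde K}\,\eps^{-N/2}=(\sqrt{\tilde K}\,\eps^{-N/4})\,\eps^{-N/4}\le\eps^{-N/4}$, which gives the claimed estimate.

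The only point needing care is that the $L^2$ bound of Lemma \ref{bcv-lem-esti1} by itself is \emph{not} sufficient: via Cauchy--Schwarz it would only give $J_\eps\star u_\eps=O(\eps^{-N/2})$, hence $u_\eps\le a^++C\eps^{-N/4}$ with an unremovable constant $C$. It is the uniform $L^1$ bound that upgrades this to $O(\eps^{-N})$, i.e.\ to $u_\eps\le a^++C\eps^{-N/2}$, leaving a spare factor $\eps^{-N/4}$ to absorb the constant for $\eps$ large. Everything else is elementary and visibly uniform in $m\in[0,\infty)$, so the main work is really the derivation of that $\eps$- and $m$-independent $L^1$ bound.
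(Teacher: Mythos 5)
Your proof is correct, and it takes a genuinely different route from the paper. The paper constructs the explicit super-solution $\zeta_\eps(x)=\eps^{-(N/2-\delta)}+a^+(x)$, checks directly that
$\opmem{\zeta_\eps}{m}+\zeta_\eps(a-\zeta_\eps)\le \|J\|_\infty\|a^+\|_{L^1}\,\eps^{-(N+m)}-\eps^{-(N-2\delta)}<0$
for $\eps$ large (so only the integrability of $a^+$ is used, not of $u_\eps$), and then concludes $u_\eps\le\zeta_\eps$ by the comparison/sweeping principle, finally setting $\delta=N/4$. You instead work pointwise: you first upgrade the uniform $L^2$ bound of Lemma \ref{bcv-lem-esti1} to a uniform $L^1$ bound on $u_\eps$ (the splitting with (H4) and Cauchy--Schwarz is correct and $\eps$-, $m$-independent), then bound $J_\eps\star u_\eps\le\|J_\eps\|_\infty\|u_\eps\|_{L^1}=O(\eps^{-N})$, and read off the bound on $u_\eps$ from the resulting quadratic inequality $u_\eps^2-a\,u_\eps\le \tilde K\eps^{-N-m}$, with the spare $\eps^{-N/4}$ absorbing $\sqrt{\tilde K}$. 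The trade-off is clear: the paper's argument is shorter but relies on the comparison principle and needs only $a^+\in L^1$, whereas yours avoids comparison entirely and is purely algebraic, at the cost of establishing the extra uniform $L^1$ estimate on $u_\eps$. Your closing remark about why the $L^2$ bound alone (giving $J_\eps\star u_\eps=O(\eps^{-N/2})$ and an unremovable constant in front of $\eps^{-N/4}$) is insufficient is accurate and shows why the $L^1$ step is the real content of your approach.
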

\begin{proof}
Let $\delta \in (0,\frac{N}{2})$ and consider the function $\zeta_\eps(x):=\frac{1}{\eps^{\frac{N}{2}-\delta}}+a^+(x)$. We will show that $\zeta_\eps$ is a super-solution to \eqref{bcv-eq-asb} when $\eps$ is large enough.
 
 Indeed, we have 
 \begin{align*}
 \opmem{\zeta_\eps}{m}(x)+\zeta_\eps(x)(a(x)-\zeta_\eps(x)) 
 &\le\frac{\|J\|_{\infty}}{\eps^{N+m}}\int_{\R^N}a^+(y)\,dy+\left(\frac{1}{\eps^{\frac{N}{2}-\delta}}+a^+(x)\right)\left[a(x)-\frac{1}{\eps^{\frac{N}{2}-\delta}}-a^+(x)\right]\\
 &\le\frac{\|J\|_{\infty}}{\eps^{N+m}}\int_{\R^N}a^+(y)\,dy-\frac{1}{\eps^{N-2\delta}}.
 \end{align*} 
 where in the last inequality we use  
 $$\left(\frac{1}{\eps^{\frac{N}{2}-\delta}}+a^+(x)\right)\left[a(x)-\frac{1}{\eps^{\frac{N}{2}-\delta}}-a^+(x)\right]\le - \frac{1}{\eps^{N-2\delta}}\quad \text{for all } \quad x\in \R^N.$$
 Thus, for $\eps$ large enough, we get
  \begin{align*}
 \opm{\zeta_\eps}(x)+\zeta_\eps(x)(a(x)-\zeta_\eps(x))\le\frac{\|J\|_{\infty}}{\eps^{N+m}}\int_{\R^N}a^+(y)\,dy-\frac{1}{\eps^{N-2\delta}}<0.
 \end{align*}
  Therefore for $\eps>>1$,  we get $u_\eps\le \zeta_\eps$.  We end the proof by taking $\delta =\frac{N}{4}$.
      
 \end{proof} 
 
\begin{remark}\label{bcv-rem-m0}
When $m=0$ and $(a(x)-1)^+\not \equiv 0$, the above computation holds as well with $\zeta_\eps(x):=\frac{1}{\eps^{\frac{N}{2}-\delta}}+(a(x)-1)^+$. Thus in this case, for large $\eps$, we have
$$u_\eps(x)\le \frac{1}{\eps^{\frac{N}{4}}}+(a(x)-1)^+.$$
\end{remark} 
 
 Next,  we prove the continuity of  $\lambda_p(\lb{R,\eps} +a(x))$ with respect to $\eps$.
\begin{lemma}\label{bcv-lem-esti-conti}
Let $R,\eps$ be fixed and positive  then for all $\eta>0$ there exists $\delta>0$ so that $$|\lambda_p(\lb{R} +a_{\eps}(x))-\lambda_p(\lb{R} +a_{\eps+\delta}(x))|\le \eta,$$
where $a_\eps(x):=a(\eps x)$.
\end{lemma}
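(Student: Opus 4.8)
The plan is to prove continuity of $\eps \mapsto \lambda_p(\lb{R}+a_\eps)$ by exploiting the Lipschitz continuity of $\lambda_p$ with respect to the zero-order coefficient, which is item (iii) of Proposition \ref{bcv-prop-pev}. Indeed, for fixed $R$ the operator $\lb{R}$ does not depend on $\eps$: only the potential $a_\eps(x)=a(\eps x)$ does. Hence by Proposition \ref{bcv-prop-pev}(iii),
\begin{equation*}
|\lambda_p(\lb{R}+a_\eps)-\lambda_p(\lb{R}+a_{\eps+\delta})|\le \|a_\eps-a_{\eps+\delta}\|_{L^\infty(B_R(0))}=\sup_{|x|\le R}|a(\eps x)-a((\eps+\delta)x)|.
\end{equation*}
So everything reduces to controlling the right-hand side.

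\textbf{Controlling the sup.} The key observation is that on $B_R(0)$ the arguments $\eps x$ and $(\eps+\delta)x$ both lie in the fixed compact set $\bar B_{(\eps+1)R}(0)$ once $\delta\le 1$, and $a\in C^1(\R^N)$, hence $a$ is uniformly continuous on that compact set. More quantitatively, $|a(\eps x)-a((\eps+\delta)x)|\le \big(\sup_{\bar B_{(\eps+1)R}(0)}|\nabla a|\big)\,\delta\,|x|\le \big(\sup_{\bar B_{(\eps+1)R}(0)}|\nabla a|\big)\,\delta R$. Therefore, given $\eta>0$, it suffices to choose
\begin{equation*}
\delta\le\min\left\{1,\ \frac{\eta}{R\big(1+\sup_{\bar B_{(\eps+1)R}(0)}|\nabla a|\big)}\right\},
\end{equation*}
and the desired estimate $|\lambda_p(\lb{R}+a_\eps)-\lambda_p(\lb{R}+a_{\eps+\delta})|\le\eta$ follows immediately from the chain above.

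\textbf{Remarks on the obstacle.} There is essentially no obstacle here: the statement is a soft consequence of the Lipschitz dependence of $\lambda_p$ on the potential (already recorded) together with the regularity and local boundedness of $a$ and its gradient. The only mild point to be careful about is that the modulus $\delta$ depends on $\eps$ and $R$ (through $\sup_{\bar B_{(\eps+1)R}(0)}|\nabla a|$), which is exactly what the statement permits since $R$ and $\eps$ are fixed; if one wanted local uniformity in $\eps$ one would note that $\eps\mapsto\sup_{\bar B_{(\eps+1)R}(0)}|\nabla a|$ is monotone nondecreasing and hence locally bounded. If one prefers to avoid differentiability of $a$ altogether, the same argument works using only uniform continuity of $a$ on $\bar B_{(\eps+1)R}(0)$: for $\eta>0$ pick $\rho>0$ with $|a(z)-a(z')|\le\eta$ whenever $z,z'\in\bar B_{(\eps+1)R}(0)$ and $|z-z'|\le\rho$, then take $\delta\le\min\{1,\rho/R\}$ so that $|\eps x-(\eps+\delta)x|=\delta|x|\le\rho$ on $B_R(0)$.
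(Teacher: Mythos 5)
Your proof is correct and follows essentially the same route as the paper: both reduce the statement to the Lipschitz dependence of $\lambda_p$ on the zero-order coefficient from Proposition~\ref{bcv-prop-pev} and then bound $\|a_\eps-a_{\eps+\delta}\|_{L^\infty(B_R)}$ via a Lipschitz (mean-value) estimate on $a$. Two small points in your favour: you correctly invoke item (iii) of Proposition~\ref{bcv-prop-pev} (the paper's proof mis-cites item (ii)), and by bounding $|\nabla a|$ only on the compact set $\bar B_{(\eps+1)R}(0)$ you sidestep the paper's implicit global Lipschitz assumption on $a$, which is cleaner since $a$ is only assumed to be $C^1$.
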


\begin{proof}
Let $\eps>0$ and $R>0$ be fixed. We observe that for all $|\delta|<\eps$ we have for all $x\in \R^N, a_{\eps+\delta}(x)= a_\eps\left(\frac{\eps+\delta}{\eps}x\right)$ therefore 
$$\|a_\eps -a_{\eps+\delta}\|_{\infty,R}=\sup_{B(0,R)}\left\|a_\eps(x) -a_\eps\left(\frac{\eps+\delta}{\eps}x\right)\right\|. $$
Since $a_\eps$ is  Lipschitz continuous in $\R^N$, we have 
$$\left\|a_\eps(x) -a_\eps\left(\frac{\eps+\delta}{\eps}x\right)\right\|\le K(\eps)\eps \delta\|x\|,$$
where $K(\eps)$ is the Lipschitz constant of $a_\eps$.
Thus  
$$\|a_\eps -a_{\eps+\delta}\|_{\infty,R}\le K(\eps)R\eps\delta.$$
Hence, by  Proposition \ref{bcv-prop-pev} (ii) we get 
$$|\lambda_p(\lb{R}+a_\eps(x)) -\lambda_p(\lb{R}+a_{\eps+\delta}(x))|\le K(\eps)R\eps\delta.$$
\end{proof}

Lastly, we require the following identity.
\begin{proposition} Let $\rho \in C_c^{\infty}(\R^N)$ be a radial function, then for all $u\in L^2(\R^N), \varphi \in C^{\infty}_c(\R^N)$  we have  
$$\iint_{\R^N\times \R^N}\rho(z)[u(x+z)-u(x)]\varphi(x)\,dzdx= \frac{1}{2}\iint_{\R^N\times \R^N}\rho(z)u(x)[\varphi(x+z)-2\varphi(x)+\varphi(x-z)]\,dzdx.$$
\end{proposition}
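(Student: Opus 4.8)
The plan is to derive this identity from two elementary changes of variables combined with the radial symmetry of $\rho$, the only delicate point being the justification of Fubini's Theorem since $u$ is merely square-integrable.

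First I would split the left-hand side as $\iint_{\R^N\times\R^N}\rho(z)u(x+z)\varphi(x)\,dz\,dx-\iint_{\R^N\times\R^N}\rho(z)u(x)\varphi(x)\,dz\,dx$ and, in the first double integral, perform for each fixed $z$ the substitution $y=x+z$, then relabel $y$ as $x$. This rewrites the left-hand side as $\iint_{\R^N\times\R^N}\rho(z)u(x)[\varphi(x-z)-\varphi(x)]\,dz\,dx$.

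Next I would use the hypothesis that $\rho$ is radial, so that $\rho(-z)=\rho(z)$: the change of variable $z\mapsto-z$ in the integral just obtained produces the alternative expression $\iint_{\R^N\times\R^N}\rho(z)u(x)[\varphi(x+z)-\varphi(x)]\,dz\,dx$ for the same quantity. Averaging these two representations of the left-hand side then yields $\tfrac12\iint_{\R^N\times\R^N}\rho(z)u(x)[\varphi(x+z)-2\varphi(x)+\varphi(x-z)]\,dz\,dx$, which is exactly the right-hand side.

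The only step requiring genuine (if routine) care, and hence the main obstacle, is to check that these interchanges of the order of integration and these changes of variables are licit. For this I would observe that every integrand involved is supported in $K\times L$ with $K:=\mathrm{supp}(\varphi)$ and $L:=\mathrm{supp}(\rho)$ both compact, and that by Tonelli's Theorem together with the Cauchy--Schwarz inequality $\iint_{K\times L}|\rho(z)u(x+z)\varphi(x)|\,dz\,dx\le\|\rho\|_{\infty}\|\varphi\|_{\infty}\int_{L}\big(\int_{K}|u(x+z)|\,dx\big)\,dz\le\|\rho\|_{\infty}\|\varphi\|_{\infty}\,|L|\,|K|^{1/2}\,\nlto{u}<+\infty$. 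The same bound, up to harmless translations of $u$ and $\varphi$ and using the translation-invariance of Lebesgue measure, controls each of the other integrands appearing, so Fubini's Theorem applies throughout and the computation above is fully justified.
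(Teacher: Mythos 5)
Your proof is correct, and all the changes of variable are justified by the Tonelli/Cauchy--Schwarz estimate you give. The route is essentially the paper's: both proofs rest on the radial symmetry of $\rho$ plus elementary substitutions, the only difference being the order of operations. The paper first symmetrizes in $z$, writing $I=\tfrac12\iint\rho(z)[u(x+z)-u(x)]\varphi(x)+\tfrac12\iint\rho(-z)[u(x-z)-u(x)]\varphi(x)$, then shifts $x$ to arrive at the symmetric bilinear form $-\tfrac12\iint\rho(z)[u(x+z)-u(x)][\varphi(x+z)-\varphi(x)]$ as an intermediate step, and finally unpacks that. You instead shift the convolution onto $\varphi$ at the outset (obtaining $\iint\rho(z)u(x)[\varphi(x-z)-\varphi(x)]$) and only then symmetrize in $z$, skipping the bilinear-form intermediate. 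Your version is slightly more direct, and you also supply the Fubini/Tonelli justification that the paper leaves implicit; otherwise the content is the same.
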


\begin{proof}
Let us denote the left hand side of the equality by $I$, i.e.  
$$I:= \iint_{\R^N\times \R^N}\rho(z)[u(x+z)-u(x)]\varphi(x)\,dzdx.$$
By change of variables and  thanks to the spherical  symmetry of $\rho$,  we get
 \begin{align*}
 I&=\frac{1}{2} \iint_{\R^N\times \R^N}\rho(z)[u(x+z)-u(x)]\varphi(x) +\frac{1}{2} \iint_{\R^N\times \R^N}\rho(-z)[u(x-z)-u(x)]\varphi(x),\\
 &=\frac{1}{2} \iint_{\R^N\times \R^N}\rho(z)[u(x+z)-u(x)]\varphi(x) +\frac{1}{2} \iint_{\R^N\times \R^N}\rho(z)[u(x)-u(x+z)]\varphi(x+z),\\
 &=-\frac{1}{2} \iint_{\R^N\times \R^N}\rho(z)[u(x+z)-u(x)][\varphi(x+z)-\varphi(x)],\\
 &=-\frac{1}{2} \iint_{\R^N\times \R^N}\rho(z)u(x)[\varphi(x)-\varphi(x-z)] +\frac{1}{2} \iint_{\R^N\times \R^N}\rho(z)u(x)[\varphi(x+z)-\varphi(x)],\\
 &=\frac{1}{2} \iint_{\R^N\times \R^N}\rho(z)u(x)[\varphi(x+z)-2\varphi(x)+\varphi(x-z)].
 \end{align*}
\end{proof}

From this Proposition,   we get the following identity for all $u\in L^2(\R^N), \varphi \in C^{\infty}_c(\R^N)$ :
\begin{equation}\label{bcv-eq-asb-id}
\int_{\R^N}\opmem{u}{m}(x)\varphi(x)\,dx= \frac{\eps^{2-m}D_2(J)}{2}\iint_{\R^N\times\R^N}\frac{\rho_\eps(z)}{|z|^2}u_\eps(x)[\varphi(x+z)-2\varphi(x)+\varphi(x-z)]\,dxdz
\end{equation} 
where $\rho_\eps(z)=\frac{1}{\eps^ND_2(J)}J\left(\frac{z}{\eps}\right)\frac{|z|^2}{\eps^2}$.

With these various \textit{apriori} estimates, we can now analyse the asymptotic behaviour of $u_\eps$.

\subsection{The case $m=0$}
In this situation, from Theorem \ref{bcv-thm6} we know that 
\begin{align}
&\lim_{\eps\to 0}\lambda_p(\M_\eps +a(x))= -\sup_{\R^N}a(x)\label{bcv-eq-asl-m0-1}\\
&\lim_{\eps\to +\infty}\lambda_p(\M_\eps +a(x))= 1-\sup_{\R^N}a(x)\label{bcv-eq-asl-m0-2}
\end{align}

As a consequence, for $\eps$ small enough,  $\lambda_p(\M_\eps +a(x))\le -\frac{\sup_{\R^N}a(x)}{2}<0$ and, 
 by Theorem \eqref{bcv-thm1}, there exists a positive solution of \eqref{bcv-eq-asb}.
Moreover the following quantity is well defined 
 $$\eps^*:=\sup\{\eps>0\, |\, \text{ for all }\, \eps'<\eps,\;\text{ there exists a positive solution to } (P_{\eps'}) \}.$$ 
  In view of \eqref{bcv-eq-asl-m0-2}  $\eps^*\in (0,+\infty]$ and $\eps^*<+\infty$ if and only if $(a(x)-1)^+\not\equiv 0$.
\medskip

Let us now determine the limit of $u_\eps$ as $\eps \to 0$ and $\eps \to+\infty$.  We start by proving that  

\begin{equation} \label{bcv-eq-lim0-m0}
\lim_{\eps \to 0}u_\eps(x)= v(x) \quad\text{a.e.}
\end{equation}
where $v$ is a non negative bounded solution of 
\begin{equation}
v(x)(a(x)-v(x))=0 \quad \text{in}\quad \R^N.\label{bcv-eq-eqlim}
\end{equation}
Let $w_\eps:=a(x)-u_\eps$, then from \eqref{bcv-eq-asb}, $w_\eps$ satisfy
\begin{equation}\label{bcv-eq-asb2}
-J_\eps\star w_\eps +w_\eps +u_\eps(x)w_\eps(x)=a(x)-J_\eps\star a(x).
\end{equation}
Multiplying this equation by $w_\eps^+$ and integrating over $\R^N$, it follows that 
$$\iint_{\R^N\times \R^N}J_\eps(x-y)((w_\eps^+)^2(x) -w_\eps(y)w_\eps^+(x))\,dxdy +\int_{\R^N}u_\eps (w_\eps^+)^2=\int_{\R^N}w^+_\eps g_\eps,$$ 
with $g_\eps(x):= a(x)-J_\eps\star a(x)$.

Let us now estimate the above integrals. 
First, we observe that the double integral is positive. Indeed, since $w(y)=w^+(y)-w^-(y)$ we get 

\begin{equation*}
\begin{split}
\iint_{\R^N\times \R^N}J_\eps(x-y)((w_\eps^+)^2(x) -w_\eps(y)w_\eps^+(x))\,dxdy=\iint_{\R^N\times \R^N}J_\eps(x-y)((w_\eps^+)^2(x) -w^+_\eps(y)w_\eps^+(x))\,dxdy \\+\iint_{\R^N\times \R^N}J_\eps(x-y)w^-_\eps(y)w_\eps^+(x)\,dxdy.
\end{split}
\end{equation*}
Thus,
\begin{equation}\label{bcv-eq-asb-m0-epsto0-1}
\begin{split}
\iint_{\R^N\times \R^N}J_\eps(x-y)((w_\eps^+)^2(x) -w_\eps(y)w_\eps^+(x))\,dxdy=\frac{1}{2} \iint_{\R^N\times \R^N}J_\eps(x-y)((w_\eps^+)(x) -w^+_\eps(y))^2\,dxdy \\+\iint_{\R^N\times \R^N}J_\eps(x-y)w^-_\eps(y)w_\eps^+(x)\,dxdy.
\end{split}
\end{equation}
Let us denote $Q:=\textrm{supp}(a^+)$. Since $u_\eps$ is positive and uniformly bounded, we have $\textrm{supp}(w^+)\subset Q$ and  
$$\left| \int_{\R^N}w^+_\eps g_\eps\right| \le C\int_{Q}|g_\eps|.$$
Since $a$ is Lipschitz continuous, a Taylor expansion leads to  $|g_\eps(x)|\le \eps D_2(J)\|\nabla a\|_{\infty}$.
Therefore, 
 \begin{equation}\label{bcv-eq-asb-m0-epsto0-2}
 \left| \int_{\R^N}w^+_\eps g_\eps\right| \le C|Q|\eps.
 \end{equation}
 
Collecting \eqref{bcv-eq-asb-m0-epsto0-1} and \eqref{bcv-eq-asb-m0-epsto0-2}, we get

$$\frac{1}{2} \iint_{\R^N\times \R^N}J_\eps(x-y)((w_\eps^+)(x) -w^+_\eps(y))^2\,dxdy +\iint_{\R^N\times \R^N}J_\eps(x-y)w^-_\eps(y)w_\eps^+(x)\,dxdy +\int_{\R^N}u_\eps (w_\eps^+)^2\le C\eps.$$ 
    
 Thus, $$\int_{\R^N}u_\eps (w_\eps^+)^2\le C\eps $$ and 
 $ u_\eps w_\eps^+ \to 0$ almost everywhere in $Q$. 
 
 Recalling  that $$\int_{\R^N}u_\eps w_\eps=0,$$  from the above estimates we conclude that 
 
 $$\int_{\R^N\setminus Q}u_\eps(a(x)-u_\eps)=\int_{Q}u_\eps(a(x)-u_\eps)\to 0 \quad \text{when }\quad \eps \to 0.$$
 
Since $u_\eps(a(x)-u_\eps)\le 0$ in $\R^N\setminus Q$,  it follows that 
$ u_\eps(x) w_\eps \to 0$ almost everywhere in $\R^N\setminus Q$. Since $u_\eps>0$ and $w_\eps=(a(x)-u_\eps(x))\le 0$, it follows that 
$u_\eps\to 0$ almost everywhere in $\R^N\setminus Q$. 
 Hence, $u_\eps$ converges pointwise almost everywhere  to a bounded non-neqative solution of \eqref{bcv-eq-eqlim}.

\begin{remark}\label{bcv-rem-proof-conv}
Note that the above proof can easily be adapted to $\m_{\eps,m}$ for  $m<2$  as soon as the function $a$ is smooth enough. Indeed, for $a\in C^2(\R^N)$, following the above arguments, we get by using the Taylor expansion up to order $2$ of $a$
$$\int_{\R^N}u_\eps (w_\eps^+)^2(x)\,dx\le C\eps^{2-m}, $$
with a constant $C$ only depending on $\|\nabla^2u\|_{\infty}$.
When $a$ is only Lipschitz, this argument is  valid for $\m_{\eps,m}$ only when  $m<1$. 
\end{remark}

Finally, to complete our analysis,  we need to check that  
\begin{equation}
\lim_{\eps\to\eps^*}u_\eps =(a(x)-1)^+. \label{bcv-eq-liminfi-m0}
\end{equation}

We treat separately the following two cases : $(i)\; \eps^*<+\infty,\; (ii)\; \eps^*=\infty.$ The latter case arises when $\sup_{\R^N}(a(x)-1)>0$. In this situation, there exists $R_0>0$ such that  the continuous function  $\varphi=\left(a(x)-1\right)^+\not\equiv 0$ in $B_{R}(0)$ for $R\ge R_0$ and we can check that $\varphi$ is a sub-solution for the approximated problem: 
 \begin{equation}
\int_{B_R(0)}J_\eps(x-y) u(y)\,dy -u(x) +u(x)(a(x)-u)=0\quad \text{ in  }\quad B_R(0). \label{bcv-eq-asb-approx}
 \end{equation}
 Since large constants are super-solutions of \eqref{bcv-eq-asb-approx} for any $\eps\ge 0, R>R_0$, there exists a unique solution $u_{\eps, R}$ with $\varphi\le u_{\eps,R}\le M$. By sending $R \to \infty$ and by the uniqueness of the solution of \eqref{bcv-eq-asb} we have 
 $\varphi \le u_{\eps} \le M$ in $\R^N$.

 \subsubsection*{Case $\eps^*=+\infty$:}
 
 Owing to Lemma \ref{bcv-lem-esti1} and by Remark \ref{bcv-rem-m0}, for all $x \in \R^N$ for large $\eps$ we get 
$$(a(x)-1)^+\le u_\eps(x)\le (a(x)-1)^++\frac{1}{\eps^{\frac{N}{4}}}.$$ 
 Hence, $u_\eps$ converge uniformly to $(a(x)-1)^+$. 

  \subsubsection*{Case $\eps^*<+\infty$:}

  In this situation, the function $(a(x)-1)^+\equiv 0$ in $\R^N$ and the problem is  reduced to prove that 
$$\lim_{\eps\to\eps^*}u_\eps(x)=0 \quad \text{ for all }\quad x\in \R^N.$$
Note that by definition of $\eps^*$ we must have 
$\lambda_p(\M_{\eps^*} +a(x))\ge 0$. 
Indeed, if not, then $\lambda_p(\M_{\eps^*} +a(x))< 0$ and by Lemma \ref{bcv-lem-scal-eq},  $\lambda_p(\M +a_{\eps^*}(x))< 0$. This implies that  for some $R$,   $\lambda_p(\lb{R} +a_{\eps^*}(x))< 0$.
By continuity of $\lambda_p(\lb{R} +a_{\eps^*}(x))$ with respect to $\eps$, (Lemma \ref{bcv-lem-esti-conti}) we get for some $\delta_0>0$, 
$\lambda_p(\lb{R} +a_{\eps^*+\delta}(x))< 0$ for any $\delta \le \delta_0$. Hence, $\lambda_p(\m_{\eps^*+\delta} +a(x))=\lambda_p(\m +a_{\eps^*+\delta}(x))< 0 $ for any $\delta\le \delta_0$ and  by Theorem \ref{bcv-thm1} there exists a positive solution of \eqref{bcv-eq-asb} for all $\eps\le \eps^*+\delta_0$ thus contradicting the definition of $\eps^*$. 

Note also that since $\eps^*<+\infty$, the construction of the supersolution  in Section \ref{bcv-section-crit} holds for any $\eps \in [\frac{\eps^*}{2},\eps^*]$, thus $u_\eps$ is uniformly bounded in $L^1(\R^N)$.   

 Let $g(x,s):=s(a(x)-1 -s)$ then for all $\eps$ we have 
$$J_\eps\star u_\eps=-g(x,u_\eps(x)) \quad \text{ in }\quad \R^N.$$
Since $J$ is $C^1$ and $\eps^*>0,$ for $\eps \in [\frac{1}{2}\eps^*,\eps^*)$, we have 
\begin{align*} 
|g(x,u_\eps(x)) -g(z,u_\eps(z))|&= \left|\int_{\R^N} [J_\eps(x-y)-J(z-y)]u_\eps(y)\,dy\right|,\\
&\le |x-z| \int_{\R^N} \frac{|J_\eps(x-y)-J(z-y)|}{|x-z|}u_\eps(y)\,dy,\\
&\le C(\eps^*)|x-z|.
  \end{align*}
This leads to : 
\begin{align*}
C(\eps^*)|x-z|&\ge |[1-a(x)+u_\eps(x)+u_\eps(z)][u_\eps(x)-u_\eps(z)] + [a(z)-a(x)]u_\eps(x)|,\\     
&\ge |[1-a(x)+u_\eps(x)+u_\eps(z)]||u_\eps(x)-u_\eps(z)| -  |x-z|\frac{|a(z)-a(x)|}{|x-z|}M.
\end{align*}
From the last inequality, it follows that $u_\eps$ is uniformly Lipschitz in $Q:=\{y\in\R^N\,|\, a(y)<1\}$  $u_\eps$  with a Lipschitz constant independent of $\eps$. Thus, $(u_\eps)_{\eps \in [\frac{1}{2}\eps^*,\eps^*)}$ is uniformly bounded in $C^{0,\frac{1}{2}}_{loc}(Q)$. If $Q^c=\emptyset$, then 
 $(u_\eps)_{\eps \in [\frac{1}{2}\eps^*,\eps^*)}$ is uniformly bounded in $C^{0,\frac{1}{2}}_{loc}(\R^N)$. Otherwise, $Q^c \neq \emptyset$  and on $Q^c$ we have $a(x)\equiv 1$. Therefore, on $Q^c$,  $u^2_\eps(x)=J_\eps\star u_\eps$ and the $C^{0,\frac{1}{2}}(\stackrel{\circ}{Q^c})$ norm of $u_\eps$ is bounded independently of $\eps$.   
Hence, 
\begin{equation}\label{bcv-eq-m=0-eps*}
(u_\eps)_{\eps \in [\frac{1}{2}\eps^*,\eps^*)} \quad \text{ is uniformly bounded in } \quad C^{0,\frac{1}{2}}_{loc}(Q)\cap C^{0,\frac{1}{2}}(\stackrel{\circ}{Q^c}).
  \end{equation}
In both case, since $a(x)<0$ for $|x|>>1$, $Q^c$ is a compact set and $|\bar Q \cap Q^c |=0$. From \eqref{bcv-eq-m=0-eps*},  for all sequence $\eps_n\to \eps^*$  by a diagonal extraction procedure  there exists a subsequence still denoted $(u_{\eps_n})_{n\in\N}$ that converges locally uniformly  in $\R^N\setminus( \bar Q \cap Q^c)$ to some non-negative function $v$.  By passing to the limit in \eqref{bcv-eq-asb}  we can see that $v$ is a bounded non negative solution of 
 $$J_{\eps^*}\star v(x) -v(x) +v(x)(a(x)-v(x))=0\quad \text{  in  }\quad \R^N \setminus(\bar Q \cap Q^c).$$
Since $\bar Q \cap Q^c$ is of zero measure $v$ is a solution to
$$ \oplb{v}{\eps^*,\R^N\setminus (\bar Q \cap Q^c)}+v(x)(a(x)-v(x))=0\quad \text{  in  }\quad \R^N \setminus(\bar Q \cap Q^c). $$ 
Since $0\le \lambda_p(\M_{\eps^*} +a(x))\le \lambda_p(\lb{\eps^*, \R^N\setminus (\bar Q \cap Q^c)} +a(x)) $,  we deduce that  $v \equiv 0$  which concludes the proof of the limit.

\begin{remark}
When $a(x)$ is a radially symmetric non-increasing function  we remark that  $\eps^*$ is  a sharp threshold. That is  for all $\eps\ge \eps^*$ then  \eqref{bcv-eq-asb} does not have any positive solutions. Indeed in this situation, the function $a_\eps(x)$ is monotone non increasing with respect to $\eps$. Thus, by (i) of Proposition \ref{bcv-prop-pev}, for all $\eps \ge \eps^*$ we have 
 $$0=\lambda_p(\M+a_{\eps^*}(x))\le\lambda_p(\M+a_{\eps}(x)).$$    
Hence, by Theorem \ref{bcv-thm1}, $0$ is the unique non negative solution to \eqref{bcv-eq-asb} for $\eps\ge \eps^*$. 
\end{remark}

\subsection{The case $0<m<2$}
In this situation, from Theorem \ref{bcv-thm6} we know that 
\begin{align}
&\lim_{\eps\to 0}\lambda_p(\M_{\eps,m} +a(x))= -\sup_{\R^N}a(x)\label{bcv-eq-asl-0m2-1}\\
&\lim_{\eps\to +\infty}\lambda_p(\M_\eps +a(x))= -\sup_{\R^N}a(x)\label{bcv-eq-asl-0m2-2}
\end{align}

As a consequence, for $\eps$ small enough and for large $\eps$  we have  $\lambda_p(\M_\eps +a(x))\le -\frac{\sup_{\R^N}a(x)}{2}<0$. Therefore, by Theorem \eqref{bcv-thm1} there exists a solution of \eqref{bcv-eq-asb} for both  small and large $\eps$.

The limit of $u_\eps$ when  $\eps\to \infty$ is an obvious consequence of $(iv)$ of Lemma \ref{bcv-lem-esti1} and Lemma \ref{bcv-lem-esti2} since  for $\eps$ large enough 
$$(a(x)-\frac{1}{\eps^m})^+\le u_\eps\le a^+(x)+\frac{1}{\eps^{\frac{N}{4}}}. $$

To obtain the limits in $L^2$, we just observe that since by Lemma $u_\eps$ is uniformly bounded in $L^2$ and converges pointwise to $a^+$, we get $u_\eps \rightharpoonup a^+$ in $L^2$. Moreover by Fatou's Lemma, we infer that
$$\int_{\R^N}(a^+)^2(x)\,dx \le \liminf_{\eps\to \infty}\int_{\R^N}u_\eps^2(x)\,dx.$$  
On the other hand, by integrating \eqref{bcv-eq-asb} over $\R^N$ we get for all $\eps$
$$\int_{\R^N}u^2_\eps(x)\,dx =\int_{\R^N}a(x)u_\eps(x)\,dx\le \int_{\R^N}a^+(x)u_\eps(x)\,dx.$$
By the Cauchy-Schwartz inequality, for all $\eps$
 $$\left(\int_{\R^N}u^2_\eps(x)\,dx\right)^{1/2} \le \left(\int_{\R^N}(a^+)^2(x)\,dx\right)^{1/2}$$
 and we get $$\int_{\R^N}(a^+)^2(x)\,dx \le \liminf_{\eps\to \infty}\int_{\R^N}u_\eps^2(x)\,dx\le \limsup_{\eps \to +\infty}\int_{\R^N}u^2_\eps(x)\,dx \le \int_{\R^N}(a^+)^2(x)\,dx.$$
 Hence, $\nlto{u_\eps} \to \nlto{a^+}$ and by the parallelogram identity $u_\eps \to a^+$ in $L^2(\R^N)$ since $u_\eps$ converges weakly to $a^+$ in $L^2$.

 As already mentioned  in Remark \ref{bcv-rem-proof-conv}, it can be seen that $u_\eps$ has a limit when $\eps \to 0$  as soon as  $a$ is smooth enough. Thus,
$$\lim_{\eps \to 0}u_\eps =v(x)$$
   where $v$ is a non-negative bounded solution of \eqref{bcv-eq-eqlim}.

\subsection{The case $m=2$}
In this situation, from Theorem \ref{bcv-thm6} we have  
\begin{align}
&\lim_{\eps\to 0}\lambda_p(\M_{\eps,m} +a(x))= \lambda_1\left(\frac{D_2(J)}{2N}\Delta +a(x)\right)\label{bcv-eq-asl-m2-1}\\
&\lim_{\eps\to +\infty}\lambda_p(\M_\eps +a(x))= -\sup_{\R^N}a(x)\label{bcv-eq-asl-m2-2}
\end{align}

As a consequence, for large $\eps$  we have  $\lambda_p(\M_\eps +a(x))\le -\frac{\sup_{\R^N}a(x)}{2}<0$ and  by Theorem \eqref{bcv-thm1} there exists a solution of \eqref{bcv-eq-asb} for  large $\eps$. For $\eps$ small, the existence of a positive solution is conditioned by the sign of $\lambda_1\left(\frac{D_2(J)}{2N}\Delta +a(x)\right)$. When  $\lambda_1\left(\frac{D_2(J)}{2N}\Delta +a(x)\right)>0$, for $\eps$ small there exists no positive solution of \eqref{bcv-eq-asb}.
The limit of $u_\eps$ when $\eps \to +\infty$ is obtained in the same way as in the case $2>m>0$. Hence, we only focus here on the limit when $\eps\to 0$.  

Assume for the moment that $\lambda_1\left(\frac{D_2(J)}{2N}\Delta +a(x)\right)<0$. We now show that $u_\eps \to v$ where $v$ is the positive solution of 
$$\frac{D_2(J)}{2N}\Delta v +v(a(x)-v)=0 \quad \text{ in } \quad \R^N.$$

Let $(\eps_n)_{n\in \N}$ be a sequence of positive reals converging to $0$. We write $u_n$ instead of $u_{\eps_n}$.
 By Lemma \ref{bcv-lem-esti1}, $\nlto{u_n}$ is bounded uniformly and a simple algebraic computation yields : 
$$\iint_{\R^N\times\R^N}\rho_{n}(z)\frac{(u_n(x+z)-u_n(x))^2}{|z|^2}\,dxdz<C, $$ 
with $C$ independent of $n$. Therefore, for any $R>0$, we see that 

$$\iint_{B_R\times B_R}\rho_{\eps}(z)\frac{(u_n(x+z)-u_n(x))^2}{|z|^2}\,dxdz<C. $$ 
For $R>0$ fixed, since $\nlto{u_n}$ is uniformly bounded in $L^2$, there exists a subsequence $u_n \rightharpoonup v$ in  $L^{2}(B_R)$  and
from the characterisation of Sobolev Space \cite{Ponce2004, Ponce2004a}, we have $u_n\to v$ in $L^2(B_R)$.
 
By a standard diagonal extraction argument, from the sequence $(u_n)_{n\in \N}$ we can then extract a subsequence still denoted $(u_n)_{n\in \N}$ which converges to some $v$ in $L^{2}_{loc}(\R^N)$.  
Moreover, by Lemma \ref{bcv-lem-esti1}, $u_n$ is uniformly bounded and  there exists $\delta(\lambda_1)>0$ independent of $\eps$  such that $ \max_{supp(a^+)}(u_n)>\delta$.

Multiplying \eqref{bcv-eq-asb} by $\varphi \in C^{\infty}_c(\R^N)$ and integrating  yields :  

$$\frac{D_2(J)}{2}\iint_{\R^N\times\R^N}\frac{\rho_n(z)}{|z|^2}u_n(x)[\varphi(x+z)-2\varphi(x)+\varphi(x-z)]\,dxdz+\int_{\R^N}\varphi(x)u_n(x)(a(x)-u_n(x))\,dx=0, $$ 
 where we use \eqref{bcv-eq-asb-id} to compute $\int_{\R^N}\opmem{u_n}{2}(x)\varphi(x)\,dx$.
This leads us to

\begin{multline*}
\frac{D_2(J)}{2}\iint_{\R^N\times\R^N}\frac{\rho_n(z)}{|z|^2}u_n(x)\transposee{z}\nabla^{2}\varphi(x)z\,dxdz+\int_{\R^N}\varphi(x)u_n(x)(a(x)-u_n(x))\,dx
\\= -\frac{D_2(J)}{2}\iint_{\R^N\times\R^N}\frac{\rho_n(z)}{|z|^2}u_n(x)[\varphi(x+z)-2\varphi(x)+\varphi(x-z)-\transposee{z}\nabla^{2}\varphi(x)z]\,dxdz,  
\end{multline*}
where $\nabla^{2}\varphi(x):=(\partial_{ij}\varphi(x))_{i,j}$.
Since $\rho_n(z)$ is radially symmetric, we can see that 
$$ \frac{D_2(J)}{2}\iint_{\R^N\times\R^N}\frac{\rho_n(z)}{|z|^2}u_n(x)\transposee{z}\nabla^{2}\varphi(x)z\,dxdz=\frac{D_2(J)K_{2,N}}{2}\int_{\R^N}u_n(x)\Delta\varphi(x)\,dx$$
with $$K_{2,N}:=\fint_{S^{N-1}}(\sigma\cdot e_1)^2 d\sigma=\frac{1}{N}.$$ 
 Thus,  we get
 
\begin{multline}\label{bcv-eq-lim-m2-eps-0-1}
\frac{D_2(J)}{2N}\int_{\R^N}u_n(x)\Delta\varphi(x)\,dx+\int_{\R^N}\varphi(x)u_n(x)(a(x)-u_n(x))\,dx\\=-\frac{D_2(J)}{2}\iint_{\R^N}\frac{\rho_n(z)}{|z|^2}u_n(x)[\varphi(x+z)-2\varphi(x)+\varphi(x-z)-\transposee{z}\nabla^{2}\varphi(x)z]\,dxdz,  
\end{multline}
Note that since $u_n$ converges to $v$ in $L^2_{loc}(\R^N)$ we have 

 \begin{equation}\label{bcv-eq-lim-m2-eps-0-2}
\int_{\R^N}\varphi(x)u_n(x)(a(x)-u_n(x))\,dx \to  \int_{\R^N}\varphi(x)v(x)(a(x)-v(x))\,dx
\end{equation}

Recall  that $\varphi \in C^{\infty}_c(\R^N)$, so there exists $C(\varphi)$ and $R(\varphi)$ so that
$$|\varphi(x+z)-2\varphi(x)+\varphi(x-z)- \transposee{z}\nabla^{2}\varphi(x)z|<C(\varphi)|z|^{3}\chi_{B_{R(\varphi)}}(x). $$ 

Because $u_n$ is bounded uniformly we obtain
\begin{equation}\label{bcv-eq-lim-m2-eps-0-3}
 \frac{D_2(J)}{2}\iint_{\R^N}\frac{\rho_\eps(z)}{|z|^2}u_n(x)[\varphi(x+z)-2\varphi(x)+\varphi(x-z)-\transposee{z}\nabla^{2}\varphi(x)z]\,dxdz\le CC(\varphi)\int_{\R^N}\rho_n(z)|z| \to 0.
 \end{equation}

Passing to the limit $\eps \to 0$ in \eqref{bcv-eq-lim-m2-eps-0-1}, using  \eqref{bcv-eq-lim-m2-eps-0-2} and \eqref{bcv-eq-lim-m2-eps-0-3}, we get

\begin{equation}\label{bcv-eq-lim-m2-eps-0-4}
\frac{D_2(J)}{2N}\int_{\R^N}v(x)\Delta\varphi(x)\,dx+\int_{\R^N}\varphi(x)v(x)(a(x)-v(x))\,dx=0.
\end{equation}

\eqref{bcv-eq-lim-m2-eps-0-4} being true for any $\varphi \in C^{\infty}_c$ this implies that $v$ satisfies 

$$\frac{D_2(J)}{2N}\Delta v +v(a(x)-v)=0 \quad \text{ a.e. in } \quad \R^N.$$

Since $v$ is bounded, by elliptic regularity $v$ is smooth. 
To conclude we need to prove that $v$ is non zero. To this end,  we claim that
\begin{lemma}
There exists $R_0,\tau$ and $\eps_0$ positive constants so that for all $\eps\le \eps_0$
we have $u_\eps\ge \tau$ almost everywhere in $B_{R_0}(0)$.   
\end{lemma}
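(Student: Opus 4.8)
The plan is to build, for every small $\eps$, a fixed-sign subsolution of \eqref{bcv-eq-asb} that is uniformly bounded below on a fixed ball, and then to pin $u_\eps$ from below by it via the sweeping principle. Since we are in the regime $\lambda_1\left(\frac{D_2(J)}{2N}\Delta +a\right)<0$, and since $R\mapsto\lambda_1\left(\frac{D_2(J)}{2N}\Delta +a,B_R\right)$ is non-increasing with limit $\lambda_1\left(\frac{D_2(J)}{2N}\Delta +a\right)$ as $R\to\infty$, I first fix $R>0$ with $\lambda_1\left(\frac{D_2(J)}{2N}\Delta +a,B_R\right)<0$. By Theorem \ref{bcv-thm7}, $\lambda_p(\lb{R,\eps,2}+a)\to\lambda_1\left(\frac{D_2(J)}{2N}\Delta +a,B_R\right)$ as $\eps\to0$, so there are $\mu>0$ and $\eps_0>0$ such that $\lambda_p(\lb{R,\eps,2}+a)\le-2\mu$ for all $\eps\le\eps_0$. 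For such $\eps$, adapting Proposition \ref{bcv-prop-phip} to the rescaled kernel (the zeroth order coefficient of $\lb{R,\eps,2}+a$ is $a-\eps^{-2}$, whose supremum over $B_R$ tends to $-\infty$, so the existence criterion is satisfied), there is a positive continuous principal eigenfunction $\varphi_\eps\in C(\bar B_R)$ with $\lb{R,\eps,2}[\varphi_\eps]+a\varphi_\eps+\lambda_p(\lb{R,\eps,2}+a)\varphi_\eps=0$ in $B_R$; normalise $\|\varphi_\eps\|_\infty=1$. Extending $\varphi_\eps$ by $0$ to $\R^N$ and setting $\psi_\eps:=2\mu\varphi_\eps$, one checks $\psi_\eps$ is a bounded, nonnegative, compactly supported subsolution of \eqref{bcv-eq-asb}: on $B_R$ the convolution only sees the support, so $\m_{\eps,2}[\psi_\eps]=2\mu\,\lb{R,\eps,2}[\varphi_\eps]$, whence $\m_{\eps,2}[\psi_\eps]+a\psi_\eps=-2\mu\lambda_p(\lb{R,\eps,2}+a)\varphi_\eps\ge 2\mu\psi_\eps$ and therefore $\m_{\eps,2}[\psi_\eps]+\psi_\eps(a-\psi_\eps)\ge\psi_\eps(2\mu-\psi_\eps)\ge0$ since $\psi_\eps\le2\mu$; off $B_R$, $\psi_\eps=0$ and $\m_{\eps,2}[\psi_\eps]=\eps^{-2}\int_{B_R}J_\eps(\cdot-y)\varphi_\eps(y)\,dy\ge0$. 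As \eqref{bcv-eq-asb} has the unique positive bounded solution $u_\eps$ and large constants are supersolutions, the sweeping argument used throughout yields $u_\eps\ge\psi_\eps=2\mu\varphi_\eps$ on $\R^N$.

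It then remains to bound $\varphi_\eps$ from below, uniformly in $\eps$, on a fixed ball strictly inside $B_R$. The plan is to prove that $\varphi_\eps\to\phi_1$ locally uniformly in $B_R$ as $\eps\to0$, where $\phi_1>0$ is the principal Dirichlet eigenfunction of $\frac{D_2(J)}{2N}\Delta+a$ on $B_R$. Testing the eigenvalue equation against $\varphi_\eps$ and using $\lambda_p(\lb{R,\eps,2}+a)=\lambda_v(\lb{R,\eps,2}+a)$ (Theorem \ref{bcv-thm7}), exactly as in the proof of Lemma \ref{bcv-lem-esti1}(ii), gives a uniform bound $\frac1{\eps^2}\iint_{B_R\times B_R}J_\eps(x-y)(\varphi_\eps(x)-\varphi_\eps(y))^2\,dx\,dy\le C$ on the Gagliardo-type energy; by the characterisation of $H^1$ of Bourgain--Brezis--Mironescu and Ponce's compactness result \cite{Bourgain2001,Brezis2002,Ponce2004,Ponce2004a}, $\varphi_\eps\to\phi_1$ strongly in $L^2(B_R)$ along a subsequence, with $\phi_1\in H^1_0(B_R)$ (the Dirichlet condition coming from the mass leaking out of $B_R$, as in the proof of Theorem \ref{bcv-thm7}). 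Passing to the limit in the weak form of the eigenvalue equation with the expansion \eqref{bcv-eq-asb-id}, as in the computation leading to \eqref{bcv-eq-lim-m2-eps-0-4}, shows $\phi_1$ solves $\frac{D_2(J)}{2N}\Delta\phi_1+a\phi_1=-\lambda_1\left(\frac{D_2(J)}{2N}\Delta+a,B_R\right)\phi_1$ in $B_R$ with $\phi_1\ge0$, $\phi_1\not\equiv0$; by simplicity of the principal Dirichlet eigenvalue it is the principal eigenfunction, so $\phi_1>0$ in $B_R$ by the strong maximum principle and $\phi_1$ is smooth by elliptic regularity. The local uniform convergence is then obtained from equi-regularity of the $\varphi_\eps$ supplied by the Harnack inequality (Theorem 1.4 in \cite{Coville2012}): after the rescaling $\xi=x/\eps$ the eigenvalue equation reads $\int J(\xi-\eta)\varphi_\eps(\eps\eta)\,d\eta=(1-\eps^2(a(\eps\xi)+\lambda_p(\lb{R,\eps,2}+a)))\varphi_\eps(\eps\xi)$, an $O(\eps^2)$ perturbation of the potential $1$, so the Harnack constant over a chain of $\lesssim R_0/\eps$ overlapping kernel-size balls remains bounded as $\eps\to0$.

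With this in hand the conclusion is immediate: fix $R_0\in(0,R)$; since $\phi_1>0$ on $B_R$ and is continuous, $\phi_1\ge2\tau_1>0$ on $\bar B_{R_0}$, hence $\varphi_\eps\ge\tau_1$ on $B_{R_0}$ for $\eps$ below a (possibly smaller) threshold, and combined with $u_\eps\ge2\mu\varphi_\eps$ this gives $u_\eps\ge2\mu\tau_1=:\tau$ almost everywhere in $B_{R_0}(0)$, which is the claim. The main obstacle is the equi-regularity/uniform-convergence step for the eigenfunctions: the delicate points are fixing the normalisation of $\varphi_\eps$ so that its $L^2$-limit is a non-zero multiple of $\phi_1$ (i.e. excluding concentration of $\varphi_\eps$ on a set of vanishing measure, which the energy bound does not rule out in high dimension on its own) and establishing the rescaled Harnack estimate just described. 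If one only needs the non-triviality of the $L^2_{loc}$-limit $v$ of $u_\eps$ (which is all the subsequent argument uses the Lemma for), a softer route gives the weaker statement $\int_{B_{R_0}}u_\eps\ge c_0>0$: the nonlocal ground-state substitution --- testing \eqref{bcv-eq-asb} against $\phi^2/u_\eps$ for a fixed $\phi\in C_c^\infty(B_{R_0})$, using $\iint J_\eps(x-y)\big(\tfrac{u_\eps(y)}{u_\eps(x)}\phi^2(x)+\tfrac{u_\eps(x)}{u_\eps(y)}\phi^2(y)\big)\ge2\iint J_\eps(x-y)\phi(x)\phi(y)$ and $\tfrac1{2\eps^2}\iint J_\eps(x-y)(\phi(x)-\phi(y))^2\to\tfrac{D_2(J)}{2N}\int|\nabla\phi|^2$ --- yields $\liminf_{\eps\to0}\int_{B_{R_0}}u_\eps\phi^2\ge\int a\phi^2-\tfrac{D_2(J)}{2N}\int|\nabla\phi|^2>0$ upon choosing $\phi$ an approximate minimiser of the Rayleigh quotient of $\frac{D_2(J)}{2N}\Delta+a$.
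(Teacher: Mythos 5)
Your proposal follows essentially the same route as the paper's proof: take the principal eigenfunction $\varphi_\eps$ of the rescaled bounded-domain operator on a large ball $B_R$, use a small multiple of it as a subsolution of \eqref{bcv-eq-asb} (sweeping) to get $u_\eps\ge c\,\varphi_\eps$, then show via the energy bound and the Bourgain--Brezis--Mironescu/Ponce compactness that $\varphi_\eps$ converges to the positive Dirichlet principal eigenfunction of $\frac{D_2(J)}{2N}\Delta+a$ on $B_R$, and conclude.

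Two points where you diverge are worth flagging. (1) To guarantee a \emph{continuous} eigenfunction the paper regularises $a$ to $a_\sigma$ (and then performs a cascade of $\sigma$-, $R$-, $\eps$-perturbations); you instead note that the zeroth-order coefficient of $\lb{R,\eps,2}+a$ is $a-\eps^{-2}$, so the existence criterion of Proposition \ref{bcv-prop-phip} is automatically satisfied for small $\eps$. This is a legitimate shortcut and arguably cleaner. (2) You normalise $\|\varphi_\eps\|_\infty=1$, whereas the paper normalises $\|\varphi_{p,\eps}\|_{L^2(B_{R_0})}=1$; the $L^2$ normalisation is not cosmetic --- it is what makes the BBM/Ponce argument deliver a \emph{non-zero} strong $L^2$ limit. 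With the sup-normalisation you correctly worry about concentration; this is a genuine small gap in your write-up, and the fix is simply to adopt the paper's $L^2$ normalisation rather than to patch it with a separate argument. Your closing remarks are fair: turning $u_\eps\ge c\,\varphi_\eps$ together with $\varphi_\eps\to\varphi_1$ \emph{a.e.} into a pointwise bound $u_\eps\ge\tau$ uniformly in $\eps$ requires some extra equi-regularity (the paper is terse about this, citing an adaptation of Lemma \ref{bcv-lem-esti1}(iii)); your rescaled Harnack-chaining is a reasonable way to supply it, and the alternative ground-state substitution you sketch is a neat soft argument, though it only delivers $\int_{B_{R_0}}u_\eps\ge c_0$, a strictly weaker conclusion than the stated Lemma.
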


From the above claim, we deduce that $v\ge \tau>0 \; a.e.$ and therefore $v\equiv u$, the  unique smooth non-trivial solution of 
$$\frac{D_2(J)}{2N}\Delta u +u(a(x)-u)=0 \quad \text{ in } \quad \R^N.$$
The sequence $(\eps_n)_{n}$ being arbitrary, it follows that $u_\eps \to u$ in $L^2_{loc}( \R^N)$. 

Similarly, if we assume now that $\lambda_1(\frac{D_2(J)}{2N}\Delta  +a(x))=0$ and there exists a sequence $(\eps_n)_{n\in \N}, \eps_n \to 0$ of non trivial solution of \eqref{bcv-eq-asb}. The above argument is valid and we get 
$u_n\to v$ in $L^2_{loc}(\R^N)$ with $v$ a smooth solution to 
$$\frac{D_2(J)}{2N}\Delta v +v(a(x)-v)=0 \quad \text{ in } \quad \R^N.$$
Since $\lambda_1(\frac{D_2(J)}{2N}\Delta  +a(x))=0$, $v\equiv 0$ is the only solution and we get 
 $u_n\to 0$ in $L^2_{loc}(\R^N)$.

Let us complete our proof and establish the Lemma.
\begin{proof}
Let us denote, $\l$ the operator $$ \l[\varphi]:=\frac{1}{\eps^2}\left[\int_{B_R(0)}J_\eps(x-y)\varphi(y)\,dy -\varphi(x)\right].$$
Since  $\sup_{\R^N} a(x)$ is achieved on $\R^N$  we  regularise $a$ by $a_\sigma$ independently of $\eps$, so that for all $\eps$ and $R\ge R_1$ the principal eigenvalue $\lambda_p(\lb +a_\sigma(x))$ is associated with a continuous principal eigenfunction $\varphi_{p,\eps}$ and 
$$|\lambda_p(\lb{R,\eps,2}+a_\sigma(x))-\lambda_p(\lb{R,\eps,2}+a(x))|\le \|a_\sigma(x)-a(x)\|_{\infty}\le \kappa\sigma,$$
 where $\kappa$ is the Lipschitz constant of $a$.

By the Lipschitz continuity of $\lambda_1\left(\frac{D_2(J)}{2N}\Delta + a(x) \right)$ with respect to $a$, we can  choose $\sigma$ small enough so that $$\lambda_1\left(\frac{D_2(J)}{2N}\Delta + a_\sigma(x) \right)\le \frac{1}{2} \lambda_1\left(\frac{D_2(J)}{2N}\Delta + a(x) \right)<0.$$

Recall that 
$$ \lim_{R\to \infty}\lambda_1\left(\frac{D_2(J)}{2N}\Delta + a_\sigma(x), B_R \right)= \lambda_1\left(\frac{D_2(J)}{2N}\Delta + a_\sigma(x) \right),$$
So we can choose  $R_0$ large  so that $$ \lambda_1\left(\frac{D_2(J)}{2N}\Delta + a_\sigma(x), B_{R_0} \right)\le \frac{1}{4} \lambda_1\left(\frac{D_2(J)}{2N}\Delta + a(x) \right).$$ 

Thanks to Theorem \ref{bcv-thm7}, we have $\lim_{\eps \to 0}\lambda_p(\lb{R,\eps,2}+a_\sigma(x))=\lambda_1\left(\frac{D_2(J)}{2N}\Delta + a_\sigma(x), B_R \right)$ so for $\eps$ small,say $\eps \le \eps_0$ by choosing $\sigma$ smaller if necessary, we achieve
$$\lambda_p(\lb{R,\eps,2}+a_\sigma(x))\le  \frac{1}{8} \lambda_1\left(\frac{D_2(J)}{2N}\Delta + a(x) \right) \quad \text{ for all }\quad \eps \le \eps_0.$$ 

Let $\varphi_{p,\eps}$ be the principal eigenfunction associated with $\lb{R_0,\eps,2}+a_\sigma(x)$, then we have 
\begin{equation*}
\lb{R_0,\eps,2}[\varphi_{p,\eps}](x)+a(x)\varphi_{p,\eps}(x)\ge \left[-\frac{1}{8} \lambda_1\left(\frac{D_2(J)}{2N}\Delta + a(x) \right) -\kappa\sigma \right]\varphi_{p,\eps}(x)\quad \text{ for all }\quad \eps \le \eps_0.
 \end{equation*}  
 By choosing $\sigma$ smaller if necessary, $$\left[-\frac{1}{8} \lambda_1\left(\frac{D_2(J)}{2N}\Delta + a(x) \right) -\kappa\sigma \right]\ge -\frac{1}{16} \lambda_1\left(\frac{D_2(J)}{2N}\Delta + a(x) \right) $$
 and we achieve
 
 \begin{equation}\label{bcv-eq-clai-no-triv1}
\lb{R_0,\eps,2}[\varphi_{p,\eps}](x)+a(x)\varphi_{p,\eps}(x)\ge -\frac{1}{16} \lambda_1\left(\frac{D_2(J)}{2N}\Delta + a(x) \right)\varphi_{p,\eps}(x)\quad \text{ for all }\quad \eps \le \eps_0.
 \end{equation}

To conclude our proof, it is then  enough to show that  for some well chosen normalisation of $\varphi_{p,\eps}$ we have
\begin{equation}
\varphi_{p,\eps}(x)\to \varphi_1(x), \quad \text{ a.e.  in }\quad B_{R_0} \label{bcv-eq-clai-no-triv2}
\end{equation}
$\varphi_1$ is a positive principal eigenfunction associated with  $\lambda_1\left(\frac{D_2(J)}{2N}\Delta + a_\sigma(x), B_{R_0} \right)$. 
Indeed, assume for the moment that \eqref{bcv-eq-clai-no-triv2} holds true. Then there exists $\alpha>0$ so that 
$$\alpha\varphi_{p,\eps}(x)\to \alpha\varphi_1(x)<\frac{1}{2} \quad \text{ a.e.  in } \quad B_{R_0}.$$

Now thanks to \eqref{bcv-eq-clai-no-triv1}, we can now adapt the proof  the proof of (iii) of Lemma \ref{bcv-lem-esti1} to get for $\eps$ small, says $\eps \le \eps_1$,

\begin{align}
u_\eps(x) \ge -\frac{\alpha}{32} \lambda_1\left(\frac{D_2(J)}{2N}\Delta + a(x) \right)\varphi_{p,\eps}(x) \quad \text{ a.e.  in }\quad B_{R_0}, \label{bcv-eq-clai-no-triv3}
\end{align} 
which combined with \eqref{bcv-eq-clai-no-triv2} enforces 
$$u_\eps(x) \ge \gamma \varphi_{1}(x) \quad \text{ a.e.  in }\quad B_{R_0},\quad \text{ for all }\quad \eps \le \eps_2,$$
for some $\gamma,\eps_2>0$. Since $\varphi_1>0$ in $B_{R_0}$, the claim holds true in any smaller ball $B_R$. 

To prove \eqref{bcv-eq-clai-no-triv2}, let us normalise $\varphi_{p,\eps}$ by $\nlp{\varphi_{p,\eps}}{2}{B_{R_0}}=1$. Let $k_\eps$ be the function defined by
$$k_\eps(x):=\frac{1}{\eps^2}\int_{\R^N\setminus B_{R_0}}J_\eps(x-y)\,dy.$$
 Multiplying by $\varphi_{p,\eps}$ the equation satisfied by $\varphi_{p,\eps}$ and integrating  over $B_{R_0}$ yields  
\begin{align*}
\frac{D_2(J)}{2}\iint_{B_{R_0}\times B_{R_0}}\rho_\eps(x-y)\frac{|\varphi_{p,\eps}(y)-\varphi_{p,\eps}(x)|^2}{|x-y|^2}\,dxdy&=\int_{B_{R_0}}(a_\sigma(x)+\lambda_{p,\eps})\varphi_{p,\eps}^2(x)\,dx -\int_{B_{R_0}}k_{\eps}(x)\varphi_{p,\eps}^2(x)\,dx\\
 &\le C.
 \end{align*} 
Therefore by the characterisation of Sobolev space \cite{Ponce2004,Ponce2004a}, along a sequence we have $\varphi_{p,\eps}\to \psi$ in $L^2(B_{R_0})$ with $\nlp{\psi}{2}{B_{R_0}}=1$. Moreover by extending $\varphi_{p,\eps}$ and $\varphi$ by $0$ outside $B_{R_0}$ and by arguing as above for any $\varphi \in C^2_c(B_{R_0})$ we have

\begin{equation*}
\begin{split}\frac{D_2(J)}{2}\iint_{B_{R_0}\times \R^N}\frac{\rho_\eps(z)}{|z|^2}\varphi_{p,\eps}(x)[\varphi(x+z)-2\varphi(x)+\varphi(x-z)]\,dxdz = -\int_{B_{R_0}}\varphi(x)\varphi_{p,\eps}(a(x)+\lambda_{p,\eps})\,dx \\+\int_{B_{R_0}}k_\eps(x)\varphi_{p,\eps}(x)\varphi(x)\,dx. \end{split}
\end{equation*} 

Since $\varphi \in C_c^2(B_{R_0})$ we get  for $\eps$ small enough  $supp(k_\eps)\cap supp(\varphi)=\emptyset$. Thus passing to the limit  along a sequence in the above equation yields
 \begin{equation}\label{bcv-eq-clai-no-triv4}
\frac{D_2(J)K_{2,N}}{2}\int_{B_{R_0}}\psi(x)\Delta\varphi(x)\,dx +\int_{B_{R_0}}\varphi(x)\psi(x)(a(x)+\lambda_1)\,dx =0.
\end{equation}

The relation \eqref{bcv-eq-clai-no-triv4} being true for any $\varphi$, it follows that  $\psi$ is the smooth positive eigenfunction associated to $\lambda_1$ normalised by $\nlp{\psi}{2}{B_{R_0}}=1$. $\psi$ being uniquely defined, we get   $\varphi_{p,\eps}\to \psi$ in $L^2(B_{R_0})$ when $\eps \to 0$.
Thus along any sequence $\varphi_{p,\eps}(x)\to \varphi_{1}(x) $ almost everywhere in $B_{R_0}$.

\end{proof}

 \section{Extension to non-compactly supported kernels} 
 \label{bcv-section-ext}
 In this section, we discuss the extension of our persistence criteria to more general dispersal kernel $J$ and prove Theorem \ref{bcv-thm2}. Observe that the construction of positive solution only required that $\lambda_p(\lb{R}+\beta(x))<0$ for some $R$, regardless of what the dispersal kernel $J$ is. Therefore as soon as $ \lim_{R\to \infty} \lambda_p(\lb{R}+\beta(x))<0$ there exists a positive solution to \eqref{bcv-eq} with no restriction on the decay of the kernel.  Similarly, when $ \lambda_p(\M+\beta(x))>0$ the proof of the  non-existence of positive bounded solution essentially  relies on the inequality between $\lambda_p(\M+\beta(x))$ and $\lambda_p'(\M+\beta(x))$ which holds for quite general kernels including those satisfying the assumption $H5$ as proved in \cite{Berestycki2014}.
 Concerning the proof of the uniqueness of the positive solution, it relies on the construction of an integrable uniform super-solution of \eqref{bcv-eq} which guarantes the existence of a positive $L^1$ solution to \eqref{bcv-eq}. 
 Such super-solution still exists for kernels $J$ that satisfies the decay assumption $H5$. Indeed, we can show
 
 \begin{lemma} Assume that $J$ satisfies $H5$ and there exists a periodic function $\mu(x):\R^N\to\R$ such that 
$$\limsup_{|x|\to\infty}(\beta(x)-\mu(x))\leq 0\quad\quad\textrm{and}\quad\quad \lambda_p(\M +\mu(x))>0.$$ 
Then there exists $\bar u \in C_0(\R^N)\cap L^1(\R^N)$, $\bar u>0$ so that  $\bar u$ is a super-solution to \eqref{bcv-eq}. 
\end{lemma}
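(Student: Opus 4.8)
The plan is to adapt the construction of Lemma~\ref{bcv-lem-unif-supersol}, but to replace its exponentially decaying outer profile $Ce^{-\alpha|x|}$ — which is useless here since $\int_{\R^N}J(z)e^{\alpha|z|}\,dz$ may be $+\infty$ for every $\alpha>0$ — by an \emph{algebraically} decaying one, built around the periodic principal eigenfunction of $\M+\mu$. Since $\mu$ is periodic and $\lambda_1:=\lambda_p(\M+\mu)>0$, the generalised principal eigenvalue coincides with the periodic one and is attained (cf.\ \cite{Berestycki2014,Coville2013}): there is a positive periodic $\varphi_p\in C(\R^N)$ with $\M[\varphi_p]+\mu\varphi_p+\lambda_1\varphi_p=0$. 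By periodicity and continuity, $0<c_1\le\varphi_p\le c_2$ for suitable constants, whence $\M[\varphi_p]+\mu\varphi_p=-\lambda_1\varphi_p\le-\lambda_1 c_1<0$ in $\R^N$.

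\emph{Outer profile.} Fix $\delta\in(0,1)$ and set $g(r):=(1+r)^{-N-\delta}$, so that $g$ is smooth and decreasing, $0<g\le1$, $|g'|$ is bounded, and $w(x):=\varphi_p(x)\,g(|x|)$ lies in $C_0(\R^N)\cap L^1(\R^N)$ (the exponent $>N$ gives integrability). The elementary identity
$$\M[w](x)=g(|x|)\,\M[\varphi_p](x)+R(x),\qquad R(x):=\int_{\R^N}J(x-y)\,\varphi_p(y)\,[\,g(|y|)-g(|x|)\,]\,dy,$$
reduces everything to proving $R(x)=o\big(g(|x|)\big)$ as $|x|\to\infty$. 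Splitting the integral at $|x-y|=|x|/2$: on $\{|x-y|\le|x|/2\}$ a first-order Taylor bound for $g$ together with $\int_{\R^N}J(z)|z|\,dz<\infty$ (a consequence of $H5$) yields a contribution $O\big(|x|^{-1}g(|x|)\big)$; on $\{|x-y|>|x|/2\}$ one uses $|g(|y|)-g(|x|)|\le1$ together with Chebyshev's inequality and $H5$, namely $\int_{|z|>|x|/2}J(z)\,dz\le C|x|^{-(N+1)}$, which is $o\big(g(|x|)\big)$ precisely because $\delta<1$. Hence $\M[w]+\mu w=g\big(\M[\varphi_p]+\mu\varphi_p\big)+R\le-\lambda_1\varphi_p g+o(g)$. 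Combining this with $\limsup_{|x|\to\infty}(\beta-\mu)\le0$ and with $f(x,s)\le\beta(x)s$ for $s>0$ (from (H3)), we obtain $R_0>0$ and $c_0>0$ with
$$\M[w](x)+f(x,w(x))\ \le\ \M[w](x)+\beta(x)w(x)\ \le\ -c_0\,g(|x|)\ <\ 0\qquad\text{for all }|x|\ge R_0.$$

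\emph{Inner cap and patching.} Put $\Theta:=1+\sup_{\R^N}S$ and define $\bar u(x):=\min\{\Theta,\,C\varphi_p(x)g(|x|)\}$, with $C>0$ so large that $Cc_1g(R_0)\ge\Theta$; then $\bar u\equiv\Theta$ on $\bar B_{R_0}(0)$, $\bar u$ is continuous and positive, and $0<\bar u\le Cc_2\,g\in C_0(\R^N)\cap L^1(\R^N)$, so $\bar u\in C_0(\R^N)\cap L^1(\R^N)$. At any $x$ with $\bar u(x)=\Theta$: since $\bar u\le\Theta$ and $\int_{\R^N}J(z)\,dz=1$ we get $\M[\bar u](x)\le0$, while $f(x,\Theta)\le0$ because $\Theta\ge S(x)$ and $f$ is of KPP type; hence $\M[\bar u]+f(x,\bar u)\le0$. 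At the remaining points $\bar u(x)=C\varphi_p(x)g(|x|)<\Theta$, which forces $|x|>R_0$; since $\bar u\le C\varphi_p g$ pointwise and equals it at $x$, truncation can only lower the nonlocal term, $\M[\bar u](x)\le C\,\M[w](x)$, and $f(x,\bar u)\le\beta(x)\bar u=C\beta(x)w$, so the outer estimate gives $\M[\bar u]+f(x,\bar u)\le C\big(\M[w]+\beta w\big)<0$. Thus $\bar u$ is the required super-solution.

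The crux is the outer estimate: the exponential ansatz of the compactly supported case breaks down for fat tails, and one must produce a polynomial weight that is at once integrable (exponent $>N$) and a strict super-solution of the linearised operator $\M+\mu$ on $\{|x|\ge R_0\}$, using only the $(N{+}1)$-st moment bound $H5$ — this is exactly what pins the admissible exponents to the window $(N,N+1)$, i.e.\ $\delta\in(0,1)$, and explains the role of $H5$. A secondary, essentially bookkeeping difficulty is that $\varphi_p$ is merely bounded above and below and not monotone, so it cannot be glued to a radial cut-off directly; the choice $\bar u=\min\{\Theta,C\varphi_p g\}$ circumvents this, since capping by a constant only decreases $\M[\bar u]$ on the region where the minimum is the smooth branch.
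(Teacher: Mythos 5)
Your proof is correct and takes essentially the same route as the paper: a positive periodic test function for $\M+\mu$ multiplied by an algebraically decaying weight whose admissible exponents are pinned by the $(N{+}1)$-moment bound (H5), then glued to a constant cap in a bounded core. The paper uses the weight $(1+\tau|x|^{N+1})^{-1}$ at the exact exponent $N+1$ and tunes the small parameter $\tau$ to beat the tail term, while you use $(1+|x|)^{-N-\delta}$ with $\delta\in(0,1)$ and let the gap $1-\delta$ do the tuning; both work. Your gluing via $\bar u=\min\{\Theta,C\varphi_p g\}$ is a clean alternative to the paper's explicit piecewise definition on $\O_\kappa$ and its complement, and the observation that capping by a constant only decreases $\M[\bar u]$ where the smooth branch is active is exactly the right point. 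One small caution: you assert the existence of an exact periodic eigenfunction $\varphi_p$ with $\M[\varphi_p]+\mu\varphi_p+\lambda_1\varphi_p=0$, which for nonlocal operators with continuous coefficients need not hold (it generally requires an extra condition on $\mu$ such as $\frac{1}{\sup\mu-\mu}\notin L^1_{\mathrm{loc}}$). The paper avoids this by taking any $\lambda<\lambda_p(\M+\mu)$ and a positive periodic $\varphi$ with $\M[\varphi]+(\mu+\lambda)\varphi\le0$, which is all your argument actually uses; replacing your equality by this strict sub-solution inequality makes the proof airtight.
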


Observe that the construction of the super-solution thus covers  a  class of nonlinearities $f(x,u)$ larger than those that  satisfy $H4$.  As an  immediate consequence, the persistence criteria obtained in Theorem \ref{bcv-thm1}  still holds for any nonlinearity that satisfies:\\

\hbox{\textbf{(H7)}$\qquad $ There exists $\mu(x)\in C_p(\R^N)\;$ such that : $\qquad$
$ 
\begin{cases}
& \lambda_p(\M +\mu(x))>0,\\
&\limsup_{|x|\to\infty}\left(\frac{f(x,s)}{s}-\mu(x)\right)\leq 0 \quad \text{ uniformly in }s. 
\end{cases}
$
}
\medskip

 From an ecological point of view, such a nonlinearity allows one to consider a more complex niche structure for the species. Thus, we can consider ecological niches that are the superposition of a compact niche structure with  a periodic structure. Assume that in the (unbounded) periodic structure, there is extinction. Then, this framework allows us to discuss perturbation with compact support from the periodic structure and derive conditions for persistence.  The perspective offered by this approach are quite promising  and we believe that it may also  be applied to investigate  a climate change version of \eqref{bcv-eq}.

 \begin{proof}
 The construction of the super-solution in this situation follows the same general scheme as for a compactly supported kernel.
 By assumption since  $\limsup_{|x|\to\infty}(\beta(x)-\mu(x))\leq 0$, for any $\delta>0$ there exists $R_\delta>1$  such that
$$\beta(x)\leq \mu(x)+\delta\quad\quad\text{ for all $x$,}\quad |x|\geq R_\delta.$$

Fix $\delta<\lambda_p(\M+\mu(x))$ and  observe that by the definition of $\lambda_p(\M +\mu(x))$ there exists a constant $\delta<\lambda<\lambda_p(\M+\mu(x))$ and a positive periodic function $\varphi$ such that  
\begin{equation}\label{bcv-eq-phi-lambda}
\opm{\varphi}(x)+(\mu(x)+\lambda)\varphi(x)\leq 0\quad \text{ for all }\quad x\in\R^N.
\end{equation}

Let $w=C\frac{\varphi(x)}{1+\tau|x|^{N+1}}$ with $C,\tau$ to be chosen. 
\begin{align*} 
\opm{w}+(\mu(x)+\delta) w(x) &=C(1+\tau|x|^{N+1})^{-1}\left(\int_{\R^N}J(x-y) \frac{(1+\tau|x|^{N+1})}{(1+\tau|y|^{N+1})}\varphi(y)\,dy-\varphi(x)+(\mu(x)+\delta) \varphi(x) \right), \\
&\le C(1+\tau|x|^{N+1})^{-1}\left(\int_{\R^N}J(z)\left[\frac{(1+\tau|x|^{N+1})}{(1+\tau|x+z|^{N+1})}-1\right]\varphi(x+z)\,dz +(\delta-\lambda)\varphi(x)\right),\\
&\le C(1+\tau|x|^{N+1})^{-1}\left(\tau\int_{\R^N}J(z)\left[\frac{|x|^{N+1}-|x+z|^{N+1}}{(1+\tau|x+z|^{N+1})}\right]\varphi(x+z)\,dz +(\delta-\lambda)\varphi(x)\right),\\
&\le w(x)\left(\tau\int_{\R^N}J(z)\left[\frac{|x|^{N+1}-|x+z|^{N+1}}{(1+\tau|x+z|^{N+1})}\right]\frac{\varphi(x+z)}{\varphi(x)}\,dz +\delta-\lambda\right),
\end{align*}
where we use  \eqref{bcv-eq-phi-lambda} and $\inf_{\R^N}\varphi>0$.

Set $$h(\tau,x):=\tau\int_{\R^N}J(z) \left[\frac{|x|^{N+1}-|x+z|^{N+1}}{(1+\tau|x+z|^{N+1})}\right]\frac{\varphi(x+z)}{\varphi(x)}\,dz +\delta-\lambda.$$

Since $\varphi\in L^\infty(\R^N)$ and $\inf_{\R^N}\varphi>0$,  there exists a positive constant $C_0$ such that   
$$\frac{\varphi(x+z)}{\varphi(x)}\le C_0\quad \text{ for all }\quad x,z\in\R^N.$$
For all $x \in \R^N$, we have 
\begin{equation}
h(\tau,x)\le C_0\tau\int_{\R^N}J(z)\left[\frac{|x|^{N+1}-|x+z|^{N+1}}{(1+\tau|x+z|^{N+1})}\right]\,dz +\delta-\lambda.\label{bcv-eq-gen-0}
\end{equation}

Let  $$I:=C_0\tau\int_{\R^N}J(z)\left[\frac{|x|^{N+1}-|x+z|^{N+1}}{(1+\tau|x+z|^{N+1})}\right]\,dz,$$  then we have 

$$I=C_0\tau\int_{\{|x|\le 2|z|\}}J(z)\left[\frac{|x|^{N+1}-|x+z|^{N+1}}{(1+\tau|x+z|^{N+1})}\right]\,dz +C_0\tau\int_{\{|x|> 2|z|\}}J(z)\left[\frac{|x|^{N+1}-|x+z|^{N+1}}{(1+\tau|x+z|^{N+1})}\right]\,dz.$$

 Let us estimate the first  integral. Since $|x|\le 2|z|$ we have
   
\begin{equation}
C_0\tau\int_{\{|x|\le 2|z|\}}J(z)\left[\frac{|x|^{N+1}-|x+z|^{N+1}}{(1+\tau|x+z|^{N+1})}\right]\,dz\le C_0\tau 2^{N+1}\int_{\R^N}J(z)|z|^{N+1}\,dz. \label{bcv-eq-gen-1}
\end{equation}

Let us now estimate the second term. Since  $|x+z|^{N+1}\ge (|x|-|z|)^{N+1}$, we have 
\begin{align*}
C_0\tau\int_{\{|x|> 2|z|\}}J(z)\left[\frac{|x|^{N+1}-|x+z|^{N+1}}{(1+\tau|x+z|^{N+1})}\right]\,dz&\le C_0 \sum_{i=1}^{N+1}\binom{N+1}{i} \int_{\{|x|> 2|z|\}}J(z)(-1)^{i+1}|z|^{i}\left[\frac{\tau|x|^{N+1-i}}{(1+\tau|x+z|^{N+1})}\right]\,dz, \\
&\le C_0 \sum_{i=1}^{N+1}\binom{N+1}{i} \int_{\{|x|>2|z|\}}J(z)|z|^{i}\left[\frac{\tau|x|^{N+1-i}}{(1+\tau|x+z|^{N+1})}\right]\,dz.
\end{align*}

Since $|x|>2|z|$, we have $$\frac{1}{1+\tau|x+z|^{N+1}}\le \frac{2^{N+1}}{2^{N+1}+\tau|x|^{N+1}}$$ and for $|x|\ge R_0>1$ 
\begin{align*}
C_0\tau\int_{\{|x|> 2|z|\}}J(z)\left[\frac{|x|^{N+1}-|x+z|^{N+1}}{(1+\tau|x+z|^{N+1})}\right]\,dz&\le C_0 2^{N+1} \sum_{i=1}^{N+1}\binom{N+1}{i} \int_{\{|x|> 2|z|\}}J(z)\frac{|z|^{i}}{|x|^i}\left[\frac{\tau|x|^{N+1}}{(2^{N+1}+\tau|x|^{N+1})}\right]\,dz\\
&\le \frac{C_0 2^{N+1}}{R_0} \sum_{i=1}^{N+1}\binom{N+1}{i} \int_{\R^N}J(z)|z|^{i}\left[\frac{\tau|x|^{N+1}}{2^{N+1}+\tau|x|^{N+1}}\right]\,dz.
\end{align*}
Since for all $|x|,$  $$\left[\frac{\tau|x|^{N+1}}{2^{N+1}+\tau|x|^{N+1}}\right]<1,$$
we get for $|x|\ge R_0$
\begin{equation}\label{bcv-eq-gen-2}
C_0\tau\int_{\{|x|> 2|z|\}}J(z)\left[\frac{|x|^{N+1}-|x+z|^{N+1}}{(1+\tau|x+z|^{N+1})}\right]\,dz\le \frac{C_0 2^{N+1}}{R_0} \sum_{i=1}^{N+1}\binom{N+1}{i} \int_{\R^N}J(z)|z|^{i}\,dz.
\end{equation}

Combining \eqref{bcv-eq-gen-1}, \eqref{bcv-eq-gen-2} and \eqref{bcv-eq-gen-0}, we get for $|x|>R_0$

$$h(x,\tau)\le \frac{C_0 2^{N+1}}{R_0} \sum_{i=1}^{N+1}\binom{N+1}{i} \int_{\R^N}J(z)|z|^{i}\,dz+  C_0\tau 2^{N+1}\int_{\R^N}J(z)|z|^{N+1}\,dz+\delta -\lambda.$$
 Thanks to (H5),  for $\tau$ small enough, says $\tau \le \tau_1$ and $R_0$ large enough we derive
 $h(x,\tau)\le  \frac{\delta -\lambda}{2} <0$,

Hence,  for all $\tau \le \tau_1$, we have 
 
 \begin{equation}\label{bcv-eq-supersol-mu}
 \opm{w}+(\mu(x)+\delta) w(x)\le w(x)h(x,\tau)\le w(x)\frac{\delta -\lambda}{2}<0\quad \text{for all } \quad x\in \R^N\setminus B_{R_0}.  
\end{equation}  
 Fix now $\tau\le \tau_1$ and fix $R_0>R_\delta$ so that $h(x,\tau)<0$ in $\R^N\setminus B_{R_0}(0)$. Let $\kappa_0:=\sup_{\R^N\setminus B_{R_0}(0)} \frac{\varphi(x)}{1+\tau|x|^{N+\alpha}}$
Let $0<\kappa<\kappa_0$ and consider the set $$\O_\kappa:=\left\{x\in \R^N\, |\, \frac{\varphi(x)}{1+\tau|x|^{N+\alpha}} \le \kappa\right\}.$$
By construction, since $\varphi>0$ in $\R^N$, we can choose $\kappa$ small so that $$\O_\kappa\subset \R^N\setminus B_{R_0}(0).$$
Moreover, $\R^N\setminus \O_\kappa$ is a bounded domain and  $M:=\sup_{\R^N\setminus \O_\kappa}S(x)$ is well defined. Choose now  $C$ such that $C=\frac{2M}{\kappa}$ and consider the continuous function 
$$\bar u (x):= \begin{cases} C\frac{\varphi(x)}{1+\tau|x|^{N+\alpha}} \quad \text{ in } \quad \O_\kappa,\\
C\kappa \quad \text{ in } \quad \R^N\setminus\O_\kappa.
 \end{cases}
 $$

We can check that $\bar u$ is a super-solution to \eqref{bcv-eq}. Indeed, for any $x \in  \R^N\setminus \O_\kappa$, we have $\bar u =C\kappa=2M> \sup_{\R^N\setminus \O_\kappa} S(x)$  which implies that  $f(x,C\kappa)\le 0$ and  
$$\opm{\bar u}(x) +f(x,\bar u(x))\le \int_{\R^N}J(x-y)\bar u(y)\,dy -C\kappa +f(x,C\kappa)\le f(x,C\kappa)\le 0.$$
On the other hand, for  $x\in \O_\kappa$ we directly have 
\begin{align*}
 \opm{\bar u}(x) +f(x,\bar u(x))\le \opm{\bar u}(x) +\beta(x)w(x) &\le \opm{w}+(\mu(x)+\delta)w(x),\\
 & \le h(x,\tau) w(x)\le 0. 
 \end{align*}
 This completes the proof.
 \end{proof}

\section{Conclusion}

In this paper, we obtain an optimal persistence criteria for a population that has a long range dispersal. The dynamics of the population is described by a reaction dispersion equation with a convolution for dispersal and a Fisher-KPP type nonlinearity  that describes the reproduction and mortality of the population. The model reads :
$$\frac{\partial u}{\partial t}(t,x)=\opm{u} +f(x,u(t,x)),$$
with $\opm{u}=J\star u - u$.

We consider here the case of a bounded ecological niche, that is  when the environment is lethal to the population outside a bounded region.  In our model, this fact  is translated by an assumption on the   Fisher-KPP  nonlinearity, th
This fact is translated in our model  by assuming  that the Fisher -KPP nonlinearity $f$ satisfies:

   $$\limsup_{|x|\to \infty} \frac{f(x,s)}{s}<0,\quad \text{ uniformly in } \quad s\ge 0.$$


When the dispersal kernel is compactly supported,  we prove that the existence of a  positive solution of the above equation is characterised by the sign of the generalised principal eigenvalue $\lambda_p(\m + \partial_sf(x,0))$ defined by 
$$\lambda_p(\m+\partial_sf(x,0)):=\sup \{\lambda \in \R \, | \exists \, \varphi \in C(\R^N), \varphi>0, \text{ such that }\, \opm{\varphi}+\partial_sf(x,0)\varphi+\lambda\varphi \le 0 \}.$$

Moreover, when such a positive stationary solution exists, it is unique. In addition, we describe completely the long time behaviour of positive solution of the above nonlocal Fisher-KPP equation.  We also obtain persistence criteria for fat-tailed kernel, in terms of the sign of $\lim_{R\to\infty}\lambda_p(\lb{R}+\partial_sf(x,0))$. However, due to the lack of Harnack type \textit{ a priori } estimate, the optimality of this  criteria  is still an open problem.  A better understanding of the properties of the generalised principal eigenvalue $\lambda_p$ in such context would allow one to resolve this issue. In particular, the optimality of this criteria would follow from  proving that $$\lim_{R\to\infty}\lambda_p(\lb{R}+\partial_sf(x,0))=\lambda_p(\m+\partial_sf(x,0)).$$   

In the context of compactly supported kernel, we also analyse the effect of the range of dispersal  on the persistence of a species for some rescaled dispersal kernels e. g. $\frac{1}{\eps^m}J_\eps(z)$.  These rescaled kernels arise, when the dispersal is conditioned by a \textit{ dispersal budget} in which the cost functions are of the form $|z|^m$. For $0\le m <2$,  we prove that small spreaders, i.e.   $\eps$ small, always survive. This is not necessarily true when $m=2$. In that case, it may happen that having a small dispersal   leads to extinction. Conversely, when $m>0$, we prove that large spreaders, i.e  $\eps$ large, always survive.             
We also provide the asymptotics of the solution of the associated nonlocal Fisher-KPP equation. These asymptotics provide valuable informations, when we try to compare  the different dispersal strategies and for the search of a Evolutionary Stable Strategy. 

Many new open problems and new directions come up naturally as the continuation of the present study. For instance, to clarify the effect of the dispersal budget on the dispersal strategies, we would need a deeper understanding of the generalised principal eigenvalue. We suspect that for a quadratic cost function, at least in some situation, the effect of the dispersal budget should be the opposite of the one usually observed for unconditional dispersal strategies.  That is, the larger \textit{ spreader } should be always favoured.


\section*{Acknowledgements}

The research leading to these results has received funding from the European Research Council
under the European Union's Seventh Framework Programme (FP/2007-2013) / ERC Grant
Agreement n°321186 : "Reaction-Diffusion Equations, Propagation and Modelling" held by Henri Berestycki.
J. Coville acknowledges support from the “ANR JCJC” project MODEVOL: ANR-13-JS01-0009. 
\section*{}
\bibliographystyle{amsplain}
\bibliography{bcv.bib}

\end{document}